\documentclass[11pt]{amsart}
\usepackage[T1]{fontenc}
\usepackage[utf8]{inputenc}
\usepackage[english]{babel}
\usepackage{amsmath,amssymb,amsthm,mathtools,mathrsfs}
\usepackage{microtype}
\usepackage{enumitem}
\usepackage{xurl}
\usepackage{xcolor}
\usepackage[colorlinks=true,linkcolor=blue,citecolor=blue,urlcolor=blue]{hyperref}
\mathtoolsset{showonlyrefs}
\numberwithin{equation}{section}

\newtheorem{theorem}{Theorem}[section]
\newtheorem{lemma}[theorem]{Lemma}
\newtheorem{proposition}[theorem]{Proposition}
\newtheorem{corollary}[theorem]{Corollary}
\newtheorem{theoremA}{Theorem}

\theoremstyle{definition}
\newtheorem{definition}[theorem]{Definition}

\newtheoremstyle{remarkbold}
  {3pt}{3pt}
  {\normalfont}
  {}
  {\bfseries}
  {.}
  {.5em}
  {}
\theoremstyle{remarkbold}
\newtheorem{remark}[theorem]{Remark}

\newcommand{\Prob}{\operatorname{Prob}}
\newcommand{\mdim}{\operatorname{mdim}}
\newcommand{\rank}{\operatorname{rank}}
\newcommand{\supp}{\operatorname{supp}}
\newcommand{\mult}{\operatorname{mult}}

\newcommand{\1}{\mathbf 1}
\def\ord{\mathrm{ord}}\def\rLocT{\operatorname{Loc}_{\mathcal D}^{T}}
\def\mesh{\mathrm{mesh}}
\def\rPG{\mathcal P_G}\def\rLocD{\mathrm{loc}_{\mathcal D}}\def\rOrdD{\mathrm{ord}_{\mathcal D}}%
\def\hps{\bigl((\widetilde{O_j})_{j=1}^J, (D_j)_{j=1}^J, \boldsymbol{\psi}\bigr)} \def\rDdim{\mathrm{ddim}}\def\bfpsi{\boldsymbol \psi}
\title[Generic Sharp Shift Embeddings for Amenable Group Actions]{Generic Sharp Shift Embedding for Amenable Group Actions with the Uniform Rokhlin Property and Comparison}
\author{Yonatan Gutman and Chunlin Liu}

\address{\vskip 2pt \hskip -12pt Yonatan Gutman}

\address{\hskip -12pt Institute of Mathematics, Polish Academy of Sciences, ul. Śniadeckich 8, 00-656 Warszawa, Poland}

\email{gutman@impan.pl}

\medskip
\address{\vskip 2pt \hskip -12pt Chunlin Liu}

\address{\hskip -12pt School of Mathematical Sciences, Dalian University of Technology, Dalian, 116024, P.R. China and Institute of Mathematics, Polish Academy of Sciences, ul. Śniadeckich 8, 00-656 Warszawa, Poland}

\email{chunlinliu@mail.ustc.edu.cn}
\date{}
\keywords{mean dimension, amenable group action, uniform Rokhlin property, comparison, cubical shift, generic embedding}
\thanks{Y.G. was partially supported by the National Science Centre (Poland) grant 2020/39/B/ST1/02329.
C.L. was supported by Liaoning Provincial Natural Science Foundation
 Doctoral Research Start-up Project (Project No.~2026-BS-0032), the
Postdoctoral Fellowship Program and China Postdoctoral Science Foundation under Grant Number BX20250067, and the China Postdoctoral Science Foundation under Grant Number 2025M773074.}
\begin{document}

\begin{abstract}
Naryshkin recently proved that a free action of a countably infinite
discrete amenable group $G$ on a compact metrizable space $X$ with
the uniform Rokhlin property and comparison (URPC) admits an
equivariant embedding into $([0,1]^m)^G$ whenever
$\mdim(X,G)<m/2$.
We strengthen this result from existence to genericity by a
substantially different method: the set of observables
$f\in C(X,[0,1]^m)$ whose orbit maps are equivariant embeddings
into $([0,1]^m)^G$ is a dense $G_\delta$ subset of
$C(X,[0,1]^m)$.
The proof combines an adaptation of the Kakutani skyscraper
construction used by Gutman--Tsukamoto with a new polynomial
perturbation method, departing from the predominantly linear
perturbation approaches used in earlier equivariant embedding literature.
\end{abstract}
\maketitle

\section{Introduction}

A fundamental research endeavor in the theory of topological dynamical systems originated in  Auslander's famous 1988 question \cite{auslander1988} 
: \textit{Does every minimal homeomorphism of a compact
metrizable space admit an equivariant embedding into the shift on
$[0,1]^{\mathbb Z}$?}  Mean dimension supplied the decisive obstruction. Introduced by Gromov
\cite{gromov1999} in 1999 and developed systematically by Lindenstrauss and Weiss
\cite{lindenstraussweiss2000}, it measures the average amount of
topological dimension per iterate. The shift on $([0,1]^m)^{\mathbb Z}$
has mean dimension $m$, and mean dimension is monotone under equivariant
embeddings. Lindenstrauss and Weiss constructed a minimal system of mean
dimension greater than one, thereby answering Auslander's question
negatively. Lindenstrauss then established a striking partial converse:
a minimal system with $\mdim(X,T)<\frac{m}{36}$ \textit{generically}\footnote{ Genericity means that the set of
observables $f\in C(X,[0,1]^m)$ for which the orbit map
$x\mapsto(f(T^n x))_{n\in\mathbb Z}$ is an embedding
is a dense $G_\delta$ subset of $C(X,[0,1]^m)$,
equipped with the uniform topology.} embeds equivariantly into the $m$-cubical
shift \cite{lindenstrauss1999}. In the same paper, he asked for the largest
constant $c$ for which the bound $\mdim(X,T)<cm$ guarantees an
equivariant embedding. Gutman and Tsukamoto \cite{gutmantsukamoto2020} solved the problem by proving the existence\footnote{Unlike the result of \cite{lindenstrauss1999}, the result in \cite{gutmantsukamoto2020}  is an existence result, not establishing genericity. The generic embedding result for minimal $\mathbb{Z}$-actions was first proven in \cite{levin2023finite}.} of an equivariant embedding for any $c<\frac{1}{2}$. As Lindenstrauss and Tsukamoto \cite{lindenstrausstsukamoto2014} had previously constructed
minimal systems of mean dimension exactly $m/2$ not embeddable into
the $m$-cubical shift, the constant $1/2$, and the strict inequality, were shown to be optimal.

 The marker property introduced in \cite{gutman2015} based on related notions of \cite{downarowicz2006} and \cite{krieger1982} is  a natural generalization of minimality in the context of the embedding problem.
For a group action $G\curvearrowright X$, it requires that for every
finite $F\subset G$ there be an open set $U$ whose translates
$gU$, $g\in F$, are pairwise disjoint and whose full family of
translates covers $X$. Every free minimal action has this property. Following the development of dynamical tiling methods in
\cite{gutmanlindenstrausstsukamoto2016}, Gutman, Qiao, and Tsukamoto
proved the sharp embedding theorem for $\mathbb Z^k$-actions with the
marker property \cite{gutmanqiaotsukamoto2019}.

In view of the fundamental role of the Rokhlin lemma in coding
arguments in ergodic theory (\cite{K70,ornstein1970bernoulli,O74}), the uniform Rokhlin property (URP) introduced by Niu \cite{niu2022} in his study
of crossed-product $C^*$-algebras associated with free minimal
actions of amenable groups\footnote{This is one of many manifestations of the interaction
between mean dimension theory and the structure and classification
of $C^*$-algebras; see, for example,
\cite{kerr2020,kerrszabo2020,niu2022} and the discussion in
\cite{naryshkin2024}.},  offers a particularly natural setting for the embedding problem\footnote{There have, however, also been deep recent developments
for systems beyond this class; see
\cite{dranishnikovlevin2025,meyerovitch2026,shi2026sharp}.}. It stipulates the existence of finite families of towers with arbitrarily invariant shapes and mutually disjoint levels, whose union covers all but an arbitrarily small part of the space, uniformly over all invariant probability measures. Naryshkin proved
that a free action of a countably infinite discrete amenable group $G$
with URP and  \textit{comparison}\footnote{Essentially, comparison allows sets that are smaller in every invariant measure to be moved
piece by piece into larger ones by group translates, see Definition \ref{def:comparison}.} (URPC), satisfying $\mdim(X,G)<m/2$, embeds into $([0,1]^m)^G$
\cite[Theorem~A]{naryshkin2024}. Note that for free abelian-group actions,
URPC is equivalent to the marker property (\cite[Corollary~E]{naryshkin2024}). However, the question whether general amenable free minimal actions
satisfy URPC remains open.\footnote{Very recently, it was shown in
\cite{GlasnerLiu2026} that every free action of a countably infinite
amenable group on a finite-dimensional compact metrizable space has
URPC.}

In his proof of the embedding theorem, Naryshkin used an ingenious
construction that encodes an entire sequence of URPC covers in
a single continuous observable \cite[Lemmas~6.6--6.7]{naryshkin2024}. The Baire-category argument
is then carried out in a closed proper  subclass of observables constrained by that
function.
The passage from existence to genericity requires one to perform the Baire-category argument in the \textit{entire} observable space.  For
$f\in C(X,[0,1]^m)$, denote its \textit{orbit map} by
$$
f^G:X\to([0,1]^m)^G,
 \qquad f^G(x)_g=f(gx),\quad g\in G.
$$
We state our main theorem.

\begin{theoremA}\label{thm:main}
Let a countable\footnote{If $G$ is finite, the conclusion holds assuming only freeness
by
\cite[Theorem~1.1]{gutmanlevinmeyerovitch2025}.
Henceforth, we assume that $G$ is infinite.} discrete amenable group $G$ act freely by
homeomorphisms on a nonempty compact metrizable space $X$, and assume that
the action has URPC. If
$
 \mdim(X,G)<\frac m2,
$
then
$$
 \mathcal E_m(X,G)
 :=
 \bigl\{
 f\in C(X,[0,1]^m):
 f^G\text{ is a topological embedding}
 \bigr\}
$$
is a dense $G_\delta$ subset of $C(X,[0,1]^m)$.
\end{theoremA}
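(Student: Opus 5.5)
We follow the standard Baire-category scheme. Fix a compatible metric $d$ on $X$. For $\eta>0$ set
$$
\mathcal E_\eta:=\bigl\{f\in C(X,[0,1]^m):\ f^G(x)\neq f^G(y)\text{ whenever } d(x,y)\ge\eta\bigr\}.
$$
Since $([0,1]^m)^G$ carries the product topology, a compactness argument shows that each $\mathcal E_\eta$ is open in the uniform topology. Indeed, if $f\in\mathcal E_\eta$, then there are a finite $F\subset G$ and $\delta>0$ such that
$$
\max_{g\in F}\|f(gx)-f(gy)\|>\delta \quad\text{whenever } d(x,y)\ge\eta,
$$
and this inequality survives any perturbation of $f$ of size less than $\delta/2$. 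We also have $\mathcal E_m(X,G)=\bigcap_{n}\mathcal E_{1/n}$. The whole theorem therefore reduces to showing that each $\mathcal E_\eta$ is dense. Concretely: given $f\in C(X,[0,1]^m)$, $\varepsilon>0$ and $\eta>0$, we must produce $f'$ with $\|f-f'\|_\infty<\varepsilon$ and $f'\in\mathcal E_\eta$.

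For density we adapt the Gutman--Tsukamoto Kakutani skyscraper argument, with URPC replacing the marker property.

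\textbf{Step 1 (discretisation).} Since $\mdim(X,G)<m/2$, choose a Følner set $F$ and an open cover $\alpha$ of mesh less than $\eta$. We arrange that $\alpha$ admits an $\alpha$-compatible map into a simplicial complex $P$ of dimension $<\tfrac m2|F|-$(margin). We then apply URP to obtain finitely many towers $(V_j,S_j)$, $j=1,\dots,J$. Their shapes $S_j$ are $(F,\delta)$-invariant, their levels $sV_j$ are pairwise disjoint, and the remainder $R=X\setminus\bigcup_j S_jV_j$ has $\mu(R)<\delta$ for all invariant $\mu$. The shapes are large enough that the local dimension $\dim P_j$ attached to the tower $j$ is less than $\tfrac m2(1-\delta')|S_j|$.

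\textbf{Step 2 (handling the remainder).} We use comparison to move $R$, piece by piece, into a small portion of the towers. The resulting tower levels reserved for $R$ carry ``free'' coordinates, whose values we may use to encode points of $R$.

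\textbf{Step 3 (perturbation).} On each tower we perturb $f$ by setting
$$
f'(sx)=f(sx)+\bigl(\text{polynomial in } \phi_j(x)\bigr),
$$
where $\phi_j$ is the map to $P_j$ smoothed by a partition of unity subordinate to the cover. We choose the polynomial coefficients generically. The points needing separation are pairs $x,y$ with $d(x,y)\ge\eta$ that fall in the same or in different towers. For such a pair, an algebraic-geometry dimension count applies: the bad set of coefficients has codimension at least $m|S_j|-2\dim P_j>0$. Because of this, generic polynomial coefficients ensure injectivity of the combined vector $(f'(sx))_{s\in S_j}$, up to identification through $\alpha$.

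We use polynomials rather than linear perturbations for a specific reason. Different points can land in different towers, and at different positions within a tower, so we must separate pairs whose coordinates overlap only partially. Nonlinear (Vandermonde-type) terms make the relevant incidence conditions independent in this mixed setting. Continuity at tower boundaries is handled with cutoff functions, as in Gutman--Tsukamoto.

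\textbf{Main obstacle.} The crux is Step 3 for pairs $x,y$ whose orbits meet the towers in misaligned positions, or where one of them passes through the remainder region coded via comparison. There, the separation must come from coordinates contributed by different tower bases simultaneously. The genericity count then needs the total dimension of all the parameter simplices involved to be less than half the number of free coordinates. To achieve this, we first secure a uniform bound on how many towers an $F$-neighbourhood of a point meets, using the disjointness of levels and Følner invariance. Then we show that the polynomial perturbation family is a submersion onto the difference set away from the diagonal.
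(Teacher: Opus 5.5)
Your Baire-category reduction is correct. The compactness proof that $\mathcal E_\eta$ is open is a fine alternative to the paper's sequential argument. The density step, however, has a genuine gap, and it sits exactly at what you call the main obstacle: you name it but do not resolve it.

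\textbf{Misaligned pairs.} The count $m|S_j|-2\dim P_j>0$ is valid only for \emph{aligned} pairs, $x=su$, $y=sv$ with $u,v$ in the same base $V_j$. Nothing in Steps 1--3 forces a colliding pair to be aligned. For a misaligned pair, the shifted window of $y$ runs through copies $S_{j'}w$ of other towers, through the same tower at other positions, or through the remainder and the cutoff collars. Your perturbation at every position of a copy is a polynomial in the whole point $\phi_{j'}(w)\in P_{j'}$. So each copy met enters the configuration space with dimension up to about $\frac m2|S_{j'}|$, however few rows of the window it occupies. Meanwhile the system on the window $S_j$ has at most $m|S_j|$ independent scalar equations. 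Together with $\phi_j(u)$, two such copies of comparable size already give about $\frac{3m}{2}|S_j|$ configuration dimensions. A bound on how many towers a window meets does not repair this.

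\textbf{The submersion claim is false.} Take $h^2=e$, $h\ne e$, with $x=su$ and $y=hx$ in one copy. For $g$ with $gs,ghs\in S_j$, the difference rows indexed by $g$ and $gh$ are negatives of each other, including their right-hand sides. The linearization therefore has rank at most half the number of rows. Such degenerations (cycles of the shift, as in Lemma~\ref{lem:anchor-rank}) are unavoidable. They must be paid for by a correspondingly smaller configuration dimension. That rank-versus-dimension bookkeeping over all misaligned configurations is the missing content.

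\textbf{How the paper handles it.} It avoids doing this bookkeeping at the level of towers by first forcing alignment. Comparison and a $\mathcal Q_F$-separated marker set $U$ give routing pairs $K_\ell\subset O_\ell\Subset U$ (Proposition~\ref{lem:urpc-buffer-router}). A polynomial perturbation of the partition-of-unity observable $f_*$ then gives a localizer $q_0$ with $q_0^F(K_\ell)\cap q_0^F(X\setminus O_\ell)=\emptyset$ (Proposition~\ref{prop:finite-family-localizer}). This recognition property survives small perturbations, and it forces every collision into a common level $sV_i^+$ or a common remainder chart $U_j^+$. The genericity count is performed only for this localizer, where misalignment is rigid: $\mathcal Q_F$-separation leaves a single shift $h$ and a single index $k$ (Lemma~\ref{lem:anchor-synchronization}). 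The sites there are partition-of-unity values of controlled local order, so rank at least $\max\{r_\omega^\circ,|F|/2\}$ beats configuration dimension at most $2ar_\omega^\circ+2(J-1)b_F+1$. Only afterwards is the tower-by-tower encoding carried out, and there it is purely aligned (Proposition~\ref{prop:global-perturbation}).

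\textbf{Step 2 has the same defect.} Comparison does not create reserved free coordinates. The transported remainder pieces land in the bases with overlaps, at most one piece per colour at each point, and consume about $N_{\mathcal W}\alpha|S_i|$ of the encoding budget. Moreover, encoding $x\in R$ at $g_jx$ separates $x$ from $y$ only if $g_jy$ lies in the same base, which is again a localization statement.
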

In particular, the conclusion applies to free abelian-group actions
with the marker property, and hence to free minimal abelian-group
actions, under the stated mean-dimension bound. Sharpness as a uniform
theorem already follows from the minimal $\mathbb Z$-examples of
\cite{lindenstrausstsukamoto2014}.
 This also gives
a dynamical counterpart of the generic Menger--N\"obeling theorem:
for a compact metrizable space $K$ with $\dim K<m/2$, embeddings into
$[0,1]^m$ form a dense $G_\delta$ subset of $C(K,[0,1]^m)$
\cite{hurewiczwallman1941,engelking1978}.

Our proof differs significantly from Naryshkin's proof. Ultimately generic embedding is achieved through a novel method detailed in Section \ref{section:main}.
The key ingredient, proven in that section, is Proposition~\ref{prop:finite-family-localizer},
which we call \emph{finite-window localization up to a buffer}.
Its role can be described as follows. From a \textit{URPC witness} we
construct finitely many compact sets $K_\ell$ with open
neighborhoods $O_\ell$. Suitable translates of the $K_\ell$
cover $X$, while the corresponding translates of the $O_\ell$
are contained in tower levels or open sets covering the remainder.

Starting from a suitable partition-of-unity approximation $f_*$
of an arbitrary observable, we aim to perturb $f_*$ arbitrarily slightly
to $q$ so that, for a finite set $F\subset G$,
\begin{equation}\label{eq:intro-localization}
 q^F(K_\ell)\cap q^F(X\setminus O_\ell)=\emptyset,
 \qquad q^F(x):=\big(q(gx)\big)_{g\in F}.
\end{equation}
Thus, whenever $x\in K_\ell$, any point with the same observations
as $x$ on the finite window $F$ must belong to $O_\ell$.
Once this localization has been achieved, applying it to suitable
translates shows that points with identical full orbit observations
lie in a common tower level or in a common member of the open
cover of the remainder.
This allows the subsequent separation argument to be carried
out within these regions. Indeed, we use general-position perturbations of orbit blocks,
as in the \textit{Kakutani skyscraper decomposition} technique of Gutman--Tsukamoto   
\cite{gutmantsukamoto2014},
and combine the resulting block maps into a single continuous
observable by placing their coordinates along the corresponding
tower levels. The buffer between $K_\ell$ and
$X\setminus O_\ell$ makes the localization stable under
the additional (sufficiently small) perturbations. 

The main challenge is to achieve \eqref{eq:intro-localization} by an arbitrarily small perturbation. Departing from the predominantly linear perturbation approaches
used in earlier equivariant embedding arguments, we introduce \textit{polynomial
perturbations of partition-of-unity maps}. Indeed, if
$f_* =\sum_j v_j\psi_j$,  where $v_j\in(0,1)^m$ and
$\boldsymbol{\psi}=(\psi_1,\ldots,\psi_J)$ is a partition of unity, the perturbations have the form
$$
 q_\Theta(z)_i=f_*(z)_i+
 \langle\Xi(z),\theta^{(i)}\rangle,
 \qquad 1\leq i\leq m,
$$
where $\Xi$ consists of translated cutoffs and monomials of bounded
degree in finitely many translates of
$\boldsymbol{\psi}$.\footnote{Polynomial perturbations have been used in a non-equivariant Euclidean-space setting in the context of the \textit{Takens-type delay embedding theorems}, e.g. see \cite{SYC91,baranski2024prediction}.} The coefficients $\Theta$ may be
chosen arbitrarily close to zero. The proof is completed by a combination of combinatorial
rank estimates and semialgebraic dimension arguments.

\subsection*{Organization of the paper}
Section~\ref{sec:preliminaries} introduces notation and background material
used throughout the paper.
In Section~\ref{sec:global-coordinates}, we develop the
low-dimensional coordinate systems needed for the perturbation
argument.
Section~\ref{sec:global-approximation} establishes the finite-scale
approximation theorem and derives Theorem~\ref{thm:main} by a
Baire-category argument, assuming the \textit{finite-window localization up to a buffer}
result.
Finally, Section~\ref{sec:recognition-proof} proves this localization
result and completes the proof.

\section{Preliminaries}\label{sec:preliminaries}

Throughout the paper, $G$ is a countably infinite discrete amenable group acting on the left by homeomorphisms on  a
nonempty compact metrizable space $X$, and $e$ denotes the identity
of $G$. We fix $m\in\mathbb N$ and a compatible metric $\rho$ on
$X$. For subsets $A,B\subset X$, we write
$
 A\Subset B
$
if
$
 \overline{A}\subset B.
$ All inclusions are understood to allow equality.

We write $\rPG(X)$ for the space of $G$-invariant Borel
probability measures on $X$. We equip $\mathbb R^m$ and its finite products with the maximum norm,
denoted by $\|\cdot\|$, and function spaces with the corresponding
uniform norm $\|\cdot\|_\infty$.

\subsection{Basic notation}
For $q\in C(X,[0,1]^m)$ and any subset $D\subset G$, finite or infinite, write
$q^D(x):=(q(gx))_{g\in D}$.
Every finite product of Euclidean cubes is equipped with the maximum
norm. We equip $([0,1]^m)^G$ with the left shift
$(h\cdot z)_g=z_{gh},$ $g,h\in G$,
and define the equivariant orbit map
\[
 q^G:X\longrightarrow([0,1]^m)^G,
 \qquad
 q^G(x)_g=q(gx).
\]
Then
$q^G(hx)=h\cdot q^G(x)$.

More generally, if
$\xi=(\xi_\lambda)_{\lambda\in L}$ is a finite tuple of continuous
functions and $D\subset G$ is finite, put
\[
 \xi^D(x)
 :=
 \bigl(\xi_\lambda(dx)\bigr)_{(d,\lambda)\in D\times L}.
\]
For a single vector-valued map this reduces to
$\xi^D(x)=(\xi(dx))_{d\in D}$.

For a finite indexed family
$\mathcal C=(C_j)_{j=1}^J$ of subsets of a space $X$, define
\[
 \operatorname{loc}_{\mathcal C}(x)
 :=
 \{j:x\in C_j\},
 \qquad
 \mult_{\mathcal C}(x)
 :=
 |\operatorname{loc}_{\mathcal C}(x)|.
\]
The multiplicity of $\mathcal C$ is
\[
 \mult(\mathcal C)
 :=
 \sup_{x\in X}\mult_{\mathcal C}(x).
\]
If $\mathcal C$ is a finite cover, define
\[
 \ord_{\mathcal C}(x)
 :=
 \mult_{\mathcal C}(x)-1,
 \qquad
 \ord(\mathcal C)
 :=
 \mult(\mathcal C)-1
 =
 \sup_{x\in X}\ord_{\mathcal C}(x).
\]

For a finite open cover $\mathcal U$ of a compact metrizable space, set
\[
 D(\mathcal U)
 :=
 \min\bigl\{\ord(\mathcal V):
       \mathcal V\text{ is a finite open cover refining }\mathcal U\bigr\}.
\]
For a cover $\mathcal U$ of $(X,\rho)$, write
\[
 \mesh(\mathcal U):=\sup_{U\in\mathcal U}\operatorname{diam}_\rho(U).
\]
A map $H:(X,\rho)\to Y$ is an $\eta$-\textbf{embedding} if
$H(x)=H(y)\Longrightarrow \rho(x,y)<\eta$.

Let $H:Y\to Z$ be a map and $C\subset Y$. We say that $H$
\textbf{encodes} $C$ if
$$H^{-1}(H(C))=C.$$
We say that $H$ \textbf{encodes a finite family} if it encodes every member of that family.

\subsection{Partitions of unity and simplicial notation}\label{subsec:ve}

Let
\[
 \boldsymbol{\psi}=(\psi_1,\ldots,\psi_J):X\to\Delta^{J-1}:=\left\{(z_1,\ldots ,z_J)\in [0,1]^J:\sum_{j=1}^Jz_j=1\right\}
\]
be a partition of unity.  For a finite set
$E\subset G$, denote
\[
 \boldsymbol{\psi}^{E}(x):=
 (\boldsymbol{\psi}(tx))_{t\in E}.
\]
Given an indexed open cover
$(\widetilde{O_j})_{j=1}^J$, we say that $\boldsymbol{\psi}$ is \textbf{honest} for
$(\widetilde{O_j})_{j=1}^J$ if
$\{\psi_j>0\}=\widetilde{O_j}$, $1\le j\le J$.
Suppose moreover that a closed cover
$(D_j)_{j=1}^J$ is fixed and
$\supp\psi_j\subset D_j$, $1\le j\le J$. We call the triple
\[
\mathcal H:=\hps
\]
an \textbf{honest partition structure}.

For a nonempty $I\subset\{1,\ldots,J\}$, let $\Delta_I$ be the
coordinate face of $\Delta^{J-1}$ spanned by the vertices indexed by $I$. Write $\mathcal D:=(D_j)_{j=1}^J$.
Since $\supp\psi_j\subset D_j$, 
\[
 \boldsymbol\psi(x)\in\Delta_{\operatorname{loc}_{\mathcal D}(x)},
 \qquad
 \dim\Delta_{\operatorname{loc}_{\mathcal D}(x)}
 =\ord_{\mathcal D}(x).
\]

For later use, we also fix the polynomial coordinates on the simplex.
For $d\in\mathbb N_0$, let
\[
 \mathcal A_{J,d}
 :=
 \left\{
 \alpha=(\alpha_1,\ldots,\alpha_J)\in\mathbb N_0^J:
 |\alpha|:=\alpha_1+\cdots+\alpha_J\le d
 \right\},
\]
and denote
$M_{J,d}:=|\mathcal A_{J,d}|=\binom{J+d}{d}$.
For $z=(z_1,\ldots,z_J)\in\Delta^{J-1}$ and
$\alpha\in\mathcal A_{J,d}$, write
$z^\alpha:=z_1^{\alpha_1}\cdots z_J^{\alpha_J}$.
We define the \textbf{degree-$d$ Veronese map} by
\[
 \Phi_d:\Delta^{J-1}\longrightarrow\mathbb R^{M_{J,d}},
 \qquad
 \Phi_d(z):=(z^\alpha)_{\alpha\in\mathcal A_{J,d}}.
\]
Thus every polynomial $P$ on $\mathbb R^J$ of degree at most $d$
can be written in the form
\begin{equation}\label{eq:v-rep}
 P(z)=\langle a_P,\Phi_d(z)\rangle\qquad\text{for some $a_P\in\mathbb R^{M_{J,d}}$.}    
\end{equation}

We shall also use the following standard interpolation terminology.

\begin{definition}
Let $Y$ be a set and let $k\ge1$. A finite family of functions
$h_1,\ldots,h_M:Y\to\mathbb R$ is called \textbf{$k$-interpolating on $Y$}
if, for every $1\le q\le k$, every collection of pairwise distinct
points $z_1,\ldots,z_q\in Y$, and every
$(u_1,\ldots,u_q)\in\mathbb R^q$, there exists
$\theta=(\theta_1,\ldots,\theta_M)\in\mathbb R^M$ such that
\[
 \sum_{\alpha=1}^M\theta_\alpha h_\alpha(z_i)=u_i,
 \qquad 1\le i\le q.
\]
\end{definition}

By the classical polynomial interpolation argument
\cite[Lemma~4.1(1)]{SYC91}; see also \cite[Section 1.2, eq. (1.9)]{gasca2000polynomial}, and \eqref{eq:v-rep}, 
\begin{equation}\label{eq:interpolation}
     \text{the family }\{z\mapsto z^\alpha:\alpha\in\mathcal A_{J,d}\}\text{ is $(d+1)$-interpolating on $\Delta^{J-1}$.}
\end{equation}

\subsection{Uniform Rokhlin property and comparison}
An \textbf{open castle} is a finite family
$
 \{(S_i,V_i)\}_{i=1}^r,
$
where each $S_i\subset G$ is finite and each $V_i\subset X$ is
open, such that the levels
$
 sV_i,
$ $ 1\le i\le r, s\in S_i,$
are pairwise disjoint. Each pair $(S_i,V_i)$ is a \textbf{tower}, $S_i$ is its
\textbf{shape}, $V_i$ is its \textbf{base}, and the sets $sV_i$, $s\in S_i$,
are its \textbf{levels}. The closed set
$X\setminus\bigcup_{i=1}^r S_iV_i$ is the \textbf{remainder} of the castle.

For a finite set $K\subset G$ and $\alpha>0$, a nonempty finite set
$S\subset G$ is called $(K,\alpha)$-\textbf{invariant} if
$
 \left|\bigcap_{k\in K}kS\right|
 >
 (1-\alpha)|S|.
$

For a finite set $L\subset G$, an open set $U\subset X$ is
$L$-\textbf{free} if the family $\{\ell U:\ell\in L\}$ is pairwise
disjoint.  It is a \textbf{sweep-out set} if $MU=X$ for some finite
$M\subset G$.  The action has the \textbf{marker property} if it admits
an $L$-free sweep-out set for every finite $L\subset G$.

The following property of finite invariance is
a standard consequence; see, for example,
\cite[Section~4.1]{kerrli2016}.

\begin{lemma}
\label{lem:finite-invariance-propagation}
Let $A,E,K\subset G$ be finite, with $e\in A$ and
$E\ne\emptyset$, and let $\varepsilon_A,\varepsilon_E>0$ and
$\delta\in(0,1)$.  Then there exist a finite set $K_0\subset G$ and
$\delta_0\in(0,1)$ such that every nonempty finite
$(K_0,\delta_0)$-invariant set $S\subset G$ satisfies
$|AS|<(1+\varepsilon_A)|S|$ and
$|ES|<(1+\varepsilon_E)|S|$, and $AS$ is
$(K,\delta)$-invariant.
\end{lemma}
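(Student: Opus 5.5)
The plan is to reduce everything to two elementary counting facts about F\o lner-type sets: a bound on the boundary of $S$ with respect to a finite set, and the inclusion $\bigcap_{k\in K}kAS\supset A'\bigl(\bigcap_{k\in K'}kS\bigr)$ for suitable finite sets $A',K'$. Throughout, put $B:=A\cup E\cup\{e\}$, a finite set containing $e$. For finite $L\subset G$, define the $L$-boundary of $S$ as $\partial_L S:=\{s\in S: Ls\not\subset S\}$. We have $S\setminus\partial_L S=\bigcap_{\ell\in L}\ell^{-1}S$, so if $S$ is $(L^{-1},\beta)$-invariant then $|\partial_L S|<\beta|S|$.

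\emph{Step 1 (growth bounds).} Every element of $BS\setminus S$ has the form $bs$ with $s\in S$, $b\in B$ and $bs\notin S$; in particular $s\in\partial_B S$. Hence $|BS\setminus S|\le|B|\,|\partial_B S|$. Choose $K_0\supset B^{-1}$ and $\delta_0\le\min(\varepsilon_A,\varepsilon_E)/(2|B|)$. Then $|BS|<(1+\min(\varepsilon_A,\varepsilon_E))|S|$, and since $AS,ES\subset BS$, the first two conclusions follow. (If $e\notin E$ this is still fine, because we used $B\ni e$.)

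\emph{Step 2 (invariance of $AS$).} Set $S':=\bigcap_{k\in K\cup\{e\}}\bigcap_{a\in A}a^{-1}k^{-1}kaS$. I want the cleaner statement: if $s\in S$ satisfies $k a' s\in S$ wait—more directly, define $T:=\{s\in S: k^{-1}as\in AS \text{ for all }k\in K,a\in A\}$, or better, use the stronger condition $C:=\{s\in S: A^{-1}K^{-1}As\subset S\}$. Then for $s\in C$, $a\in A$ and $k\in K$ we get $k^{-1}as=a_0^{-1}\cdot(a_0k^{-1}as)$ with $a_0=e\in A$, and $k^{-1}as\in A^{-1}K^{-1}As\subset S\subset AS$. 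Thus $as\in kAS$ for all $k$, so $AC\subset\bigcap_{k\in K}kAS$. Now $S\setminus C=\partial_{L}S$ with $L:=A^{-1}K^{-1}A$ finite, so with $K_0\supset L^{-1}$ we get $|C|>(1-\beta)|S|$ when $\delta_0\le\beta$. Then
\[
\Bigl|\bigcap_{k\in K}kAS\Bigr|\ge|AC|\ge|C|>(1-\beta)|S|\ge\frac{1-\beta}{1+\varepsilon'}\,|AS|,
\]
using Step 1 with some $\varepsilon'$ in place of $\varepsilon_A$. Choose $\beta,\varepsilon'$ small enough that $(1-\beta)/(1+\varepsilon')\ge 1-\delta$.

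\emph{Conclusion.} Take $K_0:=B^{-1}\cup L^{-1}$ and take $\delta_0$ to be the minimum of the finitely many thresholds above. Both parts hold simultaneously. The only delicate point is the inclusion in Step 2, namely choosing the finite set $L$ so that interior points of $S$ generate a large subset of $\bigcap_k kAS$. The key is that $e\in A$, which gives $S\subset AS$ and $|AC|\ge|C|$; the rest is bookkeeping.
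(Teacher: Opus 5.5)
Your proof is correct. The paper does not prove this lemma; it only cites it as a standard consequence of the F\o lner machinery in Kerr--Li \cite[Section~4.1]{kerrli2016}. Your argument is the usual boundary-counting proof of that kind, and it is self-contained with explicit thresholds. Step~1 bounds $|BS\setminus S|\le|B|\,|\partial_B S|$ with $B=A\cup E\cup\{e\}$. Step~2 shows $AC\subset\bigcap_{k\in K}kAS$ for the interior set $C$ and then uses $|AC|\ge|C|$ together with $|AS|<(1+\varepsilon')|S|$.

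One statement needs correcting. In general $S\setminus\partial_L S=S\cap\bigcap_{\ell\in L}\ell^{-1}S$, and this equals $\bigcap_{\ell\in L}\ell^{-1}S$ only when $e\in L$.
\begin{itemize}
\item For $L=B$ this is fine.
\item $L=A^{-1}K^{-1}A$ need not contain $e$; take $A=\{e\}$ and $K=\{g\}$ with $g\ne e$. Then $L^{-1}$ is a singleton, and $(L^{-1},\beta)$-invariance holds automatically because $|gS|=|S|$. So that condition alone gives no control of $|C|$.
\end{itemize}
Nothing actually breaks. Your final $K_0=B^{-1}\cup L^{-1}$ contains $e$, so $\bigcap_{k\in K_0}kS\subset\bigcap_{k\in L^{-1}\cup\{e\}}kS=C$, and hence $|C|>(1-\delta_0)|S|$. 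Phrase the step that way.

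Two smaller points. You may take $L=K^{-1}A$, since only $K^{-1}As\subset S$ is used. Also remove the abandoned definitions of $S'$ and $T$ in Step~2.
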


For a closed set $A\subset X$ and an open set $B\subset X$, we write
$
 A\prec B
$
if there exist a finite open cover $\mathcal U$ of $A$ and elements
$g_U\in G$, $U\in\mathcal U$, such that the sets
$
 \{g_UU:U\in\mathcal U\}
$
are pairwise disjoint subsets of $B$.
\begin{definition}[{Kerr \cite[Definition~3.2]{kerr2020}}] \label{def:comparison}
The action $G\curvearrowright X$ has \textbf{comparison} if
$
 A\prec B
$
whenever $A\subset X$ is closed, $B\subset X$ is open, and
$\mu(A)<\mu(B)$ for every $\mu\in  \rPG{}(X)$.
\end{definition}
 
\begin{definition}[{Niu \cite{niu2022}}]
The action $G\curvearrowright X$ has the
\textbf{uniform Rokhlin property} (URP) if, for every finite
$K\subset G$ and every $\varepsilon>0$, there is an open castle
$
 \{(S_i,V_i)\}_{i=1}^n
$
such that every $S_i$ is $(K,\varepsilon)$-invariant and
\[
\sup_{\mu\in  \rPG{}(X)}\mu\left(
 X\setminus\bigsqcup_{i=1}^n S_iV_i
 \right)<\varepsilon.
\]
\end{definition}

As the remainder is closed, this invariant-measure formulation agrees
with Naryshkin's upper-density formulation; see
\cite[Subsection~2.1]{naryshkin2024}.
\begin{remark}\label{rem:urp-almost-free}
If an action has URP, then it is free almost everywhere for every invariant probability measure. This assertion does not require topological freeness. Write $\operatorname{Fix}(g):=\{x\in X:gx=x\}$.
Fix $g\ne e$ and $\varepsilon>0$, and apply URP with $K=\{e,g^{-1}\}$. For each resulting shape $S_i$,  denote
$C_i:=S_i\cap g^{-1}S_i$. Then
$|S_i\setminus C_i|<\varepsilon|S_i|$.
If $s\in C_i$ and $z\in sV_i$ satisfies $gz=z$, then
$z=gz\in gsV_i$, contradicting the disjointness of the distinct
levels $sV_i$ and $gsV_i$. Thus, for every $\mu\in \mathcal P_G(X)$,
\[
\begin{aligned}
 \mu(\operatorname{Fix}(g))
 &\le \mu\left(X\setminus\bigsqcup_i S_iV_i\right)
       +\sum_i|S_i\setminus C_i|\mu(V_i)\\
 &<\varepsilon+\varepsilon\sum_i|S_i|\mu(V_i)
 \le 2\varepsilon.
\end{aligned}
\]
Letting $\varepsilon\downarrow0$ gives
$\mu(\operatorname{Fix}(g))=0$. Since $G$ is countable, the action
is free $\mu$-almost everywhere for every invariant $\mu$.
\end{remark}

\begin{definition}[Naryshkin \cite{naryshkin2024}]
The action $G\curvearrowright X$ has
\textbf{URPC} if it has both the uniform Rokhlin property and comparison.
\end{definition}

We shall use the following finite-data formulation.

\begin{definition}[{Naryshkin \cite[Definition~3.24]{naryshkin2024}}]\label{def:URPC-witness}
Let $K\subset G$ be finite and let $\alpha>0$. A
$(K,\alpha)$-\textbf{URPC witness} consists of finite data
\[
 \mathfrak W
 =
 \left(
 \{S_i\}_{i=1}^r,
 \{V_i\}_{i=1}^r,
 \{g_j\}_{j=1}^q,
 \{U_j\}_{j=1}^q
 \right)
\]
such that:
\begin{enumerate}[label=\textup{(U\arabic*)},leftmargin=*]
\item
$\{(S_i,V_i)\}_{i=1}^r$ is an open castle such that every $S_i$ is
$(K,\alpha)$-invariant;

\item
the open sets $U_1,\ldots,U_q$ cover the castle remainder
$
 X\setminus\bigsqcup_{i=1}^r S_iV_i;
$

\item
the index set $\{1,\ldots,q\}$ is partitioned into subsets
$J_{i,c}$, where
$
 1\le i\le r,
$ and $
 1\le c\le\lfloor\alpha|S_i|\rfloor,
$
such that, whenever $j\in J_{i,c}$,
$
 g_jU_j\subset V_i,
$
and, for every fixed pair $(i,c)$, the sets
$
 \{g_jU_j:j\in J_{i,c}\}
$
are pairwise disjoint.
\end{enumerate}
Empty index ranges and empty sets $J_{i,c}$ are allowed.
The elements $g_j$ are called \textbf{transporters}, and the integers $c$ are called \textbf{colors}.

\end{definition}

\begin{proposition}[{Naryshkin \cite[Theorem~3.20(iii)]{naryshkin2024}}]
\label{prop:urpc-witness-equivalence}
A free action has URPC if and only if it admits a $(K,\alpha)$-URPC
witness for every finite $K\subset G$ and every $\alpha>0$.
\end{proposition}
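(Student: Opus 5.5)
The proposition has two directions, and I would prove each by converting between the two formulations directly.

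\emph{Witness $\Rightarrow$ URPC.} Fix $K$ and $\varepsilon>0$. Take a $(K,\alpha)$-URPC witness with $\alpha\le\varepsilon$ small, and keep its castle $\{(S_i,V_i)\}$. The shapes are already $(K,\varepsilon)$-invariant, so only the remainder $R:=X\setminus\bigsqcup_i S_iV_i$ needs to be controlled. Fix $\mu\in\rPG(X)$. By (U2) and (U3),
\[
\mu(R)\le\sum_j\mu(U_j)=\sum_{i}\sum_{c\le\lfloor\alpha|S_i|\rfloor}\sum_{j\in J_{i,c}}\mu(g_jU_j).
\]
Within a fixed color $(i,c)$ the sets $g_jU_j$ are pairwise disjoint subsets of $V_i$, and $\mu$ is invariant, so the inner sum is at most $\mu(V_i)$. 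Hence
\[
\mu(R)\le\sum_i\alpha|S_i|\mu(V_i)\le\alpha,
\]
because the levels of the castle are disjoint. Taking $\alpha<\varepsilon$ gives URP.

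For comparison, let $A$ be closed and $B$ open with $\mu(A)<\mu(B)$ for all $\mu\in\rPG(X)$. By compactness of $\rPG(X)$ and semicontinuity ($\mu\mapsto\mu(A)$ is upper semicontinuous and $\mu\mapsto\mu(B)$ is lower semicontinuous), there is a uniform gap $\eta>0$ with $\mu(A)+\eta<\mu(B)$. I would then run the standard tower-counting argument (as in Kerr's or Downarowicz--Zhang's proofs that URP-type towers yield comparison), choosing $K$ large and $\alpha$ small.

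1. Inside each level of a tower with a sufficiently invariant shape, compare the number of levels meeting $A$ with the number contained in $B$. Ergodic-average estimates, uniform in $\mu$, force the count for $B$ to exceed the count for $A$.
2. Move the pieces of $A$ inside the tower into pieces of $B$ along the shape.
3. Route the remainder part of $A$, which is covered by the $U_j$, into $V_i$ via the transporters $g_j$, and from there into unused $B$-levels. The colors bound how many remainder pieces land in each tower, namely at most $\alpha|S_i|$, and this is absorbed by the slack coming from $\eta$.

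The main obstacle is step 1. The counting must hold pointwise in the base, not just on average; the fix I would try is to shrink the bases via a compactness and partition argument (as Naryshkin does) so that the levels are small relative to a fine partition adapted to $A$ and $B$.

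\emph{URPC $\Rightarrow$ witness.} Given $K$ and $\alpha$, first use URP to get a castle with $(K,\alpha)$-invariant shapes whose remainder $R$ satisfies $\sup_\mu\mu(R)<\delta$, with $\delta$ much smaller than $\alpha\min_\mu\mu(\bigsqcup_i V_i)$.

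The idea is to split each base into $\lfloor\alpha|S_i|\rfloor$ color copies. Since $\mu(V_i)\approx\mu(S_iV_i)/|S_i|$, the remainder has small enough measure that $R\prec$ each designated open target uniformly. To make this work, I would first slightly shrink the towers to closed cores so that the remainder gains an open neighborhood. Comparison then gives an open cover of $R$ with translates landing disjointly in $V_i$.

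Finally, regroup the resulting cover elements into colors so that each color class is disjoint, using several comparison applications, one per color. I expect this regrouping, together with the uniform-in-$\mu$ measure estimate needed to invoke comparison, to be the technical core.
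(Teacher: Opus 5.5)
\textbf{Direction witness $\Rightarrow$ URPC.} Your derivation of URP from a witness is correct: $\mu(R)\le\sum_i\lfloor\alpha|S_i|\rfloor\mu(V_i)\le\alpha$. The comparison half, however, has a genuine gap, and you have placed the obstacle in the wrong step.

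Step~1 is not the problem. As you phrase it (levels meeting $A$ versus levels contained in $B$), it can fail when levels are large. Its pointwise version is immediate: $w=\mathbf 1_A-\mathbf 1_B$ is bounded and upper semicontinuous with $\sup_\mu\int w\,d\mu<0$. So Lemma~\ref{lem:uniform-orbit-average} gives $\eta'>0$ such that $|T_A(x)|+\eta'|S_i|<|T_B(x)|$ for every $x$ and every sufficiently invariant shape, where $T_A(x)=\{s\in S_i:sx\in A\}$ and $T_B(x)=\{s\in S_i:sx\in B\}$.

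The real obstacle is step~2. The sets $T_A$ and $T_B$ are not locally constant on $V_i$; near a given point they can only shrink, resp.\ grow. Hence an injection $\varphi_W\colon T_A\to T_B$ can be fixed only on small open pieces $W$ of the base. The images $\varphi_W(s)W$ and $\varphi_{W'}(s')W'$ then intersect whenever $W\cap W'\ne\emptyset$ and $\varphi_W(s)=\varphi_{W'}(s')$. You would need injections that are compatible on all overlaps of an open cover of a base that may be connected and infinite-dimensional. The counting inequality gives nothing of this kind. Step~3 (lifting the transported remainder pieces, which overlap across colours, to $B$-levels) runs into the same problem.

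The arguments you cite avoid this because there the pattern is constant on whole levels: clopen bases in the zero-dimensional setting, and levels of diameter below the scale of $A,B$ in Kerr's almost finiteness. A witness provides neither, and your proposed fix cannot manufacture it. Cutting an open base into disjoint open pieces leaves a closed boundary that must be added to the remainder. Nothing in the witness dominates it, and it need not be small for any invariant measure. Indeed, if castles with arbitrarily small levels and dominated remainder always existed, the action would be almost finite, hence of mean dimension zero. Yet free minimal $\mathbb Z$-actions of positive mean dimension have URPC. So witness $\Rightarrow$ comparison, the harder half of the equivalence (which the paper only cites), is not established by your outline.

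\textbf{Direction URPC $\Rightarrow$ witness.} This is easier than your plan suggests, and the plan as written is circular. You fix $\delta$ before URP produces the castle, but $\min_\mu\mu(\bigsqcup_iV_i)$, which can be as small as about $1/\max_i|S_i|$, is known only afterwards.

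The target should not be $V_i$ but $\lfloor\alpha|S_i|\rfloor$ distinct levels of each tower. It suffices to treat $\alpha<1$.
\begin{enumerate}[label=\textup{(\arabic*)},leftmargin=*]
\item Add to $K$ a set $E$ with $|E|\ge 2/\alpha$; invariance with parameter $<1$ forces $|S_i|\ge|E|$.
\item Take a URP castle with $(K\cup E,\alpha/3)$-invariant shapes and $\sup_\mu\mu(R)<\alpha/3$.
\item Pick distinct $s_{i,1},\dots,s_{i,\lfloor\alpha|S_i|\rfloor}\in S_i$ and put $B:=\bigsqcup_{i,c}s_{i,c}V_i$.
\end{enumerate}
Then $\mu(B)\ge\frac\alpha2\,\mu(\bigsqcup_iS_iV_i)>\mu(R)$ for every $\mu\in\mathcal P_G(X)$. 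A single application of comparison to the closed set $R$ therefore gives an open cover $\mathcal U$ of $R$ with pairwise disjoint translates $g_UU\subset B$.

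Split each $U$ into the open sets $U\cap g_U^{-1}(s_{i,c}V_i)$. Give each piece the transporter $s_{i,c}^{-1}g_U$ and the colour $c$. Within a colour the images are disjoint because the sets $g_UU$ are. No closed cores, per-colour comparisons, or regrouping are needed.
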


\subsection{Amenability and mean dimension}

A sequence $(F_n)_{n\ge1}$ of nonempty finite subsets of $G$ is a
\textbf{F{\o}lner sequence} if
\[
 \lim_{n\to\infty}
 \frac{|gF_n\triangle F_n|}{|F_n|}
 =0
 \qquad\text{for every }g\in G.
\]
Since $G$ is amenable, such sequences exist.

For a finite open cover $\mathcal U$ of $X$ and a nonempty finite
set $F\subset G$, denote
\[
 \mathcal U^F
 :=
 \bigvee_{g\in F}g^{-1}\mathcal U
 =
 \left\{
 \bigcap_{g\in F}g^{-1}U_g:
 U_g\in\mathcal U
 \right\}.
\]

Let $(F_n)_{n\ge1}$ be a F{\o}lner sequence.  The mean dimension of
the cover $\mathcal U$ is
\[
 \mdim(\mathcal U,G)
 :=
 \lim_{n\to\infty}
 \frac{D(\mathcal U^{F_n})}{|F_n|}.
\]
By the Ornstein--Weiss lemma \cite{ornsteinWeiss1987,lindenstraussweiss2000}, the limit exists and is independent of
the choice of the F{\o}lner sequence.  The \textbf{mean dimension} of
the action is
$$\mdim(X,G) := \sup_{\mathcal U}\mdim(\mathcal U,G),$$
where the supremum is taken over all finite open covers
$\mathcal U$ of $X$.

Let $\mathcal U$ and $\mathcal V$ be covers of a topological space $X$.
We say that $\mathcal V$ \textbf{refines} $\mathcal U$, and write
$
\mathcal V\succ\mathcal U,
$
if for every $V\in\mathcal V$ there exists $U\in\mathcal U$ such that
$
V\subset U.
$

Meyerovitch \cite{meyerovitch2026} defined the following invariant, called the \textbf{dynamical dimension}:
\[
  \rDdim(X,G)
: =
 \sup_{\mathcal A}
 \inf_{\mathcal B\succ\mathcal A}
 \sup_{\mu\in  \rPG{}(X)}
 \int_X
  \ord_{\mathcal B}(x)
 \,d\mu(x),
\]
where $\mathcal A$ and $\mathcal B$ range over finite open covers
of $X$. By \cite[Theorem~9.2]{meyerovitch2026}, URP implies
\begin{equation}\label{eq:dynamical-mean-dimension}
  \rDdim(X,G)=\mdim(X,G).    
\end{equation}

\subsection{Semialgebraic sets}
For a topological space $Z$, $\dim Z$ denotes its Lebesgue covering
dimension.

We recall facts about semialgebraic sets that will be used below; see
\cite[Sections~2.2 and~2.8]{bcr1998}.

A subset $Z\subset\mathbb R^N$ is called \textbf{semialgebraic} if it
is a finite union of sets of the form
\[
 \left\{
 x\in\mathbb R^N:
 p_1(x)=\cdots=p_r(x)=0,\ 
 q_1(x)>0,\ldots,q_s(x)>0
 \right\},
\]
where $p_1,\ldots,p_r,q_1,\ldots,q_s$ are real polynomials.

If $Z\subset\mathbb R^N$ is semialgebraic, a map
$
 f:Z\to\mathbb R^M
$
is called \textbf{semialgebraic} if its graph
\[
 \Gamma_f
 :=
 \{(z,f(z)):z\in Z\}
 \subset\mathbb R^{N+M}
\]
is semialgebraic.  A map with values in a finite-dimensional vector
space, or in a matrix space, is semialgebraic if its coordinate
functions are semialgebraic.

\begin{lemma}
\label{lem:basic-semialgebraic-facts}
Let $X,Y$ be semialgebraic sets.
\begin{enumerate}[label=\textup{(\roman*)},leftmargin=*]

\item If
$
 X=X_1\cup\cdots\cup X_N
$
with each $X_i$ semialgebraic, then
$$\dim X=\max_{1\le i\le N}\dim X_i.$$

\item The product $X\times Y$ is semialgebraic and
$
 \dim(X\times Y)=\dim X+\dim Y.
$

\item The closure $\overline X$ is semialgebraic and
$
 \dim\overline X=\dim X.
$

\item If $f:X\to Y$ is semialgebraic, $A\subset X$ is
semialgebraic, and $B\subset Y$ is semialgebraic, then $f(A)$ and
$f^{-1}(B)$ are semialgebraic and
$$\dim f(A)\le\dim A.$$

\item Let $f:X\to Y$ be a continuous semialgebraic map.  If every
nonempty fibre $f^{-1}(y)$ has dimension at most $q$, then
$$\dim X\le\dim f(X)+q \le\dim Y+q.$$

\item If $X\subset\mathbb R^N$ and
$
 \dim X<N,
$
then
\[
 \operatorname{int}_{\mathbb R^N}(\overline X)=\emptyset.
\]
That is, $X$ has nowhere dense closure. 

\end{enumerate}
\end{lemma}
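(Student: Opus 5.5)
This lemma collects standard facts from real semialgebraic geometry, so I will not prove anything from scratch. For each item I will point to the corresponding result in \cite[Sections~2.2 and~2.8]{bcr1998}. The organizing tool is the dimension theory there: every semialgebraic set $X$ admits a finite decomposition into cells (semialgebraic sets semialgebraically homeomorphic to open cubes $(0,1)^{d}$). The semialgebraic dimension is the largest $d$ occurring, and it coincides with the Lebesgue covering dimension used in the paper.

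\begin{enumerate}[label=\textup{(\roman*)},leftmargin=*]
\item Refine the decompositions of the $X_i$ into a common cell decomposition of $X$. Then (i) follows from the definition of dimension as the largest cell dimension.
\item A product of cell decompositions is a cell decomposition whose cells are products, with dimensions adding. This gives (ii).
\item Semialgebraicity of the closure follows from Tarski--Seidenberg, since $\overline X$ is defined by a first-order formula. The equality $\dim\overline X=\dim X$ is \cite[Prop.~2.8.2]{bcr1998}; it rests on $\dim(\overline X\setminus X)<\dim X$.
\item Images and preimages are semialgebraic by Tarski--Seidenberg, as projections of $\Gamma_f\cap(A\times Y)$ and of $\Gamma_f\cap(X\times B)$. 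For the inequality, stratify $f$ so that it is a homeomorphism onto its image on each cell, using Hardt triviality or the cell decomposition adapted to $\Gamma_f$. Alternatively, project the graph: $\dim f(A)\le\dim(\Gamma_f\cap(A\times Y))=\dim A$.
\item Apply Hardt's semialgebraic triviality theorem \cite[Thm.~9.3.2]{bcr1998}. It partitions $f(X)$ into finitely many semialgebraic $Y_k$ over which $f$ is trivial, so $f^{-1}(Y_k)\cong Y_k\times F_k$ with $\dim F_k\le q$. Then (ii) and (i) give $\dim X\le\max_k(\dim Y_k+q)\le\dim f(X)+q$. Finally $\dim f(X)\le\dim Y$ by monotonicity, which is a special case of (iv).
\item By (iii), $\dim\overline X=\dim X<N$. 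If $\overline X$ contained a nonempty open box, that box would be a semialgebraic subset of dimension $N$, contradicting monotonicity of dimension under inclusion, which is again (iv) applied to the inclusion map.
\end{enumerate}

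The only step that needs real care is (v). It requires Hardt triviality, or equivalently a cell decomposition adapted to $f$, and not merely the dimension theory of sets. I would simply cite that theorem. The other point to settle is which notion of dimension is meant. I would record once that, for semialgebraic sets, the covering dimension equals the cell-decomposition dimension \cite[Sec.~2.8]{bcr1998}, so that every item may be read with $\dim$ as used elsewhere in the paper.
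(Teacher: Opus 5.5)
Your proposal is correct and follows essentially the same route as the paper: each item is reduced to the corresponding result in \cite{bcr1998}. Hardt's triviality theorem carries (v), and (vi) combines (iii) with the fact that a nonempty open subset of $\mathbb R^N$ has dimension $N$.

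Two small slips need fixing. First, monotonicity of dimension under inclusion, which you use in (v) and (vi), follows from (i) via $B=A\cup(B\setminus A)$; it does not follow from (iv) applied to an inclusion map, which only gives $\dim A\le\dim A$. Second, in (iv) a semialgebraic map cannot in general be stratified into homeomorphisms onto their images (a constant map is a counterexample). Keep only your graph-projection argument, or cite \cite[Theorem~2.8.8]{bcr1998} directly as the paper does.
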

\begin{proof}
 (i) and (ii) are
\cite[Proposition~2.8.5 (i) (ii)]{bcr1998}, respectively.

For (iii), the semialgebraicity of $\overline X$ follows from
\cite[Proposition~2.2.2]{bcr1998}, while
$
 \dim\overline X=\dim X
$
is \cite[Proposition~2.8.2]{bcr1998}.

For (iv), the semialgebraicity of $f(A)$ and $f^{-1}(B)$ follows
from \cite[Proposition~2.2.7]{bcr1998}, and
$
 \dim f(A)\le \dim A
$
is \cite[Theorem~2.8.8]{bcr1998}.

For (v), apply Hardt's semialgebraic triviality theorem
\cite[Theorem~9.3.2]{bcr1998}.  There is a finite semialgebraic
partition
$
 f(X)=\bigsqcup_{\ell=1}^s Y_\ell
$
such that, for each $\ell$,
there is a semialgebraic homeomorphism
$h_\ell:f^{-1}(Y_\ell)\to Y_\ell\times F_\ell$
such that $\operatorname{pr}_1\circ h_\ell=f|_{f^{-1}(Y_\ell)}$.
Here $\operatorname{pr}_1$ is projection onto $Y_\ell$. In particular,
$h_\ell$ identifies each fibre over $y\in Y_\ell$ with $\{y\}\times F_\ell$.
Hence
$\dim F_\ell\le q$.
Using (i) and (ii), we obtain
\[
 \begin{aligned}
 \dim X
 &=
 \max_{1\le \ell\le s}\dim f^{-1}(Y_\ell)
 =
 \max_{1\le \ell\le s}
 \bigl(\dim Y_\ell+\dim F_\ell\bigr)\\
 &\le
 \max_{1\le \ell\le s}\dim Y_\ell+q=
 \dim f(X)+q\le \dim Y+q.
 \end{aligned}
\]

Finally, suppose that $X\subset\mathbb R^N$ and
$\dim X<N$.  By (iii),
$\dim\overline X=\dim X<N$.
If $\operatorname{int}_{\mathbb R^N}(\overline X)$ were nonempty,
then it would be a nonempty open semialgebraic subset of
$\mathbb R^N$, and therefore
\[
 \dim\operatorname{int}_{\mathbb R^N}(\overline X)=N
\]
by \cite[Proposition~2.8.4]{bcr1998}.  Since
$
 \operatorname{int}_{\mathbb R^N}(\overline X)\subset\overline X,
$
this would imply
$
 N\le \dim\overline X,
$
a contradiction.  Thus
\[
 \operatorname{int}_{\mathbb R^N}(\overline X)=\emptyset.
\]
Since $\overline X$ is closed, it follows that $X$ has nowhere
dense closure.
\end{proof}

\begin{proposition}\label{prop:semialgebraic-rank-partition}
Let $Z$ be a semialgebraic set and let
$M:Z\longrightarrow\operatorname{Mat}_{n\times P}(\mathbb R)$ be a
semialgebraic map. For every integer $r\ge0$, the set
$Z_r:=\{z\in Z:\rank M(z)=r\}$ is semialgebraic. The nonempty sets
$Z_r$ form a finite partition of $Z$.
\end{proposition}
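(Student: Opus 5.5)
The plan is to express the rank condition through minors. For each $r$ with $0\le r\le\min(n,P)$, the condition $\rank M(z)\ge r$ holds exactly when some $r\times r$ minor of $M(z)$ is nonzero. The condition $\rank M(z)\le r$ holds exactly when every $(r+1)\times(r+1)$ minor vanishes. We use the conventions that the empty $0\times 0$ minor equals $1$, and that there are no $(r+1)\times(r+1)$ minors once $r\ge\min(n,P)$.

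First we record that minors of $M$ are semialgebraic functions on $Z$. Each entry $M_{ab}:Z\to\mathbb R$ is semialgebraic by definition. Each minor $\Delta_{I,K}(z):=\det(M(z)_{I,K})$ is a fixed polynomial in finitely many entries, so it is the composition of the semialgebraic map $z\mapsto(M_{ab}(z))_{a,b}\in\mathbb R^{nP}$ with a polynomial map. Compositions of semialgebraic maps are semialgebraic, since the graph of the composition is a projection of an intersection of semialgebraic sets, and projections are semialgebraic by Tarski--Seidenberg (the same fact underlying Lemma~\ref{lem:basic-semialgebraic-facts}(iv)). Consequently, by Lemma~\ref{lem:basic-semialgebraic-facts}(iv), the sets $\Delta_{I,K}^{-1}(\{0\})$ and $\Delta_{I,K}^{-1}(\mathbb R\setminus\{0\})$ are semialgebraic.

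Next we write
\[
Z_r=\Bigl(\bigcup_{|I|=|K|=r}\Delta_{I,K}^{-1}(\mathbb R\setminus\{0\})\Bigr)\cap\Bigl(\bigcap_{|I|=|K|=r+1}\Delta_{I,K}^{-1}(\{0\})\Bigr).
\]
This is a finite Boolean combination of semialgebraic sets, so $Z_r$ is semialgebraic. For $r>\min(n,P)$ the set $Z_r$ is empty, which is trivially semialgebraic.

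Finally, since $\rank M(z)$ takes exactly one value in $\{0,\dots,\min(n,P)\}$ for each $z$, the sets $Z_r$ are pairwise disjoint and cover $Z$. Only finitely many of them are nonempty, so the nonempty ones form a finite partition of $Z$. There is no real obstacle here. The only point needing care is justifying that compositions and finite Boolean combinations preserve semialgebraicity, and this follows from the standard closure properties in \cite{bcr1998}.
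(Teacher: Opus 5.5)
Your proposal is correct and follows essentially the same route as the paper. Both characterize $\rank M(z)\le r$ by the vanishing of all $(r+1)\times(r+1)$ minors and $\rank M(z)\ge r$ by the nonvanishing of some $r\times r$ minor, note that these are semialgebraic conditions, and observe that only the ranks $0,\ldots,\min\{n,P\}$ occur; you additionally spell out why minors of a semialgebraic matrix-valued map are semialgebraic and state the conventions for $r=0$ and for $r\ge\min(n,P)$.
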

\begin{proof}
Note that $\rank M(z)\le r$ is equivalent to the vanishing of all
$(r+1)\times(r+1)$ minors, while $\rank M(z)\ge r$ is equivalent to the
nonvanishing of some $r\times r$ minor. These are semialgebraic conditions.
Only the ranks $0,\ldots,\min\{n,P\}$ can occur. See also
\cite[Sections~2.1--2.3]{bcr1998}.
\end{proof}

\section{Global low-dimensional coordinates}\label{sec:global-coordinates}

Assuming URP, we use the equality of dynamical dimension and mean
dimension to construct a partition of unity with uniformly controlled
local order along sufficiently invariant finite orbit windows. The resulting
orbit names lie in finite unions of products of simplex faces with the
corresponding dimension bound.

\begin{lemma}
\label{lem:global-dynamic-pou}
Assume that $G\curvearrowright X$ has URP and that
$\mdim(X,G)<a$.
Let $\mathcal W$ be a finite open cover of $X$, let
$f_0\in C(X,(0,1)^m)$, and let $\varepsilon>0$.
Then there exist an honest partition structure $\mathcal H=\hps$ 
and vectors $v_1,\ldots,v_J\in(0,1)^m$ such that
\begin{enumerate}[label=\textup{(\roman*)},leftmargin=*]
\item
 the closed cover $\mathcal D=(D_j)_{j=1}^J$ refines $\mathcal W$;

\item the affine observable
$
 f_*(x):=\sum_{j=1}^Jv_j\psi_j(x)
$
satisfies
$\|f_*-f_0\|_\infty<\varepsilon$;

\item
$
 \sup_{\mu\in  \rPG{}(X)}
 \int_X \rOrdD(x)\,d\mu(x)<a;
$

\item if $\boldsymbol{\psi}(x)=\boldsymbol{\psi}(y)$, then $x$ and $y$ lie in a common
member of $\mathcal W$.
\end{enumerate}
\end{lemma}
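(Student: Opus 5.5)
Since URP holds, \eqref{eq:dynamical-mean-dimension} gives $\rDdim(X,G)=\mdim(X,G)<a$. The plan is to choose a suitably fine finite open cover $\mathcal A$ and apply the definition of $\rDdim$ to it. This produces a finite open cover $\mathcal B\succ\mathcal A$ with
\[
 \sup_{\mu\in\rPG(X)}\int_X\ord_{\mathcal B}(x)\,d\mu(x)<a.
\]
For $\mathcal A$ I take a common refinement of $\mathcal W$ and a cover by sets on which $f_0$ oscillates by less than $\varepsilon$; for instance, take the open sets $W\cap f_0^{-1}(Q)$ with $W\in\mathcal W$ and $Q$ ranging over open cubes of side $<\varepsilon$. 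Write $\mathcal B=(B_j)_{j=1}^J$.

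First I shrink $\mathcal B$. By compactness and normality there are closed sets $D_j\subset B_j$ that still cover $X$; in fact I take a chain $D_j\subset\widetilde{O_j}\Subset B_j$ with $(D_j)$ a closed cover, where the open sets $\widetilde{O_j}$ are chosen as below. Then $\rLocD(x)\subset\operatorname{loc}_{\mathcal B}(x)$, so $\rOrdD\le\ord_{\mathcal B}$ pointwise, and (iii) follows. Since each $D_j\subset B_j$ lies in a member of $\mathcal A\succ\mathcal W$, item (i) follows.

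Next I need an honest partition of unity with $\{\psi_j>0\}=\widetilde{O_j}$ and $\supp\psi_j\subset D_j$. This forces $\overline{\widetilde{O_j}}\subset D_j$, so the order of the chain must be adjusted. I will choose closed sets $C_j$ covering $X$, open sets $\widetilde{O_j}\supset C_j$, and set $D_j:=\overline{\widetilde{O_j}}\subset B_j$. Then $(\widetilde{O_j})$ is an open cover. Pick continuous $\phi_j\ge0$ with $\{\phi_j>0\}=\widetilde{O_j}$ exactly; in a metric space $\phi_j=\rho(\cdot,X\setminus\widetilde{O_j})$ works. Put $\psi_j=\phi_j/\sum_k\phi_k$; the denominator is positive because the $\widetilde{O_j}$ cover $X$. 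This $\boldsymbol\psi$ is honest, and $\supp\psi_j=\overline{\widetilde{O_j}}=D_j$.

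For (ii), choose $v_j\in(0,1)^m$ as $f_0(x_j)$ for some $x_j\in B_j$. Whenever $\psi_j(x)>0$ we have $x\in B_j$, which lies in a single set where $f_0$ oscillates by less than $\varepsilon$. Hence
\[
 \|f_*(x)-f_0(x)\|\le\sum_j\psi_j(x)\,\|v_j-f_0(x)\|<\varepsilon,
\]
and the strict inequality is uniform by compactness.

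I expect (iv) to be the only delicate point. If $\boldsymbol\psi(x)=\boldsymbol\psi(y)$, pick $j$ with $\psi_j(x)>0$. Then $\psi_j(y)>0$, so both $x$ and $y$ lie in $\widetilde{O_j}\subset B_j$, and $B_j$ is contained in a member of $\mathcal A$, hence in a member of $\mathcal W$. So (iv) is immediate once the supports are nested inside $\mathcal B$, and it is exactly the honesty of $\boldsymbol\psi$ that makes this argument work.
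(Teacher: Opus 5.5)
Your proposal is correct and follows essentially the same route as the paper. You take a cover $\mathcal A\succ\mathcal W$ on whose members $f_0$ oscillates by less than $\varepsilon$, obtain $\mathcal B$ from the definition of $\rDdim$, shrink via $C_j\subset\widetilde{O_j}\Subset B_j$ with $D_j=\overline{\widetilde{O_j}}$, normalize functions positive exactly on $\widetilde{O_j}$, and set $v_j=f_0(x_j)$ with $x_j\in B_j$. The differences are cosmetic: you skip the paper's auxiliary $a_0$, which is legitimate since the infimum over $\mathcal B$ is already $<a$; you should let $Q$ range over a finite family of cubes covering $[0,1]^m$ so that $\mathcal A$ is finite; and you should discard empty $B_j$ before choosing $x_j$.
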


\begin{proof}
By \eqref{eq:dynamical-mean-dimension}, URP
implies
$ \rDdim(X,G)=\mdim(X,G)$.
Choose
$ \rDdim(X,G)<a_0<a$.

By uniform continuity of $f_0$, there is a finite open cover
$\mathcal A$ of $X$ which refines $\mathcal W$ and such that
$\|f_0(x)-f_0(y)\|_\infty<\varepsilon$
whenever $x$ and $y$ belong to a common member of
$\mathcal A$.

By the definition of $ \rDdim(X,G)$, there is a finite open refinement
$
 \mathcal B=(B_j)_{j=1}^J
$
of $\mathcal A$ such that
\[
 \sup_{\mu\in  \rPG{}(X)}
 \int_X
 \ord_{\mathcal B}(x)
 d\mu(x)
 <a_0.
\]
Discarding empty members, we may assume that every $B_j$ is
nonempty.  For each $j$, choose $A_j\in\mathcal A$ such that
$
 B_j\subset A_j.
$

Choose a closed shrinking $(C_j)_{j=1}^J$ of
$(B_j)_{j=1}^J$ such that
\[
 C_j\subset B_j
 \qquad\text{and}\qquad
 X=\bigcup_{j=1}^J C_j.
\]
Choose open sets $\widetilde{O_j}$ satisfying
$C_j\subset \widetilde{O_j}\Subset B_j$.
The sets $\widetilde{O_j}$ still cover $X$.  Set
$D_j:=\overline{\widetilde{O_j}}$.
Then $(D_j)_{j=1}^J$ is a finite closed cover of $X$, and
$D_j\subset B_j\subset A_j$.
Since $\mathcal A$ refines $\mathcal W$, each $D_j$ is contained
in a member of $\mathcal W$. So (i) holds.

For each $j$, choose a continuous function
$h_j:X\to[0,\infty)$ such that
$h_j(x)>0\Longleftrightarrow x\in \widetilde{O_j}$.
Since the $\widetilde{O_j}$'s cover $X$,
\[
 H(x):=\sum_{j=1}^Jh_j(x)>0
 \qquad(x\in X).
\]
Define
$\psi_j(x):=\frac{h_j(x)}{H(x)}$.
Then
$
 \boldsymbol{\psi}=(\psi_1,\ldots,\psi_J):X\to\Delta^{J-1}
$
is an honest partition of unity for
$(\widetilde{O_j})_{j=1}^J$, and
$\operatorname{supp}\psi_j
 = \overline{\widetilde{O_j}} = D_j$.
Thus, $\mathcal H=\hps$ is an honest partition structure.

Recall that
$
 \rOrdD(x)
 =
 \sum_{j=1}^J\mathbf 1_{D_j}(x)-1.
$
Since $D_j\subset B_j$ for every $j$,
\[
 \rOrdD(x)
 \le
 \sum_{j=1}^J\mathbf 1_{B_j}(x)-1.
\]
Consequently,
\[
 \sup_{\mu\in  \rPG{}(X)}
 \int_X\rOrdD(x)\,d\mu(x)
 \le
 \sup_{\mu\in  \rPG{}(X)}
 \int_X
 \ord_{\mathcal B}(x)
 d\mu(x)
 <a_0<a.
\]
This proves (iii).

For each $j$, choose $z_j\in B_j$ and denote
$v_j:=f_0(z_j)\in(0,1)^m$.
We always have $z_j\in B_j\subset A_j$. If $\psi_j(x)>0$, then
$x\in \widetilde{O_j}\subset B_j\subset A_j$.
Hence
$\|v_j-f_0(x)\| = \|f_0(z_j)-f_0(x)\| <\varepsilon$.
Since $\psi_j(x)\ge0$ and
$\sum_j\psi_j(x)=1$,
 \begin{align*}
 \|f_*(x)-f_0(x)\|
 =
 \left\|
 \sum_{j=1}^J
 \psi_j(x)\bigl(v_j-f_0(x)\bigr)
 \right\| \le
 \sum_{j=1}^J
 \psi_j(x)\|v_j-f_0(x)\|
 <\varepsilon.
 \end{align*}
Thus
$\|f_*-f_0\|_\infty<\varepsilon$, and (ii) holds.

Finally, suppose that
$
 \boldsymbol{\psi}(x)=\boldsymbol{\psi}(y).
$
Choose $j$ such that $\psi_j(x)>0$.  Then
$\psi_j(y)>0$ as well.  Since the partition of unity is honest,
$x,y\in \widetilde{O_j}\subset B_j\subset A_j$.
As $\mathcal A$ refines $\mathcal W$, the points $x$ and $y$
lie in a common member of $\mathcal W$.
\end{proof}

The following lemma converts a bound over invariant measures into a uniform
bound along sufficiently invariant finite sets. We include a proof, using
the empirical-measure argument related to \cite[Proposition~9.1]{meyerovitch2026}.

\begin{lemma}
\label{lem:uniform-orbit-average}
Let $w:X\to\mathbb R$ be a bounded upper semicontinuous function and
let $b\in\mathbb R$ satisfy
$
 \sup_{\mu\in  \rPG{}(X)}\int_Xw\,d\mu<b.
$
Then there exist a finite set $K\subset G$ and
$\delta>0$ such that every nonempty finite
$(K,\delta)$-invariant set $T\subset G$ satisfies
\[
 \sup_{x\in X}
 \frac1{|T|}
 \sum_{t\in T}w(tx)
 <b.
\]
\end{lemma}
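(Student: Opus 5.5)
I will argue by contradiction, using empirical measures along increasingly invariant sets. Suppose the conclusion fails. Enumerate $G=\{g_1,g_2,\dots\}$ and put $K_n:=\{g_1,\dots,g_n\}$, $\delta_n:=1/n$. For each $n$ there are a nonempty finite $(K_n,\delta_n)$-invariant set $T_n$ and a point $x_n\in X$ with
\[
 \frac1{|T_n|}\sum_{t\in T_n}w(tx_n)\ge b-\tfrac1n .
\]
(If the supremum is $\ge b$, some $x_n$ achieves at least $b-1/n$.) Form the empirical measures $\mu_n:=\frac1{|T_n|}\sum_{t\in T_n}\delta_{tx_n}$. By weak-$*$ compactness of $\operatorname{Prob}(X)$, pass to a subsequence with $\mu_n\to\mu$.

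First I will show that $\mu\in\rPG(X)$. Fix $g\in G$ and $\varphi\in C(X)$. Then
\[
 \Bigl|\int\varphi(gx)\,d\mu_n-\int\varphi\,d\mu_n\Bigr|\le\frac{\|\varphi\|_\infty\,|gT_n\triangle T_n|}{|T_n|}.
\]
The invariance notion gives $|\bigcap_{k\in K_n}kT_n|>(1-\delta_n)|T_n|$. Once $g,e\in K_n$, this yields $|gT_n\cap T_n|>(1-\delta_n)|T_n|$, hence $|gT_n\triangle T_n|<2\delta_n|T_n|\to0$. Note that $e$ need not lie in $K$ for the intersection to be meaningful, but $kT_n$ all have size $|T_n|$, so the estimate still works. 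Therefore $\mu$ is $G$-invariant.

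Second I will pass the inequality to the limit. Upper semicontinuity plus boundedness of $w$ gives the Portmanteau-type inequality
\[
 \limsup_n\int w\,d\mu_n\le\int w\,d\mu .
\]
To see this, write $w$ as a decreasing pointwise limit of continuous functions $\varphi_k\ge w$; on a metrizable compact space a bounded usc function is an infimum of continuous ones (Baire). Then $\limsup_n\int w\,d\mu_n\le\int\varphi_k\,d\mu$ for each $k$, and monotone convergence finishes it. Hence $\int w\,d\mu\ge b$, which contradicts $\sup_{\mu\in\rPG(X)}\int w\,d\mu<b$.

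The only delicate point is this semicontinuity step, since $w$ (for instance $\rOrdD$, an integer-valued usc function built from closed sets) is not continuous. The approximation by continuous functions from above handles it. Everything else is routine F\o lner bookkeeping.
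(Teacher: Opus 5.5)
Your proposal is correct and follows essentially the same route as the paper. Both arguments run by contradiction with empirical measures $\mu_n$ along increasingly invariant sets $T_n$, pass to a weak-$*$ limit that is $G$-invariant, and apply the upper-semicontinuous Portmanteau inequality $\limsup_n\int w\,d\mu_n\le\int w\,d\mu$. The differences are cosmetic: you verify the invariance of the limit and the Portmanteau step explicitly, and you use the threshold $b-1/n$, whereas the paper takes $\ge b$ directly (also legitimate, since the usc orbit average attains its supremum on compact $X$).
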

\begin{proof}
Suppose, towards a contradiction, that the conclusion fails.  Choose an
increasing sequence of finite sets $K_n\subset G$ with
$e\in K_n$ and $\bigcup_nK_n=G$, and choose $\delta_n\downarrow0$.
For every $n$, there are a nonempty finite
$(K_n,\delta_n)$-invariant set $T_n\subset G$ and a point
$x_n\in X$ such that
\[
 \frac1{|T_n|}\sum_{t\in T_n}w(tx_n)\ge b.
\]
Define
\[
 \mu_n:=\frac1{|T_n|}\sum_{t\in T_n}\delta_{tx_n}\in\Prob(X).
\]
After passing to a subsequence, assume that $\mu_n\to\mu$ in the weak* topology.  Then $\mu\in \mathcal P_G(X)$.  Since $w$ is bounded and upper
semicontinuous, the Portmanteau theorem (see e.g. \cite[Theorem 2.1]{Billingsleybook1999}) gives
\[
 b
 \le
 \limsup_{n\to\infty}\int_Xw\,d\mu_n
 \le
 \int_Xw\,d\mu
 \le
 \sup_{\nu\in \mathcal P_G(X)}\int_Xw\,d\nu,
\]
contradicting the hypothesis.
\end{proof}

We now apply the preceding lemma to the function
$\rOrdD$.

\begin{corollary}
\label{cor:pou-window-control}
For the honest partition structure $\mathcal H=\hps$ obtained in
Lemma~\ref{lem:global-dynamic-pou}, there exist a finite set
$K_{\mathcal H}\subset G$, with $e\in K_{\mathcal H}$, and
$\delta_{\mathcal H}\in(0,1)$ such that every nonempty finite
$(K_{\mathcal H},\delta_{\mathcal H})$-invariant set $T\subset G$ satisfies
\[
 \sup_{x\in X}
 \sum_{t\in T}\rOrdD(tx)
 <a|T|.
\]
Moreover, for every such $T$, let
\[
 \rLocT{}
 :=
 \left\{
   \bigl(\rLocD{}(tx)\bigr)_{t\in T}:x\in X
 \right\},
\]
and define
\[
 L_T
 :=
 \bigcup_{(I_t)_{t\in T}\in\rLocT{}}
 \prod_{t\in T}\Delta_{I_t}.
\]
Then
\[
 \bfpsi^T(X)\subset L_T,
 \qquad
 \dim L_T<a|T|.
\]
\end{corollary}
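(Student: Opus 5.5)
We apply Lemma~\ref{lem:uniform-orbit-average} to $w:=\rOrdD$ with $b:=a$. Lemma~\ref{lem:global-dynamic-pou}(iii) gives $\sup_{\mu\in\rPG(X)}\int_X\rOrdD\,d\mu<a$, so the only hypothesis left to check is that $w$ is bounded and upper semicontinuous. Boundedness is clear, since $0\le\rOrdD\le J-1$. For upper semicontinuity, write $\rOrdD=\sum_{j=1}^J\mathbf 1_{D_j}-1$. Each $D_j$ is closed, so $\mathbf 1_{D_j}$ is upper semicontinuous, and a finite sum of upper semicontinuous functions is again upper semicontinuous. This is exactly where we use that $\mathcal D$ consists of \emph{closed} sets.

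Lemma~\ref{lem:uniform-orbit-average} then yields a finite set $K$ and $\delta>0$. Put $K_{\mathcal H}:=K\cup\{e\}$ and $\delta_{\mathcal H}:=\min\{\delta,1/2\}$. Enlarging $K$ and shrinking $\delta$ only strengthens the invariance requirement: a $(K_{\mathcal H},\delta_{\mathcal H})$-invariant set is $(K,\delta)$-invariant, because $\bigcap_{k\in K_{\mathcal H}}kT\subset\bigcap_{k\in K}kT$. Hence for every such $T$,
\[
\sup_{x\in X}\frac1{|T|}\sum_{t\in T}\rOrdD(tx)<a .
\]
Multiplying by $|T|$ gives the first claim.

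For the second claim, fix $x\in X$. For each $t\in T$, Subsection~\ref{subsec:ve} gives $\bfpsi(tx)\in\Delta_{\rLocD(tx)}$, so $\bfpsi^T(x)\in\prod_{t\in T}\Delta_{\rLocD(tx)}$. The index tuple $(\rLocD(tx))_{t\in T}$ belongs to $\rLocT$ by definition, hence $\bfpsi^T(X)\subset L_T$.

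For the dimension bound, note that $\rLocT$ is finite, being a subset of $(2^{\{1,\dots,J\}})^T$. Each product $\prod_{t\in T}\Delta_{I_t}$ is a product of simplices, hence semialgebraic. By Lemma~\ref{lem:basic-semialgebraic-facts}(ii) its dimension is $\sum_{t\in T}\dim\Delta_{I_t}=\sum_{t\in T}(|I_t|-1)$. For a tuple realized by some $x$ we have $|I_t|-1=\rOrdD(tx)$, so this dimension equals $\sum_{t\in T}\rOrdD(tx)<a|T|$. Taking the maximum over the finitely many pieces, Lemma~\ref{lem:basic-semialgebraic-facts}(i) gives $\dim L_T<a|T|$.

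There is no real obstacle here. The one point needing care is the upper semicontinuity of $\rOrdD$, which holds because the $D_j$ are closed rather than open.
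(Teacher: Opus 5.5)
Your proof is correct and follows the paper's argument essentially step for step: you apply Lemma~\ref{lem:uniform-orbit-average} to the bounded upper semicontinuous function $\rOrdD$ with $b=a$, adjust $K_{\mathcal H}$ and $\delta_{\mathcal H}$, and bound $\dim L_T$ by the maximum over the finitely many realized product faces. The only cosmetic difference is in the dimension count: you justify the product and finite-union steps through Lemma~\ref{lem:basic-semialgebraic-facts}(i)--(ii), whereas the paper treats $L_T$ as a finite union of faces of the polytope $(\Delta^{J-1})^T$; since semialgebraic and covering dimension agree for such compact polyhedra, either justification works.
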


\begin{proof}
The function
$
 \rOrdD(x)
 =
 \sum_{j=1}^J\mathbf 1_{D_j}(x)-1
$
is bounded and upper semicontinuous, since every $D_j$ is closed.
Lemma~\ref{lem:global-dynamic-pou} gives
\[
 \sup_{\mu\in  \rPG{}(X)}
 \int_X\rOrdD(x)\,d\mu(x)<a.
\]
Applying Lemma~\ref{lem:uniform-orbit-average} with
$w=\rOrdD$ and $b=a$, we obtain a finite set $K_{\mathcal H}\subset G$
and $\delta_{\mathcal H}>0$ such that for every nonempty finite
$(K_{\mathcal H},\delta_{\mathcal H})$-invariant set $T$,
\[
 \sup_{x\in X}
 \frac1{|T|}
 \sum_{t\in T}\rOrdD(tx)
 <a.
\]
Enlarging $K_{\mathcal H}$ and decreasing $\delta_{\mathcal H}$, if necessary, we
may assume that
$
 e\in K_{\mathcal H},
 $ and $
 \delta_{\mathcal H}\in(0,1).
$
This proves the first assertion.

Fix such a set $T$.  For $x\in X$ and $t\in T$, denote
$$I_t(x):=\rLocD{}(tx)=\{j:tx\in D_j\}.$$
Since
$
 \operatorname{supp}\psi_j\subset D_j,
$
we have
$
 \boldsymbol{\psi}(tx)\in\Delta_{I_t(x)}.
$
Therefore
\[
 \bfpsi^T(x)
 \in
 \prod_{t\in T}\Delta_{I_t(x)},
\]
and hence
$\bfpsi^T(X)\subset L_T$.
Since $\dim\Delta_{I_t(x)}=\rOrdD(tx)$, the dimension of the
corresponding product face is $\sum_{t\in T}\rOrdD(tx)$.

For every pattern
$(I_t)_{t\in T}\in\rLocT{}$, choose $x\in X$ which realizes
it.  Since $\prod_{t\in T}\Delta_{I_t}$ is a finite product of simplices,
its dimension is the sum of the dimensions of its factors.
Together with $\dim\Delta_{I_t} = \rOrdD(tx)$,
we have
\[
 \dim\left(
   \prod_{t\in T}\Delta_{I_t}
 \right)
 =
 \sum_{t\in T}\dim\Delta_{I_t} =
 \sum_{t\in T}\rOrdD(tx) <
 a|T|.
\]
Each product
$\prod_{t\in T}\Delta_{I_t}$ is a face of the finite polytope
$(\Delta^{J-1})^T$.  Hence $L_T$ is a finite union of faces of
this polytope and is contained in the finite subcomplex generated by
those faces.  Consequently,
\[
 \dim L_T
 =
 \max_{(I_t)\in\rLocT{}}
 \dim\left(
   \prod_{t\in T}\Delta_{I_t}
 \right)
 <
 a|T|.\qedhere
\]
\end{proof}
Thus the dynamical dimension hypothesis has been converted into a
uniform dimension bound on sufficiently invariant finite orbit windows.

\section{Recognition and generic embeddings}
\label{sec:global-approximation}
By the standard Baire category argument in order to prove Theorem \ref{thm:main}, it is enough to prove the following proposition.
\begin{proposition}\label{prop:eta-approximation}
Let $G$ be a countably infinite discrete amenable group acting freely
by homeomorphisms on a nonempty compact metrizable space $X$.
Suppose that the action has URPC and that $\mdim(X,G)<m/2$.
Then, for every $f_{\mathrm{orig}}\in C(X,[0,1]^m)$ and every
$\varepsilon,\eta>0$, there exists $q\in C(X,[0,1]^m)$ such that
\[
 \|q-f_{\mathrm{orig}}\|_\infty<\varepsilon
 \qquad\text{and}\qquad
 q^G \text{ is an }\eta\text{-embedding}.
\]
\end{proposition}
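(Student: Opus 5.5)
We will combine Lemma~\ref{lem:global-dynamic-pou}, a URPC witness from Proposition~\ref{prop:urpc-witness-equivalence}, and the finite-window localization (Proposition~\ref{prop:finite-family-localizer}, which we take as given here and prove in Section~\ref{sec:recognition-proof}).

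\emph{Step 1: coarse approximation.} First push $f_{\mathrm{orig}}$ slightly into the interior, obtaining $f_0\in C(X,(0,1)^m)$ with $\|f_0-f_{\mathrm{orig}}\|_\infty<\varepsilon/3$. Fix a finite open cover $\mathcal W$ of mesh $<\eta$ and choose $a$ with $\mdim(X,G)<a<m/2$. Lemma~\ref{lem:global-dynamic-pou} then gives an honest partition structure $\mathcal H$ and an affine observable $f_*=\sum_j v_j\psi_j$ with $\|f_*-f_0\|_\infty<\varepsilon/3$. Corollary~\ref{cor:pou-window-control} yields $K_{\mathcal H},\delta_{\mathcal H}$ such that for every sufficiently invariant window $T$ we have $\bfpsi^T(X)\subset L_T$ with $\dim L_T<a|T|$.

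\emph{Step 2: localization.} Using Lemma~\ref{lem:finite-invariance-propagation}, choose $(K,\alpha)$ so that each enlarged shape $AS_i$ is $(K_{\mathcal H},\delta_{\mathcal H})$-invariant and the colour count $\lfloor\alpha|S_i|\rfloor$ is negligible compared with the slack $(m-2a)|S_i|$. Take a $(K,\alpha)$-URPC witness $\mathfrak W$. From it, build the compact sets $K_\ell\Subset O_\ell$. These are tower-level bases, and the $U_j$ pulled back by the transporters $g_j$, so that translates of the $K_\ell$ cover $X$ and the corresponding translates of the $O_\ell$ lie in single levels or single $U_j$. Apply Proposition~\ref{prop:finite-family-localizer} to get a polynomial perturbation $q_1$ of $f_*$ with $\|q_1-f_*\|_\infty<\varepsilon/6$ satisfying \eqref{eq:intro-localization} for a finite window $F$. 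By compactness, this property persists with a positive margin $\kappa$ under any further perturbation of size $<\kappa$. Consequently, whenever $q^G(x)=q^G(y)$, the points $x,y$ lie in the same base $V_i$ at the same position, or have the same colour and transporter for the remainder.

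\emph{Step 3: separation within towers.} For each tower $(S_i,V_i)$, the block map $x\mapsto q^{S_i}(x)$ on $V_i$ factors through $\bfpsi^{AS_i}(x)\in L_{AS_i}$. Perturb it by polynomial (Veronese) coordinates of bounded degree. A semialgebraic dimension count shows that, for coefficients $\Theta$ outside a set of dimension less than the full parameter space, no two points of $L\times L$ off the diagonal share an image. The relevant bad set has dimension at most $2\dim L_{AS_i}-m|S_i|+(\text{parameters})$, which is less than the parameter count because $2a<m$; here we use Lemma~\ref{lem:basic-semialgebraic-facts}(v),(vi), the interpolation property \eqref{eq:interpolation}, and Proposition~\ref{prop:semialgebraic-rank-partition} for the rank stratification. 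The remainder is handled the same way by placing the coordinates on the levels $g_jU_j$ of distinct colours $c$; the disjointness in (U3) ensures these placements do not conflict. All coordinates are then glued with cutoffs into one observable $q$ with $\|q-q_1\|_\infty<\min(\kappa,\varepsilon/6)$.

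\emph{Step 4: conclusion.} If $q^G(x)=q^G(y)$, Step~2 puts $x,y$ in a common region and Step~3 gives $\bfpsi(x)=\bfpsi(y)$ at the relevant position. Lemma~\ref{lem:global-dynamic-pou}(iv) then puts them in a common member of $\mathcal W$, so $\rho(x,y)<\eta$.

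The main obstacle is Step~3: coupling the rank estimates with the dimension bound $\dim L_T<a|T|$ uniformly over pairs whose windows overlap or share $\bfpsi$-coordinates. Near-diagonal and shifted coincidences need the combinatorial rank estimate, and this is where polynomial rather than linear perturbations are required.
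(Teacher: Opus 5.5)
\textbf{Genuine gap in Step 2, which carries into Step 3.}

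Proposition~\ref{prop:finite-family-localizer} cannot be applied to the pairs you describe. Its hypotheses require a single open set $U$ with $(tU)_{t\in F}$ pairwise disjoint and $U\cap qU=\emptyset$ for $q\in\mathcal Q_F$. Every $O_\ell$ must lie inside this $U$, and the closures $\overline{O_\ell}$ must be pairwise disjoint.

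Tower bases and remainder sets violate these hypotheses. By (U3), $g_jU_j\subset V_{i(j)}$, so a transported remainder piece sits inside a base. Transported pieces of different colours may overlap each other. The untransported $U_j$ are arbitrary open sets that meet tower levels and each other, and they need not be $F$-free.

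These hypotheses are not cosmetic. They give the anchor rows the form \eqref{eq:localizer-anchor-row}, via Lemma~\ref{lem:anchor-synchronization}. That form yields $\rank M_\omega\ge|F|/2$, which absorbs the $+1$ and boundary terms in the dimension count.

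The missing idea is a second use of comparison:
\begin{enumerate}[label=\textup{(\alph*)},leftmargin=*]
\item Fix $F$ first.
\item Use the marker property to choose a sweep-out set $U$ that is free for $\{e\}\cup R^{-1}R\cup F^{-1}F\cup\mathcal Q_F$.
\item Only then take a witness whose shapes are also $(K_U,\theta_U)$-invariant and satisfy the copy budget \eqref{eq:routing-copy-budget}.
\item Lemma~\ref{lem:marker-comparison} now moves compact shrinkings of all bases $V_i^+$ and all remainder sets $U_j^+$ into $U$, with pairwise disjoint images.
\end{enumerate}
The routing pairs are these transported pieces (Proposition~\ref{lem:urpc-buffer-router}). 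Recognition at the translated points is what forces colliding points into a common $sV_i^+$ or $U_j^+$.

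\textbf{Step 3 rests on a false factorization.} The localizer contains the cutoff terms $b_{t,\ell}=\rho_\ell(t^{-1}\,\cdot)$. Hence $q_1^{S_i}$ factors through $(\bfpsi^{AS_i},\boldsymbol{\rho}^{F^{-1}S_i})$, not through $\bfpsi^{AS_i}$ alone. The $\boldsymbol{\rho}$-coordinates can contribute on the order of $|F^{-1}S_i|$ dimensions unless the sets $O_\ell$ are visited sparsely along orbits.

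The paper bounds the number of active $\boldsymbol{\rho}$-coordinates by $\kappa|S_i|$, for an arbitrarily small sparsity constant $\kappa$ (not your margin), using Lemma~\ref{lem:marker-sparsity}. That lemma needs the $O_\ell$ inside an $R$-free marker, with $|R|>(1+\varepsilon_R)/\kappa$ and $|RF^{-1}S_i|<(1+\varepsilon_R)|S_i|$. With bases and remainder sets as the $O_\ell$, there is no such sparsity.

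You cannot repair this by refining scales afterwards. The tolerance available in Step 3 depends on the recognition margin of $q_1$, which is fixed only after the witness. So the dimension bound must be independent of scale, as in Lemma~\ref{lem:scale-uniform-coordinates}.

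You have also placed the polynomial machinery in the wrong step. It is needed for localization, where one equivariant observable is evaluated on overlapping windows. In the tower step the levels are disjoint, so the block map can be perturbed non-equivariantly and glued by cutoffs on buffered bases. The paper does this with Lemma~\ref{lem:naryshkin-payload}, encoding a low-order, small-oscillation cover together with the transported-remainder family $\mathcal V_i$.
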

\begin{proof}[Proof of Theorem~\ref{thm:main} given Proposition \ref{prop:eta-approximation}]
For $\eta>0$, set
\[
 E_\eta
 :=
 \left\{
 f\in C(X,[0,1]^m):
 f^G(x)\ne f^G(y)
 \text{ whenever }\rho(x,y)\ge\eta
 \right\}.
\]
The set $E_\eta$ is open in the uniform topology.  Indeed, suppose
that $f_n\notin E_\eta$ and $f_n\to f$ uniformly.  Choose
$x_n,y_n\in X$ such that
\[
 \rho(x_n,y_n)\ge\eta
 \qquad\text{and}\qquad
 f_n^G(x_n)=f_n^G(y_n).
\]
After passing to a subsequence, we may assume that
\[
 x_n\to x
 \qquad\text{and}\qquad
 y_n\to y.
\]
Then $\rho(x,y)\ge\eta$.  For every $g\in G$,
\[
 f(gx)
 =
 \lim_{n\to\infty}f_n(gx_n)
 =
 \lim_{n\to\infty}f_n(gy_n)
 =
 f(gy),
\]
so $f^G(x)=f^G(y)$.  Hence $f\notin E_\eta$.

By Proposition~\ref{prop:eta-approximation}, $E_\eta$ is dense for
every $\eta>0$.

Finally,
\[
 \mathcal E_m(X,G)
 =
 \bigcap_{n=1}^\infty E_{1/n}.
\]
Since $C(X,[0,1]^m)$ is complete in the uniform metric, the Baire
category theorem implies that $\mathcal E_m(X,G)$ is a dense
$G_\delta$ subset of $C(X,[0,1]^m)$.
\end{proof}

It remains to prove
Proposition~\ref{prop:eta-approximation}. Our aim is to perturb the
given observable so that points with equal full orbit names\footnote{We call a pair of distinct points
with equal full orbit names a \textbf{collision}.} must
be less than $\eta$ apart. The construction has two main steps.
First, we arrange that equality of full orbit names forces the
two points into a common tower level or a common open set covering
part of the remainder. To achieve this, we state a perturbation
result which, for suitable compact--open pairs $K\subset O$,
ensures that a point in $K$ cannot have the same finite orbit name
as a point outside $O$. We then use URPC to construct the pairs
needed for the preceding conclusion.

Second, we perturb the observable further so that, within each
of these common sets, equality of full orbit names forces the
points to be less than $\eta$ apart. For this step, we construct
covers on which the relevant finite orbit maps have arbitrarily
small oscillation, with order controlled by the mean dimension
bound. These covers allow us to approximate the orbit maps on
tower bases by maps that distinguish the required small sets,
including the pieces of the remainder moved into the bases by
comparison. Cutoff functions combine these approximations into
a continuous global observable. The perturbation is small enough
to preserve the conclusion of the first step, and the two steps
together give an $\eta$-embedding. The choices of parameters and
the proof of the proposition are completed in
Subsection~\ref{subsec:parameter-assembly}.

\subsection[Recognition by a finite orbit name]{Recognition by a finite orbit name}
\label{subsec:recognition-statement}
For a finite subset $F\subset G$, denote
\[
 R_F=F^{-1}F,
 \qquad \mathcal Q_F=R_F^{-1}R_F\setminus\{e\}.
\]
\begin{proposition}
\label{prop:finite-family-localizer}
Fix $a<m/2$, an honest partition structure $\mathcal H=\hps$, vectors
$v_1,\ldots,v_J\in(0,1)^m$, and the affine observable
\[
 f_*(x):=\sum_{j=1}^Jv_j\psi_j(x).
\]
Let $A\subset G$ be finite, with $e\in A$, and assume that
\begin{equation}\label{eq:localizer-A-order}
 \sum_{c\in A}\rOrdD{}(cgx)<a|A|
 \qquad(g\in G,\ x\in X).
\end{equation}
Then there exists a finite set $F\subset G$, with $e\in F$, having
 the following property.

Let $U\subset X$ be open. Assume that the sets $tU$, $t\in F$,
are pairwise disjoint and that
\begin{equation}\label{eq:localizer-marker-separation}
 U\cap qU=\emptyset
 \qquad q\in\mathcal Q_F.
\end{equation}
Let $\Lambda\ne\emptyset$ be finite, and suppose that, for every
$\ell\in\Lambda$, $K_\ell\subset X$ is compact and
$O_\ell\subset X$ is open, with
$
 K_\ell\subset O_\ell\Subset  U,
$
and that the closures $\overline{O_\ell}$, $\ell\in\Lambda$, are
pairwise disjoint. Then, for every $\varepsilon>0$, there exists
$q\in C(X,[0,1]^m)$ such that
$\|q-f_*\|_\infty<\varepsilon$
and
\begin{equation}\label{eq:localizer-separation}
 q^F(K_\ell)\cap q^F(X\setminus O_\ell)=\emptyset
 \qquad(\ell\in\Lambda).
\end{equation}
\end{proposition}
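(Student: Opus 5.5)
The plan is to perturb $f_*$ only on the translates $tU$, $t\in F$, by a polynomial correction whose coefficients $\Theta$ are free and small. I then show by a semialgebraic dimension count that, for all $\Theta$ outside a set with nowhere dense closure, no collision between a point of $K_\ell$ and a point of $X\setminus O_\ell$ survives on the window $F$.

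\emph{Choice of $F$.} Since $a<m/2$, fix $\gamma>0$ with $2a+\gamma<m$. Take $F$ to be a disjoint union of right translates $Ag_1,\ldots,Ag_N$ with $e\in F$. By \eqref{eq:localizer-A-order} this gives
\[
\sum_{t\in F}\rOrdD(tx)<a|F|+a|A| \qquad\text{for all } x\in X.
\]
Choose $N$ so large that the finitely many bounded additive losses below, which depend only on $|A|$ and on the parameters $J$, $d$, and $|\Lambda|$, are at most $\gamma|F|$. As in Corollary~\ref{cor:pou-window-control}, $\bfpsi^F(X)$ lies in a finite union $L_F$ of product faces of $(\Delta^{J-1})^F$, and $\dim L_F<a|F|+O(1)$.

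\emph{The perturbation.} For each $\ell\in\Lambda$ choose a cutoff $\chi_\ell\in C(X,[0,1])$ with $\chi_\ell=1$ on $K_\ell$ and $\supp\chi_\ell\subset O_\ell$; these supports are pairwise disjoint. For $i\le m$ set
\[
q_\Theta(w)_i=f_*(w)_i+\sum_{\ell\in\Lambda}\sum_{t\in F}\chi_\ell(t^{-1}w)\,\bigl\langle\Phi_d\bigl(\bfpsi^{F}(t^{-1}w)\bigr),\theta^{(i)}_{\ell,t}\bigr\rangle .
\]
The terms are well defined because the sets $tU$ are pairwise disjoint. Because $f_*$ takes values in a compact subset of $(0,1)^m$, every sufficiently small $\Theta$ gives $q_\Theta\in C(X,[0,1]^m)$ with $\|q_\Theta-f_*\|_\infty<\varepsilon$.

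\emph{Structure of a collision window.} The key combinatorial observation uses \eqref{eq:localizer-marker-separation}. If $y\in sU\cap s'U$ with $s,s'\in R_F$, then $U\cap s^{-1}s'U\neq\emptyset$, so $s=s'$. Hence each $y$ lies in at most one translate $sU$ with $s\in R_F$. Consequently, for $x\in K_\ell$ and $y\notin O_\ell$, the window $q^F_\Theta(y)$ involves at most one shifted block of coefficients, those of index $s$, evaluated at the name $\bfpsi^F(s^{-1}y)$ and the cutoff values $\chi_{\ell'}(s^{-1}y)$. I would split into cases:
\begin{enumerate}[label=\textup{(\alph*)},leftmargin=*]
\item $y$ lies in no $sU$;
\item $s=e$ and $y\in U\setminus O_\ell$, possibly with $y\in O_{\ell'}$ for some $\ell'\ne\ell$; here $\chi_\ell(y)=0$, so $y$ sees different or no coefficient blocks;
\item $s\ne e$.
\end{enumerate}
In each case the collision equation $q_\Theta^F(x)=q_\Theta^F(y)$ is a polynomial system in three groups of variables: $\Theta$; the names $\bfpsi^F(x)$ and $\bfpsi^F(s^{-1}y)$, which lie in $L_F$; and finitely many cutoff values in $[0,1]$. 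This system is affine in $\Theta$.

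\emph{Dimension count.} The step I expect to be the main obstacle is showing that, at each fixed point of the name and cutoff space, the linear map $\Theta\mapsto q^F_\Theta(x)-q^F_\Theta(y)$ has rank $m|F|$, or at least $m|F|-O(1)$, whenever the two data are distinct. In cases (a) and (b) this should follow from the interpolation property \eqref{eq:interpolation}: the coordinate block of $x$ uses coefficients $\theta_{\ell,t}$ that $y$ either does not see or sees at a different point of the Veronese image. In case (c) the coefficients $\theta_{\ell,t}$ seen by $x$ at coordinate $t$ are seen by $y$ at coordinate $ts^{-1}$. I would order the coordinates $t\in F$ along a chain $t, ts^{-1}, ts^{-2},\ldots$ and run a triangular elimination. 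Freeness is used here: any chain leaves $F$, so the triangular system has no cycles. Interpolation with $d\ge2$ then handles the points where the two names coincide.

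Granting the rank bound, let $\mathcal B$ be the semialgebraic set of triples (data, cutoff values, $\Theta$) satisfying the collision equation. Each fibre over the data coordinates has dimension at most $\dim\Theta-m|F|+O(1)$. By Lemma~\ref{lem:basic-semialgebraic-facts}(v),
\[
\dim \mathcal B\le 2a|F|+O(1)+\dim\Theta-m|F|+O(1)<\dim\Theta .
\]
By Lemma~\ref{lem:basic-semialgebraic-facts}(iv),(vi), the projection of $\mathcal B$ to $\Theta$-space has nowhere dense closure. Choose $\Theta$ arbitrarily small outside the union of these finitely many projections. Then no collision occurs, which is exactly \eqref{eq:localizer-separation}. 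No separate closedness argument is needed, since $K_\ell$ and $X\setminus O_\ell$ are compact and the images are simply disjoint.
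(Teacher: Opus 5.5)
\textbf{The dimension count in case (c) fails.} Your cases (a) and (b) are fine, but case (c) has a genuine gap. Set $v:=s^{-1}y\in O_{\ell'}$ and $\beta:=\chi_{\ell'}(v)$. By your uniqueness observation, the collision equations read
\[
 f_*(tx)_i-f_*(ty)_i+\bigl\langle\Phi_d(\bfpsi^F(x)),\theta^{(i)}_{\ell,t}\bigr\rangle
 -\mathbf 1_{\{ts\in F\}}\,\beta\,\bigl\langle\Phi_d(\bfpsi^F(v)),\theta^{(i)}_{\ell',ts}\bigr\rangle=0,
 \qquad t\in F,\ 1\le i\le m .
\]
These involve $\bfpsi^F(x)$ and $\bfpsi^F(v)$, but also $f_*^F(y)$. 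The value $\bfpsi(ty)=\bfpsi(tsv)$ is a coordinate of $\bfpsi^F(v)$ only when $ts\in F$. So the data of a collision are $\bigl(\bfpsi^F(x),\bfpsi^{F\cup Fs}(v),\beta\bigr)$, not a point of $L_F\times L_F\times[0,1]$.

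Since $s$ ranges over all of $F^{-1}F$, the overlap $F\cap Fs$ can be a single element (e.g.\ $G=\mathbb Z$, $F=\{0,\dots,n-1\}$, $s=n-1$). The order hypothesis then bounds the dimension of this data set only by roughly $a|F|+a(2|F|-1)\approx 3a|F|$, and nothing in the hypotheses keeps it smaller. Meanwhile $\Theta\mapsto q^F_\Theta(x)-q^F_\Theta(y)$ has rank at most $m|F|$, because there are only $m|F|$ equations. Your fibre estimate therefore gives $\dim\mathcal B<\dim\Theta$ only when $3a<m$, not under $a<m/2$. The defect is built into the perturbation: the localized correction at $ty$ is a polynomial in the full $F$-name of the anchor point $s^{-1}y$. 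This imports a third, essentially independent, orbit name into the equations.

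\textbf{How the paper avoids this.} Its localized part is scalar. The anchor functions $b_{t,\ell}(z)=\rho_\ell(t^{-1}z)$ have constant coefficients, so a hit contributes only the single number $\beta=\rho_k(v)$. The polynomial part $\Phi_{d_V}(\bfpsi(cz))$, $c\in A$, is global and is evaluated at the name of $z$ itself. Hence the data are just the names of $x$ and $y$ on $H=AF$ together with $\beta$.

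The price is that the rank is no longer uniformly close to $m|F|$, because Veronese rows cancel wherever $x$ and $y$ carry equal names. The paper compensates by stratifying by the record $\omega$ and by rank:
\begin{itemize}
\item the state-space dimension is at most $2ar^\circ_\omega+2(J-1)b_F+1$ (Lemma~\ref{lem:state-rank-dimension});
\item the Veronese block has rank at least $r^\circ_\omega$ (Lemma~\ref{lem:finite-veronese}, which is why $d_V\ge2|H|-1$);
\item the anchor block has rank at least $|F|/2$ (Lemma~\ref{lem:anchor-rank}), which absorbs the boundary term and $\beta$.
\end{itemize}

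\textbf{Two smaller errors.} First, freeness of the action does not stop the chain $t,ts,ts^2,\dots$ from closing up inside $F$. That happens whenever $s$ has finite order, and amenable groups may have torsion. For example, take $s^2=e$ and $y=sx$; this is admissible, since $sU\cap U=\emptyset$ forces $y\notin O_\ell$. Then $v=x$ and $\beta=1$, and for each pair $t,ts\in F$ the two rows are negatives of each other. The rank in each output coordinate drops by the number of such pairs, so the total rank is only $m|F|/2$ if $F=Fs$. Thus no uniform bound $m|F|-O(1)$ holds. These cycles are exactly what Lemma~\ref{lem:anchor-rank} treats, and they can only be absorbed by a stratified count in which the data dimension drops as well.

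Second, $F$ must be fixed before $U$ and $\Lambda$ are given, so your choice of $N$ cannot depend on $|\Lambda|$.
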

 An observable supplied by Proposition~\ref{prop:finite-family-localizer}
is called a \textbf{(finite-window) localizer}. Equation \eqref{eq:localizer-separation} is referred to as the  \textbf{(finite-window) recognition property}. Proposition~\ref{prop:finite-family-localizer} is proved in
Section~\ref{sec:recognition-proof}, with the final construction
given in Subsection~\ref{subsec:recognition-construction}.
\begin{remark}\label{rem:prepared-localizer}
 The localizer $q_0$ in  Proposition~\ref{prop:finite-family-localizer} is of the following form. There are continuous functions
\[
 \rho_\ell:X\to[0,1],
 \qquad \ell\in\Lambda,
\]
with
\[
 \rho_\ell=1\text{ on }K_\ell,
 \qquad
 \supp\rho_\ell\subset O_\ell,
\]
as well as a continuous map
\[
 Q:(\Delta^{J-1})^A
   \times[0,1]^{F^{-1}\times\Lambda}
   \longrightarrow\mathbb R^m
\]
such that
\[
 q_0(z)
 =
 Q\bigl(
   \boldsymbol{\psi}^{A}(z),
   \boldsymbol{\rho}^{F^{-1}}(z)
 \bigr),
 \qquad z\in X,
\]
where
\[
 \boldsymbol{\rho}(z)
 :=
 (\rho_\ell(z))_{\ell\in\Lambda}.
\]
The support properties of these functions and the above factorization
will be used in Lemma~\ref{lem:scale-uniform-coordinates}.
\end{remark}

\subsection{Placing collisions in a common tower level or a remainder neighbourhood}

We follow Naryshkin's type-semigroup notation
\cite[Definition~3.7]{naryshkin2024}, a variant of Ma's
generalized type semigroup \cite{MaGeneralizedTypeSemigroup}
in which no further quotient by mutual subequivalence is taken.

Let $V_1,\ldots,V_r,W_1,\ldots,W_s\subset X$ be open sets, and let
$n_1,\ldots,n_r,m_1,\ldots,m_s\in\mathbb N_0$.
In the formal sums
$$
 \sum_{i=1}^r n_i[V_i]
 \qquad\text{and}\qquad
 \sum_{j=1}^s m_j[W_j],
$$
the coefficients specify the numbers of labelled copies of the
corresponding open sets. We call $(i,a)$, where
$1\le i\le r$ and $1\le a\le n_i$, a \textbf{source label},
and $(j,b)$, where $1\le j\le s$ and $1\le b\le m_j$,
a \textbf{target label}.

We write
$$
 \sum_{i=1}^r n_i[V_i]
 \prec
 \sum_{j=1}^s m_j[W_j]
$$
if, for every choice of compact sets $K_{i,a}\subset V_i$,
one for each source label $(i,a)$, there exist finite families
$\mathcal U_{i,a}$ of open subsets of $X$ satisfying
$$
 K_{i,a}\subset\bigcup_{U\in\mathcal U_{i,a}}U,
$$
together with, for each $U\in\mathcal U_{i,a}$, an element
$g_{i,a,U}\in G$ and a target label $\tau(i,a,U)$, such that:
\begin{enumerate}[label=\textup{(\roman*)},leftmargin=*]
\item
if $\tau(i,a,U)=(j,b)$, then $g_{i,a,U}U\subset W_j$;
\item
for each fixed target label $(j,b)$, the translated sets
$g_{i,a,U}U$, as $(i,a,U)$ ranges over all triples satisfying
$\tau(i,a,U)=(j,b)$, are pairwise disjoint.
\end{enumerate}

We shall use without further comment that this relation is
additive and transitive, and that
$$
 n_i\le m_i\ \text{for all }i
 \quad\Longrightarrow\quad
 \sum_i n_i[V_i]\prec\sum_i m_i[V_i].
$$
\begin{lemma}\cite[Lemma~3.19]{naryshkin2024}
\label{lem:marker-comparison}
Let $G$ be a countably infinite discrete amenable group acting freely
by homeomorphisms on a nonempty compact metrizable space $X$.
Let $U\subset X$ be a sweep-out set.  Then there exist
$
 \theta_U>0
$  and a finite set 
 $K_U\subset G
$
such that  if
$
 \{(S_i,V_i)\}_{i=1}^r
$
is an open castle and every shape $S_i$ is
$(K_U,\theta_U)$-invariant, then
\[
 \sum_{i=1}^r
 \left\lceil\theta_U|S_i|\right\rceil [V_i]
 \prec [U].
\]
\end{lemma}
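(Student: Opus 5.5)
The plan is to realize the subequivalence by moving points inside their own towers, so that disjointness comes for free from the disjointness of the castle levels. Since $U$ is a sweep-out set, fix a finite $M\subset G$ with $MU=X$. For $x\in X$ and any $c\in G$ we have $cx\in MU$, so $m^{-1}cx\in U$ for some $m\in M$. Thus every ``block'' $M^{-1}c$ contains an element that moves $x$ into $U$.

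First, set
\[
K_U:=M\cup M^{-1}M\cup\{e\},\qquad \theta_U:=\frac1{4|M|^2}
\]
(any sufficiently small multiple of $|M|^{-2}$ should do). Let $S$ be $(K_U,\theta_U)$-invariant, and put $S'=\bigcap_{k\in K_U}kS$, so that $|S'|>(1-\theta_U)|S|$. Every $s\in S'$ satisfies $K_U^{-1}s\subset S$; in particular $M^{-1}s\subset S$. Moreover, if $S'\neq\emptyset$ then $|S|\ge|K_U|$, so $|S|$ is not tiny.

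Next, choose a maximal set $C\subset S'$ such that the blocks $M^{-1}c$, $c\in C$, are pairwise disjoint. By maximality, $S'\subset\bigcup_{c\in C}M^{-1}Mc$. Hence
\[
|C|\ge \frac{|S'|}{|M|^2}\ge\frac{(1-\theta_U)|S|}{|M|^2}\ge \lceil\theta_U|S|\rceil
\]
for our choice of $\theta_U$. The last inequality needs $|S|$ to be large compared with $|M|^2$ to absorb the ceiling; I would arrange this by enlarging $K_U$ to have at least $4|M|^2$ elements, using that $G$ is infinite. This counting step, and making the ceiling harmless, is the only place I expect any real care to be needed.

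Now let the castle $\{(S_i,V_i)\}_{i=1}^r$ have $(K_U,\theta_U)$-invariant shapes. For each $i$, take distinct centers $c_{i,1},\dots,c_{i,n_i}\in S_i'$ as above, with $n_i=\lceil\theta_U|S_i|\rceil$ and pairwise disjoint blocks $M^{-1}c_{i,a}\subset S_i$. Given compact sets $K_{i,a}\subset V_i$, define for $m\in M$ the open sets
\[
W_{i,a,m}:=\{x\in V_i:\ m^{-1}c_{i,a}x\in U\}.
\]
These cover $V_i\supset K_{i,a}$ by the sweep-out property. Take $\mathcal U_{i,a}=\{W_{i,a,m}:m\in M\}$, transporters $g_{i,a,W_{i,a,m}}=m^{-1}c_{i,a}$, and the single target label $(1,1)$. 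Condition (i) of the definition holds by construction.

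For (ii), the translate $m^{-1}c_{i,a}W_{i,a,m}$ lies in the level $m^{-1}c_{i,a}V_i$ of the $i$-th tower. Distinct triples $(i,a,m)$ give distinct pairs $(i,\,m^{-1}c_{i,a})$: within a fixed block the elements $m^{-1}c_{i,a}$ are distinct because right multiplication is injective, and different blocks are disjoint. Since distinct levels of an open castle are pairwise disjoint, all these translates are pairwise disjoint subsets of $U$. This gives
\[
\sum_i\lceil\theta_U|S_i|\rceil[V_i]\prec[U].
\]
Note that freeness is not even needed beyond what is implicit in the castle.
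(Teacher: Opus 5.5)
Your proof is correct. The paper does not prove this lemma; it imports it from Naryshkin \cite[Lemma~3.19]{naryshkin2024}. Your argument is therefore a self-contained substitute rather than a reproduction of an argument in the paper. You pack pairwise disjoint blocks $M^{-1}c$ into the invariant core $S'$ of each shape. Each block pushes a whole copy of $V_i$ into $U$ through distinct castle levels $m^{-1}cV_i$, and those levels are automatically disjoint.

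Two small fixes are needed:
\begin{enumerate}
\item Maximality of $C$ gives $m_1^{-1}s=m_2^{-1}c$, so $s\in MM^{-1}c$, not $M^{-1}Mc$. The count is unaffected, since $|MM^{-1}|\le|M|^2$.
\item Index $\mathcal U_{i,a}$ by $m\in M$, or keep one $m$ for each distinct set $W_{i,a,m}$, so that every member has a well-defined transporter.
\end{enumerate}

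Your handling of the ceiling is sound. For $s\in S'$ you have $K_U^{-1}s\subset S$, so $|S|\ge|K_U|\ge 4|M|^2$, and $|S|\ge 2|M|^2$ already suffices. As you observe, neither freeness nor amenability is used. Your covers also handle all of $V_i$, independently of the compact sets $K_{i,a}$.
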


The next proposition is extracted from and adapted to the present finite-window setting from the mechanism in Naryshkin's proof of the shift-embedding theorem \cite[Subsection 6.1]{naryshkin2024}.
\begin{proposition}
\label{lem:urpc-buffer-router}
Let $U\subset X$ be a sweep-out set, and let $\theta_U>0$
and the finite set $K_U\subset G$ be supplied by
Lemma~\ref{lem:marker-comparison}.
Fix finite sets $F,K\subset G$, a number $\alpha>0$, and a
$(K,\alpha)$-URPC witness
$$
 \mathfrak W^0
 =
 \left(
  \{S_i\}_{i=1}^r,
  \{V_i^0\}_{i=1}^r,
  \{g_j\}_{j=1}^N,
  \{U_j^0\}_{j=1}^N
 \right).
$$
Here $S_i$ and $V_i^0$ are the tower shapes and bases,
$g_j$ are the transporters, and $U_j^0$ are the open sets
covering the remainder. Let
$$
 J_{i,c},
 \qquad
 1\le i\le r,\quad
 1\le c\le\lfloor\alpha|S_i|\rfloor,
$$
be the partition of $\{1,\ldots,N\}$ associated with this witness.
For each $j$, write $i(j)$ and $c(j)$ for the unique indices
such that $j\in J_{i(j),c(j)}$.

Assume that every $S_i$ is also $(K_U,\theta_U)$-invariant and that
\begin{equation}\label{eq:routing-copy-budget}
 1+\lfloor\alpha|S_i|\rfloor
 \le \left\lceil\theta_U|S_i|\right\rceil
 \qquad(1\le i\le r).
\end{equation}
Then there exist open sets
\footnote{The sets $B_i$ will be used to construct the cutoff
functions in Subsection~\ref{subsec:global-perturbation}.}
\begin{equation}\label{eq:routing-nested-sets}
 V_i^-\Subset V_i^+\Subset B_i\Subset V_i^0
 \qquad(1\le i\le r),
\end{equation}
and
$$
 U_j^-\Subset U_j^+\Subset U_j^0
 \qquad(1\le j\le N),
$$
such that
$$
 \mathfrak W^\pm
 :=
 \left(
  \{S_i\}_{i=1}^r,
  \{V_i^\pm\}_{i=1}^r,
  \{g_j\}_{j=1}^N,
  \{U_j^\pm\}_{j=1}^N
 \right)
$$
are both $(K,\alpha)$-URPC witnesses with the same shapes,
transporters, and index partition as $\mathfrak W^0$.
Moreover:
\begin{enumerate}[label=\textup{(\roman*)},leftmargin=*]
\item
For every $1\le j\le N$,
\begin{equation}\label{eq:routing-transport-buffer}
 g_j\overline{U_j^+}\subset V_{i(j)}^+.
\end{equation}

\item
There exist a nonempty finite index set $\Lambda$, compact sets
$K_\ell\subset X$, and open sets $O_\ell\subset X$, such that
\begin{equation}\label{eq:routing-localizer-family}
 K_\ell\subset O_\ell\Subset U
 \qquad(\ell\in\Lambda),
\end{equation}
and the closures $\overline{O_\ell}$ are pairwise disjoint.
If $q\in C(X,[0,1]^m)$ satisfies the finite-window recognition
property
\begin{equation}\label{eq:routing-localizer-condition}
 q^F(K_\ell)\cap q^F(X\setminus O_\ell)=\emptyset
 \qquad(\ell\in\Lambda),
\end{equation}
then any two points $x,y\in X$ with $q^G(x)=q^G(y)$ belong
to a common member of the finite open cover
\begin{equation}\label{eq:routing-plus-cover}
 \mathcal C^+
 :=
 \{sV_i^+:1\le i\le r,\ s\in S_i\}
 \cup
 \{U_j^+:1\le j\le N\}.
\end{equation}
\end{enumerate}
\end{proposition}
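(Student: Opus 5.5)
The plan is to use the comparison supplied by Lemma~\ref{lem:marker-comparison} to route labelled copies of the tower bases into $U$. There will be one ``level'' copy of each $V_i$ and one copy for each colour $c$. The compact and open pieces that land in $U$ serve as the pairs $K_\ell\subset O_\ell$. Recognition on these pieces, transported back along the orbit, pins down the tower level (or remainder set) of a collision partner. The window $F$ plays no role beyond appearing in \eqref{eq:routing-localizer-condition}. This is because $q^G(x)=q^G(y)$ implies $q^F(hx)=q^F(hy)$ for every $h\in G$.

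\emph{Step 1: shrinkings.} The remainder $R^0$ of the castle $\{(S_i,V_i^0)\}$ is compact and covered by the $U_j^0$. First I choose $U_j^-\Subset U_j^0$ still covering $R^0$. Then $X\setminus\bigcup_jU_j^-$ is a compact subset of $\bigsqcup_i S_iV_i^0$, so I can choose $V_i^-\Subset V_i^0$ whose levels $sV_i^-$ cover it. Hence the levels $sV_i^-$ and the sets $U_j^-$ together cover $X$, so $\mathfrak W^-$ satisfies (U2). Next I pick $U_j^-\Subset U_j^+\Subset U_j^0$ arbitrarily. Since $g_j\overline{U_j^+}\subset g_jU_j^0\subset V_{i(j)}^0$ is compact, I can choose $V_i^+\Subset V_i^0$ containing $\overline{V_i^-}$ and every $g_j\overline{U_j^+}$ with $i(j)=i$. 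This gives \eqref{eq:routing-transport-buffer}. Finally I insert $B_i$ with $V_i^+\Subset B_i\Subset V_i^0$. Properties (U1) and (U3) for $\mathfrak W^\pm$ are inherited from $\mathfrak W^0$ by monotonicity: the levels remain disjoint, and $g_jU_j^\pm\subset g_jU_j^0$. Property (U2) for $\mathfrak W^+$ follows from the one for $\mathfrak W^-$, since every set only grew.

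\emph{Step 2: routing.} By \eqref{eq:routing-copy-budget} and Lemma~\ref{lem:marker-comparison},
\[
 \sum_{i=1}^r\bigl(1+\lfloor\alpha|S_i|\rfloor\bigr)[V_i^0]\prec[U].
\]
I apply the definition of $\prec$ with the following compact sets. For the label $(i,0)$ I use $\overline{V_i^-}$. For the label $(i,c)$ I use $\bigsqcup_{j\in J_{i,c}}g_j\overline{U_j^-}$. This is a disjoint union of compact sets, since the sets $g_jU_j^0$, $j\in J_{i,c}$, are pairwise disjoint. The definition yields finite families of open sets $W$ covering these compacts, together with elements $h_W$, such that the translates $h_WW\subset U$ are pairwise disjoint. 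I first intersect each $W$ with $V_i^+$ for the label $(i,0)$, or with $g_jU_j^+$ for the colour labels, after splitting by $j$. Then I shrink the cover to compact pieces $C_W\subset W$ still covering the compact set. I put $K_\ell:=h_WC_W$, indexed by $\ell=(i,c,W)$ (and by $j$ for colour labels). These sets are pairwise disjoint compacts, each lying inside the open set $h_WW$. By normality I choose open $O_\ell$ with $K_\ell\subset O_\ell\Subset h_WW$ and pairwise disjoint closures. Then \eqref{eq:routing-localizer-family} holds.

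\emph{Step 3: the collision argument.} Let $q^G(x)=q^G(y)$. By Step~1, either $x\in sV_i^-$ or $x\in U_j^-$.
\begin{itemize}
\item Suppose $x\in sV_i^-$. Then $s^{-1}x\in C_W$ for some $W$ of label $(i,0)$, so $h_Ws^{-1}x\in K_\ell$. Since $q^F(h_Ws^{-1}x)=q^F(h_Ws^{-1}y)$, the recognition property \eqref{eq:routing-localizer-condition} forces $h_Ws^{-1}y\in O_\ell\subset h_W(W\cap V_i^+)$. Hence $x,y\in sV_i^+$.
\item Suppose $x\in U_j^-$. Then $g_jx\in C_W$ for some $W$ of label $(i(j),c(j))$ attached to $j$. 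The same reasoning gives $g_jy\in W\subset g_jU_j^+$, hence $y\in U_j^+$.
\end{itemize}
In both cases $x$ and $y$ lie in a common member of $\mathcal C^+$.

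The main obstacle I expect is the bookkeeping in Step~2. The pieces must be sliced so that each $O_\ell$ is contained in the translate of a \emph{single} $V_i^+$ or a single $g_jU_j^+$. The closures $\overline{O_\ell}$ must be pairwise disjoint across all labels, not merely within one target copy. This is why I cut the covers down to the plus-sets and pass to compact shrinkings before thickening. The comparison only guarantees disjointness of the open translates, and the buffer between the minus and plus sets is what supplies room for this.
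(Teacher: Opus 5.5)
Your overall strategy is the paper's. You shrink the witness to nested copies, use Lemma~\ref{lem:marker-comparison} to route compact pieces of the bases and of the transported remainder sets into $U$ with pairwise disjoint translates, thicken these pieces to buffers with disjoint closures, and transport the recognition property along orbits. Your Step~3 is essentially the paper's collision argument. Your Step~2 is, if anything, slightly more self-contained. You feed the compact set $\bigsqcup_{j\in J_{i,c}}g_j\overline{U_j^-}$ directly to the colour copy $(i,c)$ of $[V_i^0]$ and split by $j$ afterwards. This avoids the transitivity of $\prec$ that the paper uses to pass through \eqref{eq:routing-remainder-comparison}. Applying the lemma to the castle with bases $V_i^0$ instead of $V_i^+$ is equally legitimate.

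There is one incorrect verification, in Step~1. Condition (U3) requires disjointness within each colour, and also the inclusion $g_jU_j\subset V_{i(j)}$. For $\mathfrak W^-$ this reads $g_jU_j^-\subset V_{i(j)}^-$. This is not inherited by monotonicity, because passing to $V_i^-$ shrinks the target as well. Your $V_i^-$ was chosen only so that its levels cover $X\setminus\bigcup_jU_j^-$, and nothing forces it to contain $g_jU_j^-$. For $\mathfrak W^+$ you built the inclusion in by hand, which is why it holds there. As written, you have not shown that $\mathfrak W^-$ is a URPC witness, which is part of the statement.

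The repair is immediate. When choosing $V_i^-\Subset V_i^0$, require it also to contain the compact sets $g_j\overline{U_j^-}$ with $i(j)=i$; these lie in $g_jU_j^0\subset V_i^0$. This is what the paper does, by applying to $\mathfrak W^+$ the same construction that produced $\mathfrak W^+$. Nothing else in your argument uses this inclusion, so Steps~2 and~3 are unaffected.

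You should also record that $\Lambda\ne\emptyset$. The sets $sV_i^-$ and $U_j^-$ cover the nonempty space $X$, so at least one source compact set is nonempty and at least one piece is produced.
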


Each pair $(K_\ell,O_\ell)$ in
Proposition~\ref{lem:urpc-buffer-router} is called a
\textbf{routing pair}, and $O_\ell$ is its open \textbf{buffer}.
We call the witnesses $\mathfrak W^-$ and $\mathfrak W^+$,
the intermediate bases $B_i$, and the routing pairs the
\textbf{buffered URPC data} associated with $\mathfrak W^0$.

\begin{proof}
We first construct the nested witnesses and the intermediate
bases. Since $\mathfrak W^0$ is a URPC witness, its tower levels
and remainder sets form a finite open cover of $X$.
Choose compact sets
$$
 E_{i,s}\subset sV_i^0,
 \qquad
 H_j\subset U_j^0,
$$
whose union is $X$. For each $j$, choose an open set $U_j^+$
such that
$$
 H_j\subset U_j^+\Subset U_j^0.
$$
For each $i$, the compact set
$$
 \bigcup_{s\in S_i}s^{-1}E_{i,s}
 \;\cup\!
 \bigcup_{\{j:i(j)=i\}}g_j\overline{U_j^+}
$$
is contained in $V_i^0$. Choose an open neighbourhood $V_i^+$
of this compact set with $V_i^+\Subset V_i^0$.

The sets $sV_i^+$ and $U_j^+$ still cover $X$, since they
contain $E_{i,s}$ and $H_j$, respectively. The castle
disjointness is inherited from $\mathfrak W^0$, and
$$
 g_j\overline{U_j^+}\subset V_{i(j)}^+.
$$
For each fixed $(i,c)$, the transported closures
$g_j\overline{U_j^+}$, $j\in J_{i,c}$, are pairwise disjoint,
since they lie in the pairwise disjoint sets $g_jU_j^0$.
Thus $\mathfrak W^+$ is a $(K,\alpha)$-URPC witness with the
original shapes, transporters, and index partition.

Apply the same construction to $\mathfrak W^+$ to obtain
$\mathfrak W^-$, with
$$
 V_i^-\Subset V_i^+,
 \qquad
 U_j^-\Subset U_j^+.
$$
Finally, choose open sets $B_i$ satisfying
$$
 V_i^+\Subset B_i\Subset V_i^0.
$$

We next use comparison to move pieces of all the bases $V_i^+$
and remainder sets $U_j^+$ into $U$, with disjoint images.
For each fixed $(i,c)$, the sets $g_jU_j^+$,
$j\in J_{i,c}$, are pairwise disjoint subsets of $V_i^+$.
Therefore
\begin{equation}\label{eq:routing-remainder-comparison}
 \sum_{j=1}^N[U_j^+]
 \prec
 \sum_{i=1}^r
 \lfloor\alpha|S_i|\rfloor[V_i^+].
\end{equation}
By additivity, \eqref{eq:routing-copy-budget}, and
Lemma~\ref{lem:marker-comparison},
$$
\begin{aligned}
 \sum_{i=1}^r[V_i^+]+\sum_{j=1}^N[U_j^+]
 &\prec
 \sum_{i=1}^r
 \bigl(1+\lfloor\alpha|S_i|\rfloor\bigr)[V_i^+]\\
 &\prec
 \sum_{i=1}^r
 \left\lceil\theta_U|S_i|\right\rceil[V_i^+]
 \prec [U].
\end{aligned}
$$
The last comparison applies because
$\{(S_i,V_i^+)\}_{i=1}^r$ is an open castle with
$(K_U,\theta_U)$-invariant shapes. In particular,
\begin{equation}\label{eq:routing-total-comparison}
 \sum_{i=1}^r[V_i^+]+\sum_{j=1}^N[U_j^+]\prec[U].
\end{equation}

Since $\mathfrak W^-$ is a URPC witness, choose compact sets
$$
 C_{i,s}\subset sV_i^-,
 \qquad
 D_j\subset U_j^-,
$$
such that
\begin{equation}\label{eq:routing-closed-shrinking}
 X=
 \bigcup_{i=1}^r\bigcup_{s\in S_i}C_{i,s}
 \cup
 \bigcup_{j=1}^ND_j.
\end{equation}
For each $i$, put
$$
 C_i:=\bigcup_{s\in S_i}s^{-1}C_{i,s}.
$$
Then $C_i$ is compact and contained in $V_i^-$.

Apply \eqref{eq:routing-total-comparison} to the compact source
sets $C_i\subset V_i^+$ and $D_j\subset U_j^+$.
Intersect the implementing open sets with their respective
source sets $V_i^+$ or $U_j^+$. By compactness, shrink these
open sets while retaining covers of $C_i$ and $D_j$, and then
choose compact shrinkings of those covers.
This gives integers $L_i,M_j\ge0$, compact sets
$A_{i,a}^V,A_{j,b}^U$, open sets $P_{i,a}^V,P_{j,b}^U$,
and elements $r_{i,a}^V,r_{j,b}^U\in G$, satisfying
$$
\begin{gathered}
 C_i\subset\bigcup_{a=1}^{L_i}A_{i,a}^V,
 \qquad
 A_{i,a}^V\subset P_{i,a}^V\Subset V_i^+,
 \qquad
 1\le i\le r,\quad 1\le a\le L_i,\\
 D_j\subset\bigcup_{b=1}^{M_j}A_{j,b}^U,
 \qquad
 A_{j,b}^U\subset P_{j,b}^U\Subset U_j^+,
 \qquad
 1\le j\le N,\quad 1\le b\le M_j,
\end{gathered}
$$
such that all the sets
$$
 r_{i,a}^V\overline{P_{i,a}^V}
 \quad\text{and}\quad
 r_{j,b}^U\overline{P_{j,b}^U}
$$
form one pairwise disjoint family of compact subsets of $U$.

Define
$$
\begin{aligned}
 \Lambda
 &:=
 \{(V,i,a):1\le i\le r,\ 1\le a\le L_i\}\\
 &\qquad\sqcup
 \{(U,j,b):1\le j\le N,\ 1\le b\le M_j\}.
\end{aligned}
$$
For $\ell=(V,i,a)$, set
$$
 K_\ell:=r_{i,a}^VA_{i,a}^V,
 \qquad
 O_\ell:=r_{i,a}^VP_{i,a}^V,
$$
and for $\ell=(U,j,b)$, set
$$
 K_\ell:=r_{j,b}^UA_{j,b}^U,
 \qquad
 O_\ell:=r_{j,b}^UP_{j,b}^U.
$$
These pairs satisfy $K_\ell\subset O_\ell\Subset U$, and the
closures $\overline{O_\ell}$ are pairwise disjoint.
Since the sets $C_{i,s}$ and $D_j$ cover the nonempty space $X$,
at least one $C_i$ or $D_j$ is nonempty. Hence at least one
$L_i$ or $M_j$ is positive, so $\Lambda\ne\emptyset$.

It remains to verify the conclusion concerning equal orbit
names. Suppose that $q$ satisfies the finite-window recognition
property~\eqref{eq:routing-localizer-condition} and that
$q^G(x)=q^G(y)$. Then
$$
 q^G(hx)=q^G(hy)\qquad(h\in G),
$$
because $q(ghx)=q(ghy)$ for every $g,h\in G$.

If $x\in C_{i,s}$, put $u=s^{-1}x$ and $v=s^{-1}y$.
Choose $a$ such that $u\in A_{i,a}^V$, and let
$\ell=(V,i,a)$. Then
$$
 r_{i,a}^Vu\in K_\ell,
 \qquad
 q^F(r_{i,a}^Vu)=q^F(r_{i,a}^Vv).
$$
The finite-window recognition
property~\eqref{eq:routing-localizer-condition} implies
$r_{i,a}^Vv\in O_\ell$, and hence $v\in P_{i,a}^V$.
Since
$$
 u\in A_{i,a}^V\subset P_{i,a}^V\subset V_i^+,
$$
we obtain $u,v\in V_i^+$, and therefore $x,y\in sV_i^+$.

If $x\in D_j$, choose $b$ such that $x\in A_{j,b}^U$,
and let $\ell=(U,j,b)$. Then
$$
 r_{j,b}^Ux\in K_\ell,
 \qquad
 q^F(r_{j,b}^Ux)=q^F(r_{j,b}^Uy).
$$
The finite-window recognition
property~\eqref{eq:routing-localizer-condition} gives
$r_{j,b}^Uy\in O_\ell$, so $y\in P_{j,b}^U$.
Together with
$$
 x\in A_{j,b}^U\subset P_{j,b}^U\subset U_j^+,
$$
this yields $x,y\in U_j^+$.

The compact sets $C_{i,s}$ and $D_j$ cover $X$ by
\eqref{eq:routing-closed-shrinking}, so these cases exhaust all
possibilities. Thus $x$ and $y$ belong to a common member
of $\mathcal C^+$.
\end{proof}

\subsection[Covers of controlled order and small oscillation]{Covers of controlled order and small oscillation}

Fix an observable $q_0$ with the representation recorded in Remark \ref{rem:prepared-localizer}. For a sufficiently invariant
finite set $S$, we will construct covers on which $q_0^S$ has arbitrarily small
oscillation, while their order stays bounded linearly in $|S|$.

\begin{lemma}\label{lem:marker-sparsity}
Let $F,R\subset G$ be finite, with $R\ne\emptyset$, and let
$U\subset X$ be open.  Suppose that the sets $rU$, $r\in R$, are
pairwise disjoint.  Let $\kappa,\varepsilon_R>0$ satisfy
$(1+\varepsilon_R)/|R|<\kappa$.  If a nonempty finite set
$S\subset G$ satisfies $|RF^{-1}S|<(1+\varepsilon_R)|S|$, then,
for every $x\in X$,
\begin{equation}\label{eq:marker-sparse-visits}
 \bigl|\{u\in F^{-1}S:ux\in U\}\bigr|<\kappa|S|.
\end{equation}
\end{lemma}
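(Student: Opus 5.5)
Fix $x\in X$ and let $P:=\{u\in F^{-1}S:ux\in U\}$. The plan is a double-counting argument: inflate $P$ by $R$ and show that no collisions occur, so that $|R|\,|P|$ is bounded by the size of $RF^{-1}S$. The hypothesis $|RF^{-1}S|<(1+\varepsilon_R)|S|$ then yields the claim.

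First, I will show that the map $R\times P\to RF^{-1}S$, $(r,u)\mapsto ru$, is injective. Suppose $ru=r'u'$ with $r,r'\in R$ and $u,u'\in P$. Then
\[
 ru\,x=r'u'\,x\in rU\cap r'U,
\]
because $ux\in U$ and $u'x\in U$. The translates $rU$, $r\in R$, are pairwise disjoint, so $r=r'$. Cancelling $r$ in $G$ gives $u=u'$. Note that this argument uses only group cancellation and disjointness; freeness of the action is not needed.

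Next, the image of this map lies in $RF^{-1}S$, since $u\in F^{-1}S$. Injectivity therefore gives
\[
 |R|\,|P|=|RP|\le|RF^{-1}S|<(1+\varepsilon_R)|S|.
\]
Dividing by $|R|$ and using $(1+\varepsilon_R)/|R|<\kappa$, we obtain
\[
 |P|<\kappa|S|,
\]
which is \eqref{eq:marker-sparse-visits}.

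I do not expect any serious obstacle. The only point requiring care is that disjointness of the translates $rU$ forces distinct products $ru$ and $r'u'$ whenever $(r,u)\neq(r',u')$; this is the injectivity step above. The remainder of the argument is counting.
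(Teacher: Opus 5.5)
Your proposal is correct and follows essentially the same route as the paper's proof: the multiplication map $R\times\{u\in F^{-1}S:ux\in U\}\to RF^{-1}S$ is injective by the pairwise disjointness of the translates $rU$, so $|R|$ times the number of visits is at most $|RF^{-1}S|<(1+\varepsilon_R)|S|$. Dividing by $|R|$ and using $(1+\varepsilon_R)/|R|<\kappa$ finishes the argument.
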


\begin{proof}
Fix $x\in X$ and put
$$
 T_x:=\{u\in F^{-1}S:ux\in U\}.
$$
We claim that the multiplication map
$$
 R\times T_x\longrightarrow RF^{-1}S,
 \qquad (r,u)\longmapsto ru,
$$
is injective. Indeed, suppose that
$
 r_1u_1=r_2u_2
$
for $r_1,r_2\in R$ and $u_1,u_2\in T_x$.  Since
$u_1x,u_2x\in U$, we have
$r_1u_1x=r_2u_2x\in r_1U\cap r_2U$.
The translates $rU$, $r\in R$, are pairwise disjoint, so
$r_1=r_2$.  As $r_1u_1=r_2u_2$, we conclude $u_1=u_2$.

Hence
$|R||T_x| \le |RF^{-1}S|$.
Using the two hypotheses above, we obtain
$|T_x| < \frac{1+\varepsilon_R}{|R|}|S| < \kappa|S|$.
This proves \eqref{eq:marker-sparse-visits}.
\end{proof}

Fix $a$ with $\mdim(X,G)<a<m/2$ and a finite open cover
$\mathcal W$ of $X$. Let
$$
 \mathcal H=\hps,
 \qquad
 v_1,\ldots,v_J\in(0,1)^m,
$$
be supplied by Lemma~\ref{lem:global-dynamic-pou}, and put
$$
 f_*(z):=\sum_{j=1}^Jv_j\psi_j(z).
$$
Let $K_{\mathcal H}\subset G$ and
$\delta_{\mathcal H}\in(0,1)$ be the corresponding finite-invariance
data from Corollary~\ref{cor:pou-window-control}.

Fix a finite set $A\subset G$ containing $e$ and satisfying
\eqref{eq:localizer-A-order}, and let $F\subset G$ be supplied by
Proposition~\ref{prop:finite-family-localizer}.
Let $U\subset X$ be an open sweep-out set, and let
$(K_\ell,O_\ell)_{\ell\in\Lambda}$ be a nonempty finite family
satisfying the hypotheses of that proposition.
Fix a corresponding localizer $q_0$, together with the continuous
functions $\rho_\ell:X\to[0,1]$ from
Remark~\ref{rem:prepared-localizer}. Recall that
$$
 \rho_\ell=1\text{ on }K_\ell,
 \qquad
 \supp\rho_\ell\subset O_\ell
 \qquad(\ell\in\Lambda).
$$
Write
$$
 \boldsymbol{\rho}:X\longrightarrow[0,1]^\Lambda,
 \qquad
 \boldsymbol{\rho}(z):=(\rho_\ell(z))_{\ell\in\Lambda},
$$
and, for each finite set $E\subset G$, set
$$
 \boldsymbol{\rho}^{E}(z)
 :=
 \bigl(\rho_\ell(uz)\bigr)_{(u,\ell)\in E\times\Lambda}.
$$
\begin{lemma}
\label{lem:scale-uniform-coordinates}
With the data fixed above, let
$\varepsilon_A,\kappa,\varepsilon_R>0$, and let $R\subset G$
be a nonempty finite set such that the translates $rU$,
$r\in R$, are pairwise disjoint and
$$
 \frac{1+\varepsilon_R}{|R|}<\kappa.
$$
Let $S\subset G$ be a nonempty finite set such that $AS$ is
$(K_{\mathcal H},\delta_{\mathcal H})$-invariant and
$$
 |AS|<(1+\varepsilon_A)|S|,
 \qquad
 |RF^{-1}S|<(1+\varepsilon_R)|S|.
$$
Define
\begin{equation}\label{eq:coordinate-map}
\begin{aligned}
 \Pi_S:X&\longrightarrow
 (\Delta^{J-1})^{AS}
 \times[0,1]^{F^{-1}S\times\Lambda},\\
 \Pi_S(z)&:=
 \bigl(
   \boldsymbol{\psi}^{AS}(z),
   \boldsymbol{\rho}^{F^{-1}S}(z)
 \bigr),
\end{aligned}
\end{equation}
and put $P_S:=\Pi_S(X)$. Then there exists a continuous map
$$
 \overline q_S:P_S\longrightarrow([0,1]^m)^S
$$
such that
\begin{equation}\label{eq:coordinate-recovery}
 q_0^S=\overline q_S\circ\Pi_S,
\end{equation}
and
\begin{equation}\label{eq:coordinate-dimension-bound}
 \dim P_S<
 \bigl(a(1+\varepsilon_A)+\kappa\bigr)|S|.
\end{equation}

Moreover, for every compact set $Y\subset X$ and every
$\tau>0$, there exists a finite relatively open cover
$\mathcal U$ of $Y$ such that:
\begin{enumerate}[label=\textup{(\roman*)},leftmargin=*]
\item
$\mathcal U$ refines
$$
 \left.\bigvee_{s\in S}s^{-1}\mathcal W\right|_Y;
$$
\item
the oscillation of $q_0^S$ on each member of $\mathcal U$
is less than $\tau$;
\item
$$
 \ord(\mathcal U)<
 \bigl(a(1+\varepsilon_A)+\kappa\bigr)|S|.
$$
\end{enumerate}
In particular, $P_S$, $\Pi_S$, $\overline q_S$, and the stated
dimension bound are fixed before the oscillation scale $\tau$
is chosen.
\end{lemma}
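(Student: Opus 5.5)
The proof has three parts: a factorization, a dimension count for $P_S$, and the construction of covers by pulling back.

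\textbf{Factorization.} By Remark~\ref{rem:prepared-localizer}, for $s\in S$ and $z\in X$ we have
\[
 q_0(sz)=Q\bigl(\boldsymbol\psi^{A}(sz),\boldsymbol\rho^{F^{-1}}(sz)\bigr).
\]
The coordinates $\boldsymbol\psi(csz)$ with $c\in A$ are among the coordinates of $\boldsymbol\psi^{AS}(z)$. The coordinates $\rho_\ell(usz)$ with $u\in F^{-1}$ are among those of $\boldsymbol\rho^{F^{-1}S}(z)$. So let $\widetilde q_S$ be the map on the ambient product that applies $Q$, for each $s\in S$, to the appropriate coordinate projections. Then $\widetilde q_S$ is continuous, and its restriction $\overline q_S:=\widetilde q_S|_{P_S}$ satisfies \eqref{eq:coordinate-recovery}. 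Its values lie in $([0,1]^m)^S$ on $P_S$, because $q_0$ takes values in $[0,1]^m$.

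\textbf{Dimension bound.} We have $P_S\subset\bigcup_{x\in X}\Sigma_x$, where
\[
 \Sigma_x:=\prod_{t\in AS}\Delta_{\operatorname{loc}_{\mathcal D}(tx)}\times\prod_{(u,\ell)}Z_{u,\ell}(x),
 \qquad
 Z_{u,\ell}(x)=
 \begin{cases}
 [0,1] & \text{if } ux\in U,\\
 \{0\} & \text{otherwise.}
 \end{cases}
\]
This uses $\supp\rho_\ell\subset O_\ell\subset U$: if $ux\notin U$, then $\rho_\ell(ux)=0$. As $x$ ranges over $X$, only finitely many patterns occur, so the right-hand side is a finite union of compact polyhedra (faces of a cube times simplices). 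By Lemma~\ref{lem:basic-semialgebraic-facts}(i),(ii), its dimension is the maximum over patterns of the sum of the dimensions of the factors.

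The simplex factors contribute $\sum_{t\in AS}\rOrdD(tx)$. Since $AS$ is $(K_{\mathcal H},\delta_{\mathcal H})$-invariant, Corollary~\ref{cor:pou-window-control} bounds this by $a|AS|<a(1+\varepsilon_A)|S|$. The cube factors contribute at most $|\Lambda|\cdot|\{u\in F^{-1}S:ux\in U\}|$, and Lemma~\ref{lem:marker-sparsity} bounds the count of such $u$ by $\kappa|S|$.

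This gives a factor $|\Lambda|$ in front of $\kappa|S|$, which does not match \eqref{eq:coordinate-dimension-bound}. This is the main obstacle. The fix is the pairwise disjointness of the $\overline{O_\ell}$. For fixed $u$, at most one $\rho_\ell(ux)$ is nonzero, so the pair $\bigl(\rho_\ell(ux)\bigr)_{\ell}$ lies in the union of the coordinate axes of $[0,1]^\Lambda$. That union is a one-dimensional polyhedron. Refining the patterns to record, for each $u$ with $ux\in U$, which $O_\ell$ (if any) contains $ux$, each cube factor becomes a single segment. The contribution is then at most $\kappa|S|$, and \eqref{eq:coordinate-dimension-bound} follows. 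Since $P_S$ is a compact subset of a polyhedron of that dimension, its covering dimension obeys the same bound.

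\textbf{Covers.} Given $Y$ and $\tau$, choose a Lebesgue number $\lambda$ for the uniformly continuous map $\overline q_S$ on the compact set $P_S$, at oscillation scale $\tau$. Since $P_S$ is compact, the standard theory gives finite open covers of $P_S$ of mesh less than $\lambda$ with order at most $\dim P_S$.

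For (i), we need the pullback to refine $\bigvee_s s^{-1}\mathcal W$. By Lemma~\ref{lem:global-dynamic-pou}(iv), equal $\boldsymbol\psi$-values force $x,y$ into a common member of $\mathcal W$; apply this to $sx$ and $sy$ for each $s\in S$, using $e\in A$. This is only a fibrewise statement, so we need a quantitative version. Each $D_j$ lies in a member of $\mathcal W$, and $\boldsymbol\psi(sz)$ determines which $\widetilde{O_j}$ contain $sz$. Hence, once the mesh is small enough, every member of the cover of $P_S$ is contained in a set of the form $\{p:p_{s,j_s}>0\ \forall s\in S\}$. Its pullback is then contained in $\bigcap_s s^{-1}\widetilde{O_{j_s}}$, which lies in a member of $\bigvee_s s^{-1}\mathcal W$.

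To arrange this, take a finite open cover of $P_S$ by the sets $\{p:p_{s,j_s}>0\ \forall s\}$, use its Lebesgue number together with $\lambda$, and pick a cover of order at most $\dim P_S$ refining both. Pulling back by $\Pi_S$ and restricting to $Y$ preserves refinement, keeps the order at most $\dim P_S$, and by \eqref{eq:coordinate-recovery} gives oscillation of $q_0^S$ less than $\tau$ on each member. This yields (i)–(iii), with all data other than $\mathcal U$ fixed independently of $\tau$.
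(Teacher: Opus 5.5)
Your proposal is correct. The factorization and the dimension count follow the paper. The paper likewise uses $\supp\rho_\ell\subset O_\ell\subset U$, Lemma~\ref{lem:marker-sparsity}, and the pairwise disjointness of the $O_\ell$ to place $\boldsymbol\rho^{F^{-1}S}(z)$ in a coordinate cube with one free coordinate per active root. The only organizational difference is that the paper bounds the two coordinate blocks separately, via $P_S\subset L_{AS}\times Z_S$ and the product theorem, whereas you sum face dimensions and segment counts pointwise in $\Sigma_x$. Since you invoke Lemma~\ref{lem:basic-semialgebraic-facts}, which concerns semialgebraic dimension, you should add that for these compact polyhedra it agrees with covering dimension. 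Alternatively, use the closed finite sum theorem for covering dimension, as the paper does.

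The genuine difference is in how you build $\mathcal U$. The paper works on $P_Y=\Pi_S(Y)$. It shows that each fibre of $\pi=\Pi_S|_Y$ lies in $Y\cap\bigcap_s s^{-1}\widetilde{O_{j_s}}$. It then uses that $\pi$ is a closed map to get neighbourhoods $V_p$ with $\pi^{-1}(V_p)\subset U_p$, intersects these with small-oscillation neighbourhoods, and refines to order at most $\dim P_Y$.

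You instead write down the relatively open sets $\{p\in P_S: p_{s,j_s}>0\ \forall s\in S\}$ directly. By honesty and $e\in A$, their preimages are exactly $\bigcap_s s^{-1}\widetilde{O_{j_s}}$. A mesh argument in $P_S$ then gives refinement of the join and small oscillation simultaneously. (What you call a Lebesgue number for $\overline q_S$ is really a modulus of uniform continuity, but the meaning is clear.)

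Your route is more concrete and avoids the closed-map step. It also yields a cover of $P_S$ that does not depend on $Y$, so $\mathcal U$ is just the restriction to $Y$ of a global cover of $X$. The paper's fibrewise argument uses only that fibres of the coordinate map are small. It would therefore apply to any coordinate map with that property, without an explicit description of the relevant open sets in coordinate space.
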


\begin{proof}
By Remark~\ref{rem:prepared-localizer}, there exists a continuous map
\[
 Q:
 (\Delta^{J-1})^A
 \times
 [0,1]^{F^{-1}\times\Lambda}
 \longrightarrow \mathbb R^m
\]
such that
\begin{equation}\label{eq:coordinate-one-site-formula}
 q_0(z)
 =
 Q\bigl(
   \boldsymbol{\psi}^{A}(z),
   \boldsymbol{\rho}^{F^{-1}}(z)
 \bigr),
 \qquad z\in X.
\end{equation}

For $s\in S$, applying
\eqref{eq:coordinate-one-site-formula} to $sz$ gives
\[
 q_0(sz)
 =
 Q\left(
   \bigl(\boldsymbol{\psi}(csz)\bigr)_{c\in A},
   \bigl(\rho_\ell(us z)\bigr)_
        {(u,\ell)\in F^{-1}\times\Lambda}
 \right).
\]
Since
$
 cs\in AS
$ and $
 us\in F^{-1}S,$
all the entries on the right-hand side are coordinates of
$\Pi_S(z)$.
More explicitly, write a point of
$
 (\Delta^{J-1})^{AS}
 \times
 [0,1]^{F^{-1}S\times\Lambda}
$
as $(\xi,\eta)$, where
$
 \xi=(\xi_v)_{v\in AS},
$ and $ \eta=(\eta_{v,\ell})_
      {(v,\ell)\in F^{-1}S\times\Lambda}.$
Define
\[
 \widetilde q_S(\xi,\eta)
 :=
 \left(
 Q\left(
   (\xi_{cs})_{c\in A},
   (\eta_{us,\ell})_
        {(u,\ell)\in F^{-1}\times\Lambda}
 \right)
 \right)_{s\in S}.
\]
Then $\widetilde q_S$ is continuous and
$
 q_0^S=\widetilde q_S\circ\Pi_S.
$
Restricting $\widetilde q_S$ to
$P_S=\Pi_S(X)$, we obtain a continuous map
$
 \overline q_S:=\widetilde q_S|_{P_S}
 :P_S\to([0,1]^m)^S
$
satisfying
$
 q_0^S=\overline q_S\circ\Pi_S.
$
This proves \eqref{eq:coordinate-recovery}.

We next estimate the dimension of $P_S$ to prove
\eqref{eq:coordinate-dimension-bound}.
For $z\in X$, set
$
 E_z:=\{u\in F^{-1}S:uz\in U\}.
$
By Lemma~\ref{lem:marker-sparsity},
\begin{equation}\label{eq:coordinate-sparse-roots}
 |E_z|<\kappa|S|.
\end{equation}
For $z\in X$, define the set of active localizing roots by\footnote{Since $\supp\rho_\ell\subset O_\ell\subset U$ for every
$\ell\in\Lambda$, it holds $E_z^{\mathrm{loc}}\subset E_z$.}
\[
 E_z^{\mathrm{loc}}
 :=
 \left\{
 u\in F^{-1}S:
 \rho_\ell(uz)\neq 0
 \text{ for some }\ell\in\Lambda
 \right\}\subset E_z.
\]
Therefore, by \eqref{eq:coordinate-sparse-roots},
\begin{equation}\label{eq:coordinate-active-localizing-roots}
 |E_z^{\mathrm{loc}}|<\kappa|S|.
\end{equation}

Moreover, for every $u\in E_z^{\mathrm{loc}}$, there is a unique
index $\lambda_z(u)\in\Lambda$ such that
$
 \rho_{\lambda_z(u)}(uz)\neq 0.
$
Indeed, if both $\rho_\ell(uz)$ and $\rho_{\ell'}(uz)$ were
nonzero, then
$
 uz\in O_\ell\cap O_{\ell'},
$
and the pairwise disjointness of the sets $O_\ell$ would imply
$\ell=\ell'$.
Consequently, $\boldsymbol{\rho}^{F^{-1}S}(z)$ has fewer than
$\kappa|S|$ nonzero scalar coordinates. For each fixed set of
positions in which nonzero coordinates may occur, these vectors
lie in a coordinate cube whose dimension is the number of those
positions. We now define these cubes.

For a subset $E\subset F^{-1}S$ and a map $\lambda:E\to\Lambda$, let
\[
 \Gamma(E,\lambda)
 :=
 \{(u,\lambda(u)):u\in E\}
 \subset F^{-1}S\times\Lambda.
\]
Define
\[
 Z(E,\lambda)
 :=
 \left\{
 w\in[0,1]^{F^{-1}S\times\Lambda}:
 w_{u,\ell}=0
 \text{ whenever }(u,\ell)\notin\Gamma(E,\lambda)
 \right\}.
\]
Thus only the coordinates indexed by $\Gamma(E,\lambda)$ are allowed
to vary, and the coordinate projection
\[
 Z(E,\lambda)\longrightarrow[0,1]^E,
 \qquad
 w\longmapsto\bigl(w_{u,\lambda(u)}\bigr)_{u\in E},
\]
is a homeomorphism.  In particular,
\begin{equation}\label{eq:coordinate-coordinate-box-dimension}
 \dim Z(E,\lambda)=|E|.
\end{equation}

We now collect all support patterns allowed by
\eqref{eq:coordinate-active-localizing-roots}.  Set
\[
 Z_S
 :=
 \bigcup_{\substack{
 E\subset F^{-1}S,\ |E|<\kappa|S|\\
 \lambda:E\to\Lambda}}
 Z(E,\lambda).
\]
This is a finite union, since both $F^{-1}S$ and $\Lambda$ are finite.
For every $z\in X$, the active coordinates of
$\boldsymbol{\rho}^{F^{-1}S}(z)$ are precisely those indexed by
$\Gamma(E_z^{\mathrm{loc}},\lambda_z)$.  Hence
\[
 \boldsymbol{\rho}^{F^{-1}S}(z)
 \in Z(E_z^{\mathrm{loc}},\lambda_z)
 \subset Z_S,
\]
and therefore
\[
 \boldsymbol{\rho}^{F^{-1}S}(X)\subset Z_S.
\]

It remains to estimate the dimension of $Z_S$. Each cube $Z(E,\lambda)$ is compact, hence closed in $Z_S$,
and has dimension $|E|<\kappa|S|$.
Since $Z_S$ is a finite union of these cubes, the finite
 sum theorem for covering dimension of closed sets gives
\begin{equation}\label{eq:coordinate-localizing-dimension}
 \dim Z_S
 =
 \max_{\substack{
 E\subset F^{-1}S,\ |E|<\kappa|S|\\
 \lambda:E\to\Lambda}}
 \dim Z(E,\lambda)
 <\kappa|S|.
\end{equation}

By hypothesis, the finite set $AS$ is
$(K_{\mathcal H},\delta_{\mathcal H})$-invariant.  Hence
Corollary~\ref{cor:pou-window-control}, applied with $T=AS$, gives
a finite union $L_{AS}$ of products of simplex faces such that
\[
 \boldsymbol{\psi}^{AS}(X)\subset L_{AS}
\qquad\text{and}\qquad
 \dim L_{AS}<a|AS|.
\]
Using $|AS|<(1+\varepsilon_A)|S|$, we therefore obtain
\begin{equation}\label{eq:coordinate-pou-dimension}
 \dim L_{AS}
 <
 a|AS|
 <
 a(1+\varepsilon_A)|S|.
\end{equation}

Since
$
 \Pi_S(z)
 =
 \bigl(
   \boldsymbol{\psi}^{AS}(z),
   \boldsymbol{\rho}^{F^{-1}S}(z)
 \bigr),
$
the two inclusions above imply, for every $z\in X$,
$
 \Pi_S(z)\in L_{AS}\times Z_S.
$
Consequently,
\begin{equation*}
 P_S=\Pi_S(X)\subset L_{AS}\times Z_S.
\end{equation*}
Since $P_S\subset L_{AS}\times Z_S$, monotonicity and the
product theorem for covering dimension of metrizable spaces give
\begin{align*}
 \dim P_S
 \le \dim(L_{AS}\times Z_S)\le \dim L_{AS}+\dim Z_S<
 \bigl(a(1+\varepsilon_A)+\kappa\bigr)|S|.
 \end{align*}
This proves \eqref{eq:coordinate-dimension-bound}.

Fix a compact set $Y\subset X$, and set
$ P_Y:=\Pi_S(Y)\subset P_S.$
We first show that every fibre of
$
 \pi:=\Pi_S|_Y:Y\to P_Y
$
is contained in a single member of the restricted $S$-join
$
 \left.\bigvee_{s\in S}s^{-1}\mathcal W\right|_Y.
$
Indeed,
fix $p\in P_Y$.  Since $e\in A$, for every $s\in S$ we have
$
 s=es\in AS.
$
Hence the coordinate $\boldsymbol{\psi}(sz)$ is recorded by
$\boldsymbol{\psi}^{AS}(z)$, and therefore it is constant as $z$ ranges over
the fibre
$
 \pi^{-1}(p)
 =
 \Pi_S^{-1}(p)\cap Y.
$
Denote this common vector by
$
 \xi_s\in\Delta^{J-1}.
$
For each $s\in S$, choose $j_s\in\{1,\ldots,J\}$ such that
$
 (\xi_s)_{j_s}>0.
$
By honesty of the partition of unity, for any $z\in\pi^{-1}(p)$,
\[
 \psi_{j_s}(sz)= (\xi_s)_{j_s}>0
 \quad\Longrightarrow\quad
 sz\in\widetilde{O_{j_s}}.
\]
Thus, 
\begin{equation}\label{eq:coordinate-fibre-chart}
\pi^{-1}(p)\subset U_p :=
 Y\cap\bigcap_{s\in S}s^{-1}\widetilde{ O_{j_s}}.
\end{equation}
For each $s\in S$, choose $W_s\in\mathcal W$ such that
$
 \widetilde{O_{j_s}}\subset D_{j_s}\subset W_s.
$
Then
$
 U_p
 \subset
 Y\cap\bigcap_{s\in S}s^{-1}W_s,
$
and the set on the right is a member of
$
 \left.\bigvee_{s\in S}s^{-1}\mathcal W\right|_Y.
$
Thus each fibre of $\Pi_S|_Y$ is contained in one member of the
restricted $S$-join.

For every $p\in P_Y$, choose a member $U_p$ of the restricted
$S$-join which contains $\pi^{-1}(p)$.  Since $Y$ is compact and
$P_Y$ is Hausdorff, $\pi$ is a closed map.  Therefore
$V_p:=P_Y\setminus\pi(Y\setminus U_p)$
is an open neighbourhood of $p$ satisfying
$\pi^{-1}(V_p)\subset U_p$.  By continuity of
$\overline q_S|_{P_Y}$, choose an open neighbourhood
$W_p\subset P_Y$ of $p$ on which $\overline q_S$ has oscillation
less than $\tau$.  A finite subfamily of
$\{V_p\cap W_p:p\in P_Y\}$ covers $P_Y$.

Since
\[
 \dim P_Y\le\dim P_S
 <\bigl(a(1+\varepsilon_A)+\kappa\bigr)|S|,
\]
the definition of covering dimension yields a finite open refinement
$\mathcal R$ of this cover such that
\[
 \ord(\mathcal R)\le\dim P_Y
 <\bigl(a(1+\varepsilon_A)+\kappa\bigr)|S|.
\]
Set
$\mathcal U:=\{\pi^{-1}(R):R\in\mathcal R\}$, viewed as a relatively open cover of $Y$.  This cover refines
the restricted $S$-join, and, since
$q_0^S=\overline q_S\circ\pi$, the oscillation of $q_0^S$ on each
member is less than $\tau$.  Finally,
\[
 \ord(\mathcal U)\le\ord(\mathcal R)
 <\bigl(a(1+\varepsilon_A)+\kappa\bigr)|S|.
\]
This proves all assertions.
\end{proof}
\subsection[The global perturbation]{The global perturbation}\label{subsec:global-perturbation}

The preceding constructions play two complementary roles.  The
finite-window recognition property of $q_0$, together with the
routing pairs, ensures that any collision of a sufficiently
small perturbation of $q_0$ is routed into a common tower or
remainder chart.  On the other hand,
Lemma~\ref{lem:scale-uniform-coordinates} produces, on every sufficiently
invariant shape $S$, a low-order cover on which the block
$q_0^S$ has arbitrarily small oscillation.

On each buffered tower base we perturb $q_0^{S_i}$ to a nearby map
encoding the low-order cover supplied by
Lemma~\ref{lem:scale-uniform-coordinates} and the transported remainder
pieces. Cutoff functions supported inside the buffered bases combine
these block maps with $q_0$ into a single global observable $q$.
The support conditions ensure continuity, and the small perturbation
preserves finite-window recognition. Since the castle levels are
pairwise disjoint, at most one tower cutoff contributes at any point.

The local-to-global tower assembly here parallels the corresponding
construction in Gutman--Tsukamoto
\cite[Lemma~2.1 and the proof of Proposition~3.1]{gutmantsukamoto2014}.
There, block maps defined on clopen tower bases are placed along the
corresponding tower levels to obtain a global observable. In our setting the tower levels are only open and a remainder may be
present, so this direct piecewise assembly is unavailable. Buffered
cutoffs provide continuity, while finite-window recognition and
comparison handle the routing and the remainder.

\begin{lemma}
\label{lem:naryshkin-payload}
Let $Y$ be a compact metrizable space, let
$F:Y\to[0,1]^N$ be continuous, and let $\delta>0$.
Let $\mathcal U$ be a finite open cover of $Y$ and
$\mathcal V$ a finite family of open subsets of $Y$,
with repetitions discarded.
Assume that%
\footnote{Naryshkin's notion of order in
\cite[Definition~6.8]{naryshkin2024} is our multiplicity.}
\begin{equation}\label{eq:naryshkin-payload-dimension}
 \mult(\mathcal U)+\mult(\mathcal V)<\frac N2,
\end{equation}
and that the oscillation of $F$ on each member of $\mathcal U$
is less than $\delta/2$.
Then there exists a continuous map
$\widetilde F:Y\to[0,1]^N$ such that
$$
 \|\widetilde F-F\|_\infty<\delta
$$
and $\widetilde F$ encodes every member of
$\mathcal U\cup\mathcal V$.
\end{lemma}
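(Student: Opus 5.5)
The plan is to replace $F$ by a general-position "piecewise affine" map subordinate to $\mathcal U$, plus small independent bumps that record membership in the members of $\mathcal V$. Encoding a set $C$ is equivalent to $\widetilde F(C)\cap\widetilde F(Y\setminus C)=\emptyset$, so it suffices to show that a point inside a given member and a point outside it never have the same image. This will come from a linear-algebra argument whose dimension count is exactly \eqref{eq:naryshkin-payload-dimension}.

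\textbf{Setup.} Discard empty members and write $\mathcal U=\{U_1,\dots,U_p\}$ and $\mathcal V=\{V_1,\dots,V_k\}$. First replace $F$ by $(1-\sigma)F+\sigma\cdot\tfrac12\mathbf 1$ for a tiny $\sigma>0$. This moves $F$ into $(0,1)^N$, and the oscillation on members of $\mathcal U$ is still $<\delta/2$. Choose continuous $h_i\ge0$ with $\{h_i>0\}=U_i$ and set $\varphi_i=h_i/\sum_{i'}h_{i'}$. This is an honest partition of unity for $\mathcal U$. Choose continuous $\chi_k:Y\to[0,1]$ with $\{\chi_k>0\}=V_k$. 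Pick $y_i\in U_i$. Define
\[
 \widetilde F(y)=\sum_{i=1}^p\varphi_i(y)w_i+\sum_{k}\chi_k(y)b_k,
\]
where $w_i$ lies within a small $\gamma$ of $F(y_i)$ inside $(0,1)^N$, and the $b_k$ have norm $<\gamma$. As in Lemma~\ref{lem:global-dynamic-pou}(ii), $\|\sum_i\varphi_iw_i-F\|_\infty<\delta/2+\gamma$ on $Y$. Hence $\|\widetilde F-F\|_\infty<\delta$ for $\gamma$ and $\sigma$ small. For these $\gamma,\sigma$, $\widetilde F$ also takes values in $[0,1]^N$, since the $w_i$ lie strictly inside and the $b_k$-term has norm at most $k\gamma$.

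\textbf{General position.} For $y\in Y$ let $I(y)=\{i:\varphi_i(y)>0\}$ and $K(y)=\{k:\chi_k(y)>0\}$. Then $|I(y)|\le\mult(\mathcal U)$ and $|K(y)|\le\mult(\mathcal V)$. Require that, for every pair of realized patterns $(I,K),(I',K')$, the points $\{w_i:i\in I\cup I'\}$ are affinely independent and the vectors $\{w_i-w_{i_0}\}_{i\in I\cup I'\setminus\{i_0\}}\cup\{b_k\}_{k\in K\cup K'}$ are linearly independent. The number of vectors is at most $|I\cup I'|-1+|K\cup K'|\le 2(\mult\mathcal U+\mult\mathcal V)-1<N$. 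There are finitely many such conditions, each open and dense in the choice of $(w,b)\in(\mathbb R^N)^{p+k}$. So a choice satisfying all of them, and the smallness constraints above, exists.

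\textbf{Encoding.} Suppose $\widetilde F(y)=\widetilde F(y')$. Subtracting the two expressions gives $\sum_{i}c_iw_i+\sum_k d_kb_k=0$ with $\sum_ic_i=0$, since $\sum_i\varphi_i(y)=\sum_i\varphi_i(y')=1$. This can be rewritten as $\sum_{i\ne i_0}c_i(w_i-w_{i_0})+\sum_kd_kb_k=0$. Linear independence forces all $c_i=0$ and all $d_k=0$. In particular:
\begin{itemize}
\item if $y\in U_i$ and $y'\notin U_i$, then $c_i=\varphi_i(y)>0$, a contradiction;
\item if $y\in V_k$ and $y'\notin V_k$, then $d_k=\chi_k(y)>0$, a contradiction.
\end{itemize}
Hence $\widetilde F$ encodes every member of $\mathcal U\cup\mathcal V$.

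The step needing the most care is the dimension count in the general-position step. It is what uses the hypothesis $\mult(\mathcal U)+\mult(\mathcal V)<N/2$. One has to check that the affine normalisation $\sum_i\varphi_i=1$ costs exactly one dimension, so that the total stays $\le N$ and the required independence is generic.
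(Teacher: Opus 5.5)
Your proof is correct, but it takes a different route from the paper. The paper does not argue directly. It picks $\gamma\in(0,1)$ with $\mult(\mathcal U)+\mult(\mathcal V)\le(1-\gamma)N/2$ and invokes \cite[Lemma~6.14]{naryshkin2024} with the full coordinate set $I=\{1,\ldots,N\}$.

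You instead give a self-contained general-position nerve-map argument:
\begin{itemize}
\item you replace $F$ by $\sum_i\varphi_iw_i$ for an honest partition of unity subordinate to $\mathcal U$, which is the same approximation as in Lemma~\ref{lem:global-dynamic-pou}(ii);
\item you add bump terms $\sum_k\chi_kb_k$ that record membership in the members of $\mathcal V$;
\item you use the normalisation $\sum_i\varphi_i=1$ to pass to the differences $w_i-w_{i_0}$, so that at most $|I\cup I'|-1+|K\cup K'|<N$ vectors must be linearly independent, which is a generic condition.
\end{itemize}

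What your route buys:
\begin{itemize}
\item It is independent of the external lemma and of the order/multiplicity translation.
\item It needs only $2\mult(\mathcal U)+2\mult(\mathcal V)\le N+1$, with no slack parameter.
\item It proves more than encoding: $\widetilde F(y)=\widetilde F(y')$ forces $\varphi_i(y)=\varphi_i(y')$ and $\chi_k(y)=\chi_k(y')$ for all $i,k$.
\end{itemize}
The citation buys brevity and keeps the paper aligned with Naryshkin's framework. That framework's extra flexibility (an arbitrary coordinate set $I$ with $|I|>(1-\gamma)N$) is not used here.

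Two bookkeeping points should be made explicit, though neither is a gap.
\begin{itemize}
\item Choose $\sigma$ first and then $\gamma$ with $(|\mathcal V|+1)\gamma\le\sigma/2$, so that the values stay in $[0,1]^N$.
\item The uniform bound $\|\sum_i\varphi_iw_i-F\|_\infty<\delta/2+\gamma$ uses that the maximum of the finitely many oscillations is still $<\delta/2$. Alternatively, the factor-two slack in $\delta$ already suffices.
\end{itemize}
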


\begin{proof}
Choose $\gamma\in(0,1)$ sufficiently small that
$$
 \mult(\mathcal U)+\mult(\mathcal V)
 \le (1-\gamma)N/2.
$$
Apply \cite[Lemma~6.14]{naryshkin2024} to $\mathcal U$
and $\mathcal V$. Taking the full coordinate set
$I=\{1,\ldots,N\}$, for which $|I|=N>(1-\gamma)N$,
gives the required approximation and encoding properties.
\end{proof}
\begin{proposition}
\label{prop:global-perturbation}
Fix $\eta>0$, and let $\mathcal W$ be a finite open cover of $X$ with
$\mesh(\mathcal W)<\eta$.
Retain the data associated with the prepared localizer $q_0$ from
Lemma~\ref{lem:scale-uniform-coordinates}, in particular
$F,U,R,\varepsilon_A,\kappa$, and $\varepsilon_R$.

Let $\alpha>0$, and suppose that we are given buffered URPC data as in
 Proposition~\ref{lem:urpc-buffer-router}, with shapes $S_i$, tower bases
$
V_i^-\Subset V_i^+\Subset B_i,
$
remainder sets
$
U_j^-\Subset U_j^+,
$
transporters $g_j$, and routing pairs
$$
K_\ell\subset O_\ell\Subset U,
\qquad \ell\in\Lambda.
$$
Denote
\[
N_{\mathcal W}:=\mult(\mathcal W)=\ord(\mathcal W)+1
\qquad\text{and}\qquad
d_0:=a(1+\varepsilon_A)+\kappa+N_{\mathcal W}\alpha.
\]
Suppose that the following conditions hold:
\begin{enumerate}[label=\textup{(\roman*)},leftmargin=*]
\item
$
q_0^F(K_\ell)\cap q_0^F(X\setminus O_\ell)=\emptyset
$, $\ell\in\Lambda.
$

\item  $d_0<d<m/2$ for some $d\in \mathbb{R}$.

\item Every shape $S_i$ satisfies
$
|AS_i|<(1+\varepsilon_A)|S_i|,
$
the set $AS_i$ is $(K_{\mathcal H},\delta_{\mathcal H})$-invariant,
$
|RF^{-1}S_i|<(1+\varepsilon_R)|S_i|,
$
and
$
|S_i|>\frac1{d-d_0}.
$
\end{enumerate}
Then, for every $\varepsilon>0$, there exists
$q\in C(X,[0,1]^m)$ such that
$
\|q-q_0\|_\infty<\varepsilon
$
and $q^G$ is an $\eta$-embedding.
\end{proposition}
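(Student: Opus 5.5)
We will build $q$ from $q_0$ by a tower-by-tower perturbation in the spirit of Gutman--Tsukamoto, and then show that every collision of $q^G$ is short.

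\emph{Step 1: block maps on the towers.} For each $i$, apply Lemma~\ref{lem:scale-uniform-coordinates} with $S=S_i$ and the compact set $Y_i:=\overline{B_i}$. Hypothesis (iii) supplies exactly the invariance assumptions of that lemma. Choose $\tau$ small compared with $\varepsilon$ and with the recognition margin: by (i) and compactness, the compact sets $q_0^F(K_\ell)$ and $q_0^F(X\setminus O_\ell)$ are at positive distance, so any perturbation smaller than a fixed $\varepsilon_1>0$ preserves (i). The lemma gives a cover $\mathcal U_i$ of $Y_i$ with three properties: it refines $\bigvee_{s\in S_i}s^{-1}\mathcal W|_{Y_i}$, $q_0^{S_i}$ has oscillation $<\tau$ on each member, and $\ord(\mathcal U_i)<(a(1+\varepsilon_A)+\kappa)|S_i|$.

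Let $\mathcal V_i$ be the family of transported remainder pieces $g_j(U_j^+\cap W)\cap Y_i$, where $i(j)=i$ and $W\in\mathcal W$. For a fixed colour $c$, the sets $g_jU_j^+$, $j\in J_{i,c}$, are pairwise disjoint. Hence $\mult(\mathcal V_i)\le N_{\mathcal W}\lfloor\alpha|S_i|\rfloor$. Consequently
\[
\mult(\mathcal U_i)+\mult(\mathcal V_i)<d_0|S_i|+1<d|S_i|<\tfrac{m|S_i|}2,
\]
using $|S_i|>1/(d-d_0)$. Now apply Lemma~\ref{lem:naryshkin-payload} with $Y=Y_i$, $F=q_0^{S_i}|_{Y_i}$ and $N=m|S_i|$. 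This yields $\widetilde F_i$, uniformly within $2\tau$ of $q_0^{S_i}$, which encodes every member of $\mathcal U_i\cup\mathcal V_i$.

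\emph{Step 2: assembly.} Choose cutoffs $\chi_i:X\to[0,1]$ with $\chi_i=1$ on $\overline{V_i^+}$ and $\supp\chi_i\subset B_i$. Define
\[
q(x):=q_0(x)+\chi_i(s^{-1}x)\bigl(\widetilde F_i(s^{-1}x)_s-q_0(x)\bigr)\quad\text{for } x\in sB_i,
\]
and $q(x):=q_0(x)$ elsewhere. The sets $sB_i\subset sV_i^0$ are pairwise disjoint, so this is well defined and continuous. Moreover $\|q-q_0\|_\infty<2\tau<\min(\varepsilon,\varepsilon_1)$, after clipping into $[0,1]^m$ if necessary. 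On $V_i^+$ we have $q^{S_i}=\widetilde F_i$.

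\emph{Step 3: collisions are short.} Let $q^G(x)=q^G(y)$. Since $q$ satisfies recognition, Proposition~\ref{lem:urpc-buffer-router}(ii) places $x,y$ in a common member of $\mathcal C^+$.

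In the tower case we have $x,y\in sV_i^+$. Then $\widetilde F_i(s^{-1}x)=q^{S_i}(s^{-1}x)=q^{S_i}(s^{-1}y)=\widetilde F_i(s^{-1}y)$. Encoding of $\mathcal U_i$ forces $s^{-1}x$ and $s^{-1}y$ into a common member of $\mathcal U_i$. That member refines the $S_i$-join of $\mathcal W$, so $x$ and $y$ lie in a common element of $\mathcal W$, and therefore $\rho(x,y)<\eta$.

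In the remainder case we have $x,y\in U_j^+$. Pick $W\in\mathcal W$ containing $x$. Then $g_jx$ and $g_jy$ lie in $V_{i(j)}^+$ and have equal $q^{S_{i(j)}}$-names. Encoding of $g_j(U_j^+\cap W)$ gives $g_jy\in g_j(U_j^+\cap W)$. Hence $y\in W$, and again $\rho(x,y)<\eta$.

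\emph{Main obstacle.} I expect the hardest part to be the bookkeeping in Step 1. The multiplicity bound for $\mathcal V_i$ must use the colour partition together with the factor $N_{\mathcal W}$ from intersecting with $\mathcal W$. One must also take care that encoding on $Y_i$ rather than on $X$ suffices. It does, because only points of $V_i^+\subset Y_i$ are compared.
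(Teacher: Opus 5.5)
Your proposal is correct and follows essentially the same route as the paper: the same compact sets $Y_i=\overline{B_i}$, the same families $\mathcal U_i$ and $\mathcal V_i$ with the multiplicity count of one $j$ per colour times $N_{\mathcal W}$, the same appeal to Lemma~\ref{lem:naryshkin-payload}, the same cutoff-weighted convex combination, preservation of recognition via the positive separation margin, and the identical tower/remainder case analysis. The only cosmetic difference is your clipping remark, which is unnecessary: each value of $q$ is already a convex combination of two points of $[0,1]^m$.
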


\begin{proof}
For every $\ell\in\Lambda$, the compact sets
$
 q_0^F(K_\ell)
$ and $ q_0^F(X\setminus O_\ell)
$
are disjoint.  Since $\Lambda$ is finite, we may set
\begin{equation}\label{eq:payload-separation-margin}
 \sigma
 :=
 \min_{\ell\in\Lambda}
 \mathrm{dist}\bigl(
 q_0^F(K_\ell),q_0^F(X\setminus O_\ell)
 \bigr)>0.
\end{equation}
Choose
$
 0<\delta<\min\{\varepsilon,\sigma/3\}.
$

For each $i$,  denote $Y_i:=\overline{B_i}$.  Applying
Lemma~\ref{lem:scale-uniform-coordinates} with
$S=S_i$, $Y=Y_i$, and $\tau=\delta/2$, we obtain a finite
relatively open cover $\mathcal U_i$ of $Y_i$ which refines
$
 \left.
 \bigvee_{s\in S_i}s^{-1}\mathcal W
 \right|_{Y_i},
$
such that $q_0^{S_i}$ has oscillation less than $\delta/2$ on
every member of $\mathcal U_i$, and
\begin{equation}\label{eq:payload-U-order}
 \ord(\mathcal U_i)
 <
 \bigl(a(1+\varepsilon_A)+\kappa\bigr)|S_i|.
\end{equation}

For each base index $i$, use all remainder indices $j$ with
$i(j)=i$ and all members $W\in\mathcal W$ to form the finite family
\begin{equation}\label{eq:payload-V-family}
 \mathcal V_i:=
 \left\{Y_i\cap g_j(U_j^+\cap W):
  1\le j\le N,\ i(j)=i,\ W\in\mathcal W\right\}
 \setminus\{\emptyset\}.
\end{equation}
Repeated subsets occur only once. At a given point $y\in Y_i$,
each  color contributes at most one $j$, because the
sets $g_jU_j^+$ are disjoint within that color. For this $j$,
$g_j^{-1}y$ belongs to at most
$N_{\mathcal W}=\mult(\mathcal W)$ members of $\mathcal W$.
Thus
\[
 \mult(\mathcal V_i)
 \le N_{\mathcal W}\lfloor\alpha|S_i|\rfloor
 \le N_{\mathcal W}\alpha|S_i|.
\]
By \eqref{eq:payload-U-order} and
$\mult(\mathcal U_i)=\ord(\mathcal U_i)+1$,
\[
\begin{aligned}
 \mult(\mathcal U_i)+\mult(\mathcal V_i)
 &{}<\bigl(a(1+\varepsilon_A)+\kappa
          +N_{\mathcal W}\alpha\bigr)|S_i|+1\\
 &{}=d_0|S_i|+1.
\end{aligned}
\]
Assumption \textup{(iii)} gives $(d-d_0)|S_i|>1$, so, using
\textup{(ii)},
\[
 \mult(\mathcal U_i)+\mult(\mathcal V_i)
 <d_0|S_i|+1<d|S_i|<\frac{m|S_i|}{2}.
\]
This is the hypothesis of Lemma~\ref{lem:naryshkin-payload} with
$N=m|S_i|$.

Define
\[
 Q_i:Y_i\longrightarrow([0,1]^m)^{S_i},
 \qquad
 Q_i(u):=(q_0(su))_{s\in S_i}.
\]
Apply Lemma \ref{lem:naryshkin-payload} with
$N=m|S_i|$, $\mathcal U=\mathcal U_i$, and
$\mathcal V=\mathcal V_i$.    We obtain a
continuous map
$
 \widetilde Q_i
 =
 (\widetilde Q_i^s)_{s\in S_i}:
 Y_i\to([0,1]^m)^{S_i}
$
such that
\begin{equation}\label{eq:payload-block-approximation}
 \|\widetilde Q_i-Q_i\|_\infty<\delta
\end{equation}
and $\widetilde Q_i$ encodes every member of
$\mathcal U_i\cup\mathcal V_i$. 

In particular, encoding $\mathcal U_i$ and its refinement of the
$S_i$-join give
\[
 \widetilde Q_i(u)=\widetilde Q_i(v)
 \quad\Longrightarrow\quad
 \rho(su,sv)<\eta\quad(s\in S_i).
\]
Thus each block map is a local $\eta$-embedding for the orbit
metric $d_{S_i}(u,v):=\max_{s\in S_i}\rho(su,sv)$.

Choose $\chi_i\in C(X,[0,1])$ with
\begin{equation}\label{eq:payload-cutoff}
 \chi_i=1\text{ on }\overline{V_i^+}
 \qquad\text{and}\qquad
 \supp\chi_i\subset B_i.
\end{equation}
Since the sets $sB_i$, $s\in S_i$, are pairwise disjoint, so are
the sets $s\supp\chi_i$. Define the global perturbation by the
cutoff-weighted convex combination
\begin{equation}\label{eq:global-perturbation-formula}
 q(z)
 :=
 \left(
  1-\sum_{i}\sum_{s\in S_i}\chi_i(s^{-1}z)
 \right)q_0(z)+
 \sum_i\sum_{s\in S_i}
 \chi_i(s^{-1}z)\widetilde Q_i^s(s^{-1}z),
\end{equation}
where each weighted term is extended by zero outside $sY_i$.
The sets $sB_i$, over all tower labels $(i,s)$, are pairwise
disjoint.  Hence at every point at most one cutoff is nonzero, and
\eqref{eq:global-perturbation-formula} is a convex combination of two points of
$[0,1]^m$.  It is therefore cube-valued.  Moreover,
$\supp\chi_i\subset B_i$, so every weighted tower term vanishes on a
neighbourhood of the boundary of $sB_i$; its extension by zero is
continuous.  Hence
\begin{equation}\label{eq:payload-uniform-closeness}
 \|q-q_0\|_\infty<\delta<\varepsilon.
\end{equation}
Moreover,
\begin{equation}\label{eq:payload-block-identity}
 q^{S_i}(u)=\widetilde Q_i(u)
 \qquad(u\in V_i^+).
\end{equation}

Since $2\delta<\sigma$, the recognition property is preserved:
\[
 q^F(K_\ell)\cap q^F(X\setminus O_\ell)=\emptyset
 \qquad(\ell\in\Lambda).
\]
Indeed, otherwise equality $q^F(x)=q^F(y)$, with
$x\in K_\ell$ and $y\notin O_\ell$, would imply
\[
 \|q_0^F(x)-q_0^F(y)\|_\infty
 \le2\|q-q_0\|_\infty
 <2\delta<\sigma,
\]
 a contradiction. The preserved recognition property and
  Proposition~\ref{lem:urpc-buffer-router}\textup{(ii)} imply that every pair
$x,y\in X$ satisfying $q^G(x)=q^G(y)$ lies either in a common
tower level $s_0V_i^+$ or in a common remainder chart $U_j^+$.

In the tower case, suppose $x,y\in s_0V_i^+$ and  denote
$u=s_0^{-1}x$ and $v=s_0^{-1}y$. Then $u,v\in V_i^+$.
Equality $q^G(x)=q^G(y)$ gives $q(su)=q(sv)$ for every $s\in S_i$
by using the group element $ss_0^{-1}$. Hence, by
\eqref{eq:payload-block-identity},
\[
 \widetilde Q_i(u)=q^{S_i}(u)=q^{S_i}(v)=\widetilde Q_i(v).
\]
Choose $C\in\mathcal U_i$ with $u\in C$. Since
$\widetilde Q_i^{-1}(\widetilde Q_i(C))=C$, the equality implies
$v\in C$. Refinement of the restricted $S_i$-join gives sets
$W_s\in\mathcal W$, $s\in S_i$, such that
\[
 C\subset Y_i\cap\bigcap_{s\in S_i}s^{-1}W_s.
\]
Taking $s=s_0$, we obtain $x=s_0u,y=s_0v\in W_{s_0}$, and therefore
$\rho(x,y)<\eta$ because $\mesh(\mathcal W)<\eta$.

In the second case that $x,y\in U_j^+$, where $j$ is assigned
to base $i$, we denote
$u=g_jx$ and $v=g_jy$.
Then $u,v\in V_i^+$ and again
$\widetilde Q_i(u)=\widetilde Q_i(v)$.
If $\rho(x,y)\ge\eta$, choose $W\in\mathcal W$ with $x\in W$.
Since $\mesh(\mathcal W)<\eta$, we have $y\notin W$.  Thus
$
 Y_i\cap g_j(U_j^+\cap W)\in\mathcal V_i
$
contains $u$ but not $v$, contradicting the encoding property.
Hence $\rho(x,y)<\eta$ also in this case.

Thus
\[
 q^G(x)=q^G(y)\quad\Longrightarrow\quad \rho(x,y)<\eta,
\]
and so $q^G$ is an $\eta$-embedding.
\end{proof}

\subsection{Proof of the main proposition}
\label{subsec:parameter-assembly}
 We now choose the data in the order required by the preceding
results. In particular, the finite set $F$ is chosen before the marker and
before the pairs supplied by  Proposition~\ref{lem:urpc-buffer-router}.
The observable $q_0$ is chosen only after those pairs are fixed.

\begin{proof}[Proof of Proposition~\ref{prop:eta-approximation}]
Fix
$
 f_{\mathrm{orig}}\in C(X,[0,1]^m),
$ $
 \varepsilon>0
$ and $
 \eta>0.
$
 We shall construct $q$ such that $q^G$ is an $\eta$-embedding and
\[
 \|q-f_{\mathrm{orig}}\|_\infty<\varepsilon.
\]

Choose
$
 \mdim(X,G)<a<\frac m2,
$
and let
$
 c:=(1/2,\ldots,1/2).
$
For sufficiently small $\lambda>0$, the map
$
 f_{\mathrm{in}}
 :=
 (1-\lambda)f_{\mathrm{orig}}+\lambda c
$
takes values in $(0,1)^m$ and satisfies
\begin{equation}\label{eq:main-inward-approximation}
 \|f_{\mathrm{in}}-f_{\mathrm{orig}}\|_\infty
 <\frac{\varepsilon}{4}.
\end{equation}

Choose a finite open cover $\mathcal W$ of $X$ with
$
 \mesh(\mathcal W)<\eta.
$
Apply Lemma~\ref{lem:global-dynamic-pou} to
$\mathcal W$, $f_{\mathrm{in}}$, and $\varepsilon/4$. There exist an honest partition structure $\mathcal H=\hps$, the vectors
$(v_j)_{j=1}^J$, and
$
 f_*=\sum_{j=1}^Jv_j\psi_j.
$
Then
\begin{equation}\label{eq:main-pou-approximation}
 \|f_*-f_{\mathrm{in}}\|_\infty
 <\frac{\varepsilon}{4}.
\end{equation}
Corollary~\ref{cor:pou-window-control} provides
$K_{\mathcal H}\ni e$ and $\delta_{\mathcal H}\in(0,1)$.

By amenability, choose a nonempty finite
$(K_{\mathcal H},\delta_{\mathcal H})$-invariant set $A$ with $e\in A$.   Hence
Corollary~\ref{cor:pou-window-control}, applied to $A$ at the point
$gx$, gives
\begin{equation}\label{eq:main-A-order}
 \sum_{c\in A}\rOrdD{}(cgx)<a|A|
 \qquad(g\in G,\ x\in X).
\end{equation}
Thus the hypothesis of
Proposition~\ref{prop:finite-family-localizer} is satisfied.
Let $F\ni e$ be the finite window supplied by that proposition.

Choose $\varepsilon_A,\kappa>0$ so small that
\begin{equation}\label{eq:main-initial-budget}
 c_0
 :=
 a(1+\varepsilon_A)+\kappa
 <
 \frac m2.
\end{equation}
Fix $\varepsilon_R>0$.  Since $G$ is infinite, choose a nonempty
finite set $R\subset G$ such that
$
 \frac{1+\varepsilon_R}{|R|}<\kappa.
$
By \cite[Lemma~3.18]{naryshkin2024}, the action has the marker
property, as it has URPC.  Denote
\begin{equation}\label{eq:main-marker-test-set}
 \mathcal L
 :=
 \{e\}\cup R^{-1}R\cup F^{-1}F\cup\mathcal Q_F.
\end{equation}
Choose an $\mathcal L$-free  sweep-out set $U_0$.
Shrinking $U_0$ if necessary, choose an open sweep-out set
$U\Subset U_0$ which still has finitely many translates covering
$X$.  Then the choice of $\mathcal L$ ensures that
\begin{equation}\label{eq:main-QF-separation}
\begin{gathered}
 (tU)_{t\in F}\text{ is pairwise disjoint},\\
 (rU)_{r\in R}\text{ is pairwise disjoint},\\
 U\cap qU=\emptyset,\qquad q\in\mathcal Q_F.
\end{gathered}
\end{equation}

Apply Lemma \ref{lem:marker-comparison} to $U$, and let  $\theta>0$ be the resulting constant and $K_U\subset G$ the
resulting finite set.
Apply Lemma~\ref{lem:finite-invariance-propagation} with
$ E=RF^{-1},
$ $
 K=K_{\mathcal H},
 $ $
 \delta=\delta_{\mathcal H},
$
and  $\varepsilon_A,\varepsilon_R$.  This gives
$K_{\mathrm{sp}}\ni e$ and
$\delta_{\mathrm{sp}}\in(0,1)$ such that every sufficiently
$(K_{\mathrm{sp}},\delta_{\mathrm{sp}})$-invariant finite set $S$
satisfies
\begin{equation}\label{eq:main-shape-tests}
 \begin{split}
 |AS|&<(1+\varepsilon_A)|S|,\\
 AS&\text{ is }(K_{\mathcal H},\delta_{\mathcal H})\text{-invariant},\\
 |RF^{-1}S|&<(1+\varepsilon_R)|S|.
 \end{split}
\end{equation}

Set
$
 N_{\mathcal W}:=\mult(\mathcal W).
$
Choose $\alpha>0$ so small that
\begin{equation}\label{eq:main-alpha-choice}
 \alpha<\min\{\theta,\delta_{\mathrm{sp}},1\},
 \qquad
 d_0:=c_0+N_{\mathcal W}\alpha<\frac m2.
\end{equation}
 Choose $d$ such that
\begin{equation}\label{eq:main-d-choice}
 d_0<d<\frac m2.
\end{equation}

Finally choose $N\in\mathbb N$ so large that, whenever $n\ge N$,
\begin{equation}\label{eq:main-shape-size-tests}
 1+\lfloor\alpha n\rfloor
 \le
 \lceil\theta n\rceil,
 \qquad
 n>\frac1{d-d_0}.
\end{equation}

Choose a finite set $E\subset G$ with $|E|\ge N$, and  denote
\[
 K_*
 :=
 K_U\cup K_{\mathrm{sp}}\cup E\cup\{e\}.
\]
Choose a $(K_*,\alpha)$-URPC witness
\[
 \mathfrak W^0
 =
 \bigl(
 \{S_i\},\{V_i^0\},\{g_j\},\{U_j^0\}
 \bigr).
\]

Since $S_i$ is $(K_*,\alpha)$-invariant and
$\alpha<1$,
$
 \left|
 \bigcap_{k\in K_*}kS_i
 \right|
 >
 (1-\alpha)|S_i|
 >0.
$
Hence we may choose
$
 h_i\in\bigcap_{k\in K_*}kS_i.
$
Since $E\subset K_*$, for every $k\in E$ we have
$
 h_i\in kS_i,
$
and therefore
$
 k^{-1}h_i\in S_i.
$
Thus the map
$
 E\to S_i,
$ $
 k\mapsto k^{-1}h_i,
$
is well defined and injective.  Hence
\[
 |S_i|\ge |E|\ge N.
\]
Moreover,
$K_U,K_{\mathrm{sp}}\subset K_*$ and
$\alpha<\min\{\theta,\delta_{\mathrm{sp}}\}$, so every $S_i$
satisfies the three conditions in
\eqref{eq:main-shape-tests}, as well as
\eqref{eq:main-shape-size-tests}.
Thus the same witness simultaneously satisfies all shape
requirements of   Proposition~\ref{lem:urpc-buffer-router},
Lemma~\ref{lem:scale-uniform-coordinates}, and
Proposition~\ref{prop:global-perturbation}.

Apply  Proposition~\ref{lem:urpc-buffer-router} to this witness.
We obtain nested castles and remainder sets together with a finite family of 
routing pairs
\[
 K_\ell\subset O_\ell\Subset U,
 \qquad
 \ell\in\Lambda,
\]
 with the closures $\overline{O_\ell}$ pairwise disjoint.

The disjointness conditions on translates of $U$ in
\eqref{eq:main-QF-separation} allow us to apply
Proposition~\ref{prop:finite-family-localizer} to this finite family
with approximation tolerance $\varepsilon/4$.
Choose the resulting localizer $q_0\in C(X,[0,1]^m)$ and the
functions $(\rho_\ell)_{\ell\in\Lambda}$ as in
Remark~\ref{rem:prepared-localizer}. Then
\begin{equation}\label{eq:main-localizer-approximation}
 \|q_0-f_*\|_\infty<\frac{\varepsilon}{4},
\end{equation}
and $q_0$ satisfies the finite-window recognition condition
\begin{equation}\label{eq:main-localizer-separation}
 q_0^F(K_\ell)\cap q_0^F(X\setminus O_\ell)=\emptyset
 \qquad(\ell\in\Lambda).
\end{equation}

All hypotheses of Proposition~\ref{prop:global-perturbation} are now
satisfied.   Applying that proposition with
$\varepsilon/4$, we obtain
$q\in C(X,[0,1]^m)$ such that
\begin{equation}\label{eq:main-payload-approximation}
 \|q-q_0\|_\infty<\frac{\varepsilon}{4}
\end{equation}
and $q^G$ is an $\eta$-embedding.

Combining
\eqref{eq:main-inward-approximation},
\eqref{eq:main-pou-approximation},
\eqref{eq:main-localizer-approximation}, and
\eqref{eq:main-payload-approximation}, we obtain
\[
 \|q-f_{\mathrm{orig}}\|_\infty<\varepsilon.\qedhere
\]
\end{proof}

\section{Proof of Proposition~\ref{prop:finite-family-localizer}}\label{section:main}
\label{sec:recognition-proof}
The preceding section proves
Proposition~\ref{prop:eta-approximation}, and hence
Theorem~\ref{thm:main}, assuming the finite-window localization
result in Proposition~\ref{prop:finite-family-localizer}.
We now prove that proposition.
We construct perturbations of the form
$$
 q_\Theta(z)_j
 =
 f_*(z)_j+\langle\Xi(z),\theta^{(j)}\rangle,
 \qquad 1\le j\le m,
$$
where
$$
 \Theta=(\theta^{(1)},\ldots,\theta^{(m)})
 \in(\mathbb R^P)^m.
$$
The coordinate map $\Xi:X\to\mathbb R^P$ is built from
translated cutoff functions and polynomial coordinates of the
partition-of-unity map. Once $\Xi$ is fixed, choosing $\Theta$
sufficiently close to zero makes $q_\Theta$ uniformly close
to $f_*$.

This parametrization turns a failure of the finite-window
recognition condition into a system of linear equations in
the perturbation parameters $\theta^{(j)}$.
We describe the configurations giving rise to such failures
using finitely many semialgebraic parameter spaces.
By comparing their dimensions with the ranks of the
corresponding linear systems, we show that the set of
perturbation parameters for which recognition fails has
nowhere dense closure. We can therefore choose parameters
arbitrarily close to zero for which recognition holds.
We begin with the algebraic estimates needed for this argument.

\subsection{Semialgebraic avoidance and polynomial evaluations}

The next lemma is essentially a linear algebra rank counting argument.

\begin{lemma}
\label{lem:semialgebraic-incidence-avoidance}
Let $Z$ be a semialgebraic set with
$
 \dim Z\le D,
$
and let
\[
 M:Z\longrightarrow\operatorname{Mat}_{n\times P}(\mathbb R),
 \qquad
 c_\ell:Z\longrightarrow\mathbb R^n,
 \quad 1\le\ell\le m,
\]
be semialgebraic maps.  Suppose that
$
 \rank M(z)=r
$ for all $z\in Z.$
If
$
 D<mr,
$
then the bad parameter set
\[
 \mathcal B
 :=
 \left\{
 (\theta_1,\ldots,\theta_m)\in(\mathbb R^P)^m:
 \begin{array}{l}
 \text{there exists }z\in Z\text{ such that}\\
 M(z)\theta_\ell=c_\ell(z)
 \text{ for every }1\le\ell\le m
 \end{array}
 \right\}
\]
is semialgebraic and has nowhere dense closure in
$(\mathbb R^P)^m$.

More generally, let
$
 Z=\bigsqcup_{\alpha=1}^N Z_\alpha
$
be a finite semialgebraic partition such that $M$ has constant rank
$r_\alpha$ on $Z_\alpha$.  If
$\dim Z_\alpha<mr_\alpha$ for $1\le\alpha\le N$,
then the same conclusion holds.
\end{lemma}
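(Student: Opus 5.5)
The plan is to realize $\mathcal B$ as the image of an incidence set under a coordinate projection and then bound its dimension using Lemma~\ref{lem:basic-semialgebraic-facts}. Set
\[
 \mathcal I:=\bigl\{(z,\theta_1,\ldots,\theta_m)\in Z\times(\mathbb R^P)^m:
 M(z)\theta_\ell=c_\ell(z),\ 1\le\ell\le m\bigr\}.
\]
This set is semialgebraic: $Z\times(\mathbb R^P)^m$ is semialgebraic, and the defining conditions are polynomial identities in $\theta$ and in the entries of $M(z)$ and $c_\ell(z)$. Those entries are semialgebraic functions of $z$, so the conditions can be expressed through their graphs together with a projection. Then $\mathcal B=\operatorname{pr}_2(\mathcal I)$, and Lemma~\ref{lem:basic-semialgebraic-facts}(iv) shows that $\mathcal B$ is semialgebraic with $\dim\mathcal B\le\dim\mathcal I$.

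Next I estimate $\dim\mathcal I$ by applying Lemma~\ref{lem:basic-semialgebraic-facts}(v) to the continuous semialgebraic projection $\operatorname{pr}_1:\mathcal I\to Z$. Fix $z\in Z$. The fibre over $z$ is the product over $\ell$ of the solution sets $\{\theta_\ell: M(z)\theta_\ell=c_\ell(z)\}$. Each of these is either empty or an affine subspace of $\mathbb R^P$ of dimension $P-\rank M(z)=P-r$. So every nonempty fibre has dimension at most $m(P-r)$, and therefore
\[
 \dim\mathcal B\le\dim\mathcal I\le\dim Z+m(P-r)\le D+mP-mr<mP=\dim(\mathbb R^P)^m.
\]
Lemma~\ref{lem:basic-semialgebraic-facts}(vi) then shows that $\mathcal B$ has nowhere dense closure.

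For the general statement, apply the argument to each piece $Z_\alpha$ separately, with the restricted maps $M|_{Z_\alpha}$ and $c_\ell|_{Z_\alpha}$. These restrictions are semialgebraic, since their graphs are intersections of the original graphs with $Z_\alpha\times\mathbb R^{\cdot}$. This gives semialgebraic sets $\mathcal B_\alpha$ with $\dim\mathcal B_\alpha\le\dim Z_\alpha+m(P-r_\alpha)<mP$. Then $\mathcal B=\bigcup_\alpha\mathcal B_\alpha$ is semialgebraic, and by Lemma~\ref{lem:basic-semialgebraic-facts}(i) we get $\dim\mathcal B=\max_\alpha\dim\mathcal B_\alpha<mP$. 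Applying (vi) once more finishes the proof. Alternatively, a finite union of sets with nowhere dense closure again has nowhere dense closure.

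The only step needing care is the semialgebraicity of $\mathcal I$. Since $M$ and $c_\ell$ are given only by semialgebraic graphs, I would write
\[
 \mathcal I=\operatorname{pr}\Bigl(\bigl\{(z,A,b,\theta): (z,A)\in\Gamma_M,\ (z,b_\ell)\in\Gamma_{c_\ell},\ A\theta_\ell=b_\ell\bigr\}\Bigr),
\]
which is semialgebraic by the Tarski--Seidenberg principle contained in (iv). Everything else is a direct application of the listed facts.
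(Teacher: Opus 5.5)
Your proposal is correct and follows the paper's proof essentially step for step: the incidence set $\mathcal I$, the fibre bound $m(P-r)$ via Lemma~\ref{lem:basic-semialgebraic-facts}(v), projection to parameter space via (iv), nowhere density via (vi), and the stratified case by applying the argument to each $Z_\alpha$ and using (i). Your explicit justification that $\mathcal I$ is semialgebraic, via the graphs $\Gamma_M,\Gamma_{c_\ell}$ and a projection, supplies a detail that the paper only asserts.
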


\begin{proof}
Consider the  set
\[
 \mathcal I
 :=
 \left\{
 (z,\theta_1,\ldots,\theta_m)
 \in Z\times(\mathbb R^P)^m:
 M(z)\theta_\ell=c_\ell(z)
 \text{ for every }1\le\ell\le m
 \right\},
\]
which is semialgebraic.

Let
$
 \pi_Z:\mathcal I\to Z
$
be the coordinate projection.  Fix $z\in Z$.  If
$\pi_Z^{-1}(z)$ is nonempty, then, for each $1\le\ell\le m$, the
solution set of
$M(z)\theta_\ell=c_\ell(z)$
is an affine subspace of $\mathbb R^P$ of dimension $P-r$.
Consequently,
$$\dim \pi_Z^{-1}(z)=m(P-r).$$
By Lemma~\ref{lem:basic-semialgebraic-facts} (v) and the assumption
$D<mr$,
 \begin{align}
 \begin{aligned}
 \dim\mathcal I
 &\le \dim\pi_Z(\mathcal I)+m(P-r)\le \dim Z+m(P-r)\\
 &\le D+m(P-r)<mP.
 \end{aligned}
 \end{align}

Let
\[
 \pi_\theta:
 Z\times(\mathbb R^P)^m
 \longrightarrow
 (\mathbb R^P)^m
\]
be the parameter projection.  Then
$
 \mathcal B=\pi_\theta(\mathcal I).
$
By Lemma~\ref{lem:basic-semialgebraic-facts} (iv),
$\mathcal B$ is semialgebraic and
$\dim\mathcal B \le \dim\mathcal I <mP$.
Lemma~\ref{lem:basic-semialgebraic-facts} (vi) implies that
$\mathcal B$ has nowhere dense closure.

For the final assertion, apply the preceding argument to each
$Z_\alpha$.  If $\mathcal B_\alpha$ denotes the corresponding bad
parameter set, then
$\dim\mathcal B_\alpha<mP$ for $1\le\alpha\le N$.
Since
$
 \mathcal B=\bigcup_{\alpha=1}^N\mathcal B_\alpha,
$
the finite-union property in
Lemma~\ref{lem:basic-semialgebraic-facts} (i) gives
\[
 \dim\mathcal B
 =
 \max_{1\le\alpha\le N}\dim\mathcal B_\alpha
 <mP.
\]
Hence $\mathcal B$ again has nowhere dense closure.
\end{proof}

 {We next show that evaluations of sufficiently many monomials at
distinct points give linearly independent vectors.}

\begin{lemma}
\label{lem:finite-veronese}
Let $z_1,\ldots,z_N$ be distinct points of $\Delta^{J-1}$.  If
$d\ge N-1$, then the degree-$d$ Veronese evaluation vectors
$\Phi_d(z_1),\ldots,\Phi_d(z_N)$ are linearly independent.
\end{lemma}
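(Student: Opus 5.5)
The plan is to argue by contradiction via a dual vector, using the interpolation property \eqref{eq:interpolation}. Suppose there are coefficients $c_1,\ldots,c_N\in\mathbb R$, not all zero, with
\[
 \sum_{i=1}^N c_i\,\Phi_d(z_i)=0 .
\]
Pairing this vector identity with an arbitrary $a\in\mathbb R^{M_{J,d}}$ and using the representation \eqref{eq:v-rep}, we obtain
\[
 \sum_{i=1}^N c_i P(z_i)=0
\]
for every polynomial $P$ on $\mathbb R^J$ of degree at most $d$. So it suffices to exhibit, for a chosen index $k$ with $c_k\ne0$, a polynomial $P$ of degree at most $d$ with $P(z_k)=1$ and $P(z_i)=0$ for $i\ne k$. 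Then the sum above equals $c_k\ne0$, a contradiction.

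Such a $P$ comes directly from \eqref{eq:interpolation}. The monomial family $\{z^\alpha:\alpha\in\mathcal A_{J,d}\}$ is $(d+1)$-interpolating on $\Delta^{J-1}$. Since $N\le d+1$ and the points $z_i$ are pairwise distinct, we may prescribe the values $u_i=\delta_{ik}$. This gives $\theta$ with $\sum_\alpha\theta_\alpha z_i^\alpha=\delta_{ik}$, and we take $P(z)=\langle\theta,\Phi_d(z)\rangle$.

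If one prefers not to rely on the cited interpolation fact, $P$ can be built by hand. For each $i\ne k$, pick a coordinate $j_i$ with $(z_k)_{j_i}\ne(z_i)_{j_i}$; it exists because $z_i\ne z_k$. Set
\[
 P(z)=\prod_{i\ne k}\frac{z_{j_i}-(z_i)_{j_i}}{(z_k)_{j_i}-(z_i)_{j_i}}.
\]
This is a product of $N-1\le d$ affine factors, so its degree is at most $d$, and it has the required values at the points $z_i$. Expanding it yields a linear combination of monomials $z^\alpha$ with $|\alpha|\le d$, that is, an expression of the form $\langle a,\Phi_d(z)\rangle$.

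Neither step is really hard; the only point to check is the degree bound $N-1\le d$, which is exactly where the hypothesis $d\ge N-1$ enters.
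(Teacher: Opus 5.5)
Your proof is correct and is the paper's argument. Both pair the assumed linear relation with the coefficient vector of a polynomial of degree at most $d$ that takes the values $\delta_{ik}$ at the $z_i$; this polynomial comes from the $(d+1)$-interpolating property of the monomials, and the pairing forces each coefficient to vanish. Your explicit product of $N-1\le d$ affine factors, each separating $z_k$ from one $z_i$ in a coordinate where they differ, is a valid self-contained replacement for the cited interpolation fact.
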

\begin{proof}
Suppose that
$$
 \sum_{i=1}^N c_i\Phi_d(z_i)=0.
$$
Fix $j\in\{1,\ldots,N\}$. Since $N\le d+1$, the interpolation
property \eqref{eq:interpolation} provides coefficients
$a_{j,\alpha}\in\mathbb R$ such that the polynomial
$$
 P_j(z):=\sum_{\alpha\in\mathcal A_{J,d}}a_{j,\alpha}z^\alpha
$$
satisfies $P_j(z_i)=\delta_{ij}$ for every $i$.
Taking the scalar product of the assumed vector identity with
$a_j:=(a_{j,\alpha})_{\alpha\in\mathcal A_{J,d}}$, we obtain
$$
 0=\sum_{i=1}^N c_i\langle a_j,\Phi_d(z_i)\rangle
  =\sum_{i=1}^N c_iP_j(z_i)=c_j.
$$
Since $j$ was arbitrary, all coefficients vanish.
\end{proof}
\subsection{Shifted differences and separated marker translates}
 {The following two lemmas give a rank bound for shifted coordinate
differences and a uniqueness statement for visits to translates of a
separated set. They will be used when comparing two finite orbit names.}

\begin{lemma}
\label{lem:anchor-rank}
 {Let $G$ be a group, let $F\subset G$ be nonempty and finite, let
$h\in G\setminus\{e\}$, and let $\beta\in[0,1]$.
In $\mathbb R^F$, let $(e_t)_{t\in F}$ denote the standard basis and
consider}
\[
 a_s
 :=
 e_{s}
 -
 \begin{cases}
 \beta e_{sh},& sh\in F,\\
 0,& sh\notin F,
 \end{cases}
 \qquad s\in F.
\]
Then
\[
 \rank\{a_s:s\in F\}\ge \frac{|F|}{2}.
\]
Moreover, if $\beta<1$, then the vectors $a_s$, $s\in F$, are
linearly independent.
\end{lemma}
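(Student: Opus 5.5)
We study the linear map $T:\mathbb R^F\to\mathbb R^F$ defined on the basis by $T e_s := a_s$, i.e.\ $T=I-\beta N$, where $N e_s:=e_{sh}$ if $sh\in F$ and $N e_s:=0$ otherwise. The key structural fact is that right multiplication by $h$ is injective on $G$, so $N$ sends distinct basis vectors to distinct basis vectors or to zero. We view $N$ as the adjacency map of the directed graph on $F$ with edges $s\to sh$ (whenever $sh\in F$). Each vertex has out-degree at most one and in-degree at most one, so the graph splits into disjoint directed paths and directed cycles. Since $h\neq e$ there are no loops, so every cycle has length at least $2$. Cycles can occur, for instance when $h$ has finite order and $F$ contains a full coset $s\langle h\rangle$. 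Thus $\mathbb R^F$ decomposes into a direct sum of the coordinate subspaces spanned by the components, each is $T$-invariant, and the rank of $\{a_s\}$ is the sum of the ranks of $T$ on the components.

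On a path component $s_1\to s_2\to\cdots\to s_k$ (with $s_kh\notin F$), the matrix of $T$ in the ordered basis $e_{s_1},\dots,e_{s_k}$ is unitriangular: $a_{s_i}=e_{s_i}-\beta e_{s_{i+1}}$ for $i<k$ and $a_{s_k}=e_{s_k}$. So these $k$ vectors are independent for every $\beta\in[0,1]$. On a cycle component of length $k\ge2$, $T$ acts as $I-\beta C$ with $C$ a cyclic permutation matrix, so $\det=1-\beta^k$. If $\beta<1$ this is nonzero, giving full rank $k$. If $\beta=1$ the vectors $e_{s_i}-e_{s_{i+1}}$ sum to zero, but any $k-1$ of them are independent, so the rank is $k-1\ge k/2$ because $k\ge2$.

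Summing over components, the rank is at least $\sum \max(k/2,\dots)\ge |F|/2$ in general. When $\beta<1$ every component has full rank, so the vectors $a_s$ are linearly independent. The only point needing care, though not hard, is justifying the in-degree bound (injectivity of $s\mapsto sh$) and excluding cycles of length $1$ via $h\neq e$; that is exactly where the hypothesis on $h$ enters, and it yields the factor $1/2$ as the worst case of $2$-cycles with $\beta=1$.
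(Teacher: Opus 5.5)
Your proposal is correct and follows essentially the same route as the paper. You both split the graph $s\to sh$ on $F$ into paths, which give unitriangular blocks, and cycles of length at least $2$, which give blocks $I-\beta P_k$ with determinant $1-\beta^k$ and lose exactly one rank when $\beta=1$; the only cosmetic difference is that the paper writes the total rank as at least $|F|-c$ with $c\le |F|/2$ cycles, whereas you bound each component's rank below by half its size, which is equivalent.
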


\begin{proof}
Consider the directed graph with vertex set $F$ and with an edge
\[
 s\longrightarrow sh
\]
whenever $s,sh\in F$.  Since right multiplication by $h$ is
injective, every vertex has at most one incoming edge and at most one
outgoing edge.  Hence every connected component is either a directed
path or a directed cycle.

First consider a path
\[
 s_1\longrightarrow s_2\longrightarrow\cdots
 \longrightarrow s_r,
\]
so that
$s_i h=s_{i+1}$ for $1\le i<r$, and $s_rh\notin F$.
The corresponding vectors are
\[
 \begin{aligned}
 a_{s_1}&=e_{s_1}-\beta e_{s_2},\\
 a_{s_2}&=e_{s_2}-\beta e_{s_3},\\
 &\hspace{1.2cm}\vdots\\
 a_{s_{r-1}}&=e_{s_{r-1}}
              -\beta e_{s_r},\\
 a_{s_r}&=e_{s_r}.
 \end{aligned}
\]
Relative to the ordered basis
$
e_{s_1},\ldots,e_{s_r},
$
their coefficient matrix is upper triangular with diagonal entries
equal to $1$.  Thus a path with $r$ vertices contributes rank
$r$.

Now consider a directed cycle
\[
 s_1\longrightarrow s_2\longrightarrow\cdots
 \longrightarrow s_r\longrightarrow s_1.
\]
Relative to the same ordering of the vertices, the corresponding
matrix is
$
 I_r-\beta P_r,
$
where
\[
 P_r=
 \begin{pmatrix}
 0&1&0&\cdots&0\\
 0&0&1&\cdots&0\\
 \vdots&&&\ddots&\vdots\\
 0&0&0&\cdots&1\\
 1&0&0&\cdots&0
 \end{pmatrix}
\]
is the cyclic permutation matrix.  Since
$\det(I_r-\beta P_r)=1-\beta^r$,
the cycle contributes full rank $r$ whenever $\beta<1$.

If $\beta=1$, then
$
 I_r-P_r
$
has one-dimensional kernel, spanned by
$
 (1,\ldots,1),
$
and therefore has rank $r-1$.  Thus, when $\beta=1$, each cycle
causes a loss of exactly one rank.

Finally, since $h\ne e$, there are no cycles of length one:
$sh=s$ would imply $h=e$.  Hence every cycle has at least two
vertices.  If $c$ denotes the number of cycle components, then
$
 c\le \frac{|F|}{2}.
$
Summing the rank contributions of all path and cycle components gives
$\rank\{a_s:s\in F\} \ge |F|-c \ge \frac{|F|}{2}$.
When $\beta<1$, no component loses rank, so the vectors
$a_s$, $s\in F$, are linearly independent.
\end{proof}

\begin{lemma}
\label{lem:anchor-synchronization}
Let $F\subset G$ be finite and    {denote}
\[
 \mathcal Q_F
 :=
 \{t_2^{-1}s_2s_1^{-1}t_1:
   s_1,s_2,t_1,t_2\in F\}
 \setminus\{e\}.
\]
Let $U\subset X$ satisfy
$
 U\cap qU=\emptyset
$, $q\in\mathcal Q_F.$ Let $y\in X$.
Suppose that for $i=1,2$, there exist $v_i\in U$ and $s_i,t_i\in F$ such that $ s_i y=t_i v_i$.
Then
$v_1=v_2$ and $s_1^{-1}t_1=s_2^{-1}t_2$.
In particular, if $v_i\in O_{k_i}$, where the sets
$(O_k)_{k\in\Lambda}$ are pairwise disjoint subsets of $U$, then
$
 k_1=k_2.
$
\end{lemma}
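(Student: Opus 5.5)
From $s_1y=t_1v_1$ and $s_2y=t_2v_2$ we can solve for $y$ in two ways, $y=s_1^{-1}t_1v_1=s_2^{-1}t_2v_2$, and hence
\[
 v_2=t_2^{-1}s_2s_1^{-1}t_1\,v_1 .
\]
Put $q:=t_2^{-1}s_2s_1^{-1}t_1$, which is one of the elements listed in the definition of $\mathcal Q_F$ before the identity is removed. The plan is to show $q=e$, which gives both conclusions at once.

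Suppose $q\ne e$. Then $q\in\mathcal Q_F$, so $U\cap qU=\emptyset$ by hypothesis. But $v_2\in U$ and $v_2=qv_1\in qU$ because $v_1\in U$, a contradiction. Hence $q=e$, so $v_2=v_1$. Moreover $t_2^{-1}s_2s_1^{-1}t_1=e$ rearranges to $s_2^{-1}t_2=s_1^{-1}t_1$, which is the second assertion. No freeness of the action is needed here, since the group identity comes straight from $q=e$.

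For the final statement, the common point $v:=v_1=v_2$ lies in $O_{k_1}\cap O_{k_2}$. The $O_k$ are pairwise disjoint, so $k_1=k_2$.

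This is direct algebra with no real obstacle. The only point to check is that $q$ has exactly the form $t_2^{-1}s_2s_1^{-1}t_1$ used to define $\mathcal Q_F$; it does, and it agrees with the earlier notation $\mathcal Q_F=R_F^{-1}R_F\setminus\{e\}$ with $R_F=F^{-1}F$, because $(s_2^{-1}t_2)^{-1}(s_1^{-1}t_1)=t_2^{-1}s_2s_1^{-1}t_1$.
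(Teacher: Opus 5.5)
Your proof is correct and follows the paper's argument essentially verbatim: you write $v_2=qv_1$ with $q=t_2^{-1}s_2s_1^{-1}t_1$, use $U\cap qU=\emptyset$ to force $q=e$, read off $v_1=v_2$ and $s_1^{-1}t_1=s_2^{-1}t_2$, and then conclude $k_1=k_2$ from the pairwise disjointness of the $O_k$. Your check that $q$ lies in $R_F^{-1}R_F$ with $R_F=F^{-1}F$, matching the paper's earlier definition of $\mathcal Q_F$, is also correct.
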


\begin{proof}
From
$
 s_1y=t_1v_1,
$ and $
 s_2y=t_2v_2,
$
we obtain
$v_2=qv_1$,
where
$
 q:=t_2^{-1}s_2s_1^{-1}t_1.
$
If $q\ne e$, then $q\in\mathcal Q_F$, while
$v_2=qv_1\in U\cap qU$,
a contradiction.  Hence $q=e$, which gives
$v_1=v_2$ and $s_1^{-1}t_1=s_2^{-1}t_2$.
The final assertion follows from the disjointness of the $O_k$'s.
\end{proof}

\subsection{Coordinate values and their dimension bound}
 {We now record the distinct partition-coordinate values occurring
in two orbit names. The notation in this subsection is local to the
following lemma.}

Let $D_1,\ldots,D_J$ be a finite closed cover of $X$, and let
\[
 \boldsymbol{\psi}=(\psi_1,\ldots,\psi_J):X\longrightarrow\Delta^{J-1}
\]
be a partition of unity satisfying
$
 \supp\psi_j\subset D_j
 $, $1\le j\le J.
$
  {Write $\mathcal D:=(D_j)_{j=1}^J$.}
For $z\in X$, recall
$\rLocD{}(z)=\{j:z\in D_j\}$ and $\rOrdD{}(z)=|\rLocD{}(z)|-1$.
Then
\begin{equation}\label{eq:obser.}
     \boldsymbol{\psi}(z)\in\Delta_{\rLocD{}(z)}.
\end{equation}

Fix $x,y\in X$ and finite
sets $A,F\subset G$, with $e\in A$.    {Denote}
\[
 H:=AF,
 \qquad
 H^\circ:=\bigcap_{c\in A}cF.
\]
For $u\in H$, write
$p_u:=\boldsymbol{\psi}(ux)$ and $q_u:=\boldsymbol{\psi}(uy)$.

We call $u\in H$ a \textbf{non-loop site}  if
$
 p_u\ne q_u.
$
Let $\mathscr V$ be the set of all values among the $p_u$'s and
$q_u$'s which occur at a non-loop site, and let
\[
 \mathscr V^\circ
 :=
 \{v\in\mathscr V:
   v=p_u\text{ or }v=q_u
   \text{ for some non-loop }u\in H^\circ\}.
\]

For $v\in\mathscr V$, define
\[
 I(v)
 :=
 \bigcap_{\{u:p_u=v\}} \rLocD{}(ux)
 \cap
 \bigcap_{\{u:q_u=v\}} \rLocD{}(uy),
\]
where an empty intersection is omitted, and    {denote}
$d(v):=|I(v)|-1$.
Notice that $I(v)\ne\emptyset$, since by \eqref{eq:obser.},
\[
 v\in
 \bigcap_{\{u:p_u=v\}}\Delta_{\rLocD{}(ux)}
 \cap
 \bigcap_{\{u:q_u=v\}}\Delta_{\rLocD{}(uy)}
 =
 \Delta_{I(v)}.
\]

The matrix below records the differences that appear in the
collision equations. For each $c\in A$, it assigns a coefficient
$+1$ to $p_{cs}$ and $-1$ to $q_{cs}$ whenever these values belong
to $\mathscr V^\circ$; if the two values coincide, their
contributions cancel. Its rank will provide a lower bound for
the rank of the coefficient matrix in the linear equations
expressing failure of finite-window recognition.

Finally, define the \textbf{core incidence matrix} $B^\circ$, with rows
indexed by $s\in F$ and columns indexed by
$(c,v)\in A\times\mathscr V^\circ$, by
\[
 B^\circ_{s,(c,v)}
 :=
 \1_{\{p_{cs}=v\}}
 -
 \1_{\{q_{cs}=v\}},
\]
and set
$r_\circ:=\rank B^\circ$.

 {A \textbf{state} is an element $v\in\mathscr V$, that is, a distinct
value occurring at a non-loop site. It is a \textbf{core state} if
$v\in\mathscr V^\circ$. The number
$d(v)=\dim\Delta_{I(v)}$ is the dimension of its common coordinate
face. The next lemma bounds the sum of these dimensions: the core
contribution is bounded by the rank of $B^\circ$, and the remaining
contribution by $|H\setminus H^\circ|$.}

\begin{lemma}
\label{lem:state-rank-dimension}
   {Denote} $b_F:=|H\setminus H^\circ|$.  Assume that, for every $s\in F$,
\begin{equation}\label{eq:state-order-assumption}
 \sum_{c\in A} \rOrdD{}(csx)<a|A|,
 \qquad
 \sum_{c\in A} \rOrdD{}(csy)<a|A|.
\end{equation}
Then
\[
 \sum_{v\in\mathscr V^\circ}d(v)
 \le 2ar_\circ
\qquad\text{and}\qquad
 \sum_{v\in\mathscr V\setminus\mathscr V^\circ}d(v)
 \le 2(J-1)b_F.
\]
Consequently,
\[
 \sum_{v\in\mathscr V}d(v)
 \le
 2ar_\circ+2(J-1)b_F.
\]
\end{lemma}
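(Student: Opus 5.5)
The plan is to prove the two bounds separately, using throughout that $I(v)\subset\rLocD{}(ux)$ whenever $p_u=v$, and $I(v)\subset\rLocD{}(uy)$ whenever $q_u=v$. Consequently
\[
 d(v)\le\rOrdD{}(ux)\ \text{ if }p_u=v,
 \qquad
 d(v)\le\rOrdD{}(uy)\ \text{ if }q_u=v.
\]

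The non-core bound is a counting argument. Every $v\in\mathscr V\setminus\mathscr V^\circ$ equals $p_u$ or $q_u$ for some non-loop site $u$. Such a $u$ cannot lie in $H^\circ$, since otherwise $v$ would be a core state. So $u\in H\setminus H^\circ$, and each such $u$ contributes at most the two values $p_u$ and $q_u$. Hence $|\mathscr V\setminus\mathscr V^\circ|\le 2b_F$. Since $d(v)=|I(v)|-1\le J-1$, the second inequality follows.

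For the core bound, the first step is to show that every column of $B^\circ$ is nonzero. Fix $c\in A$ and $v\in\mathscr V^\circ$, and let $u\in H^\circ$ be a non-loop site at which $v$ occurs. Since $u\in cF$, we have $s:=c^{-1}u\in F$ and $cs=u$. In row $s$, the entry in column $(c,v)$ is $\1_{\{p_u=v\}}-\1_{\{q_u=v\}}$. This entry is $\pm1$, because $p_u\ne q_u$ and $v$ equals exactly one of them. So the column $(c,v)$ is nonzero.

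Next, choose a set $F'\subset F$ of $r_\circ$ rows forming a basis of the row space of $B^\circ$. A column vanishing on the rows $F'$ vanishes on every row, since each row is a linear combination of the rows in $F'$. Therefore, for each pair $(c,v)$ there is some $s\in F'$ with $v\in\{p_{cs},q_{cs}\}$. Record $\sigma_c(v)=(s,x)$ if $v=p_{cs}$, and otherwise $\sigma_c(v)=(s,y)$.

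For fixed $c$, the assignment $v\mapsto\sigma_c(v)$ is injective. Indeed, distinct states cannot both equal $p_{cs}$, nor can they both equal $q_{cs}$. Combining this with the inequality $d(v)\le\rOrdD{}$ from the first paragraph gives
\[
 \sum_{v\in\mathscr V^\circ}d(v)
 \le
 \sum_{s\in F'}\bigl(\rOrdD{}(csx)+\rOrdD{}(csy)\bigr)
 \qquad(c\in A).
\]

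Summing over $c\in A$ and using \eqref{eq:state-order-assumption} for each $s\in F'$ yields
\[
 |A|\sum_{v\in\mathscr V^\circ}d(v)
 \le
 \sum_{s\in F'}\sum_{c\in A}\bigl(\rOrdD{}(csx)+\rOrdD{}(csy)\bigr)
 \le
 2a|A|\,r_\circ .
\]
Dividing by $|A|$ gives the first inequality. The last display of the lemma is then the sum of the two bounds.

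The main obstacle is avoiding a spurious factor $|A|$ in the core bound. This is handled by charging every state once for each $c\in A$, against the same set $F'$ of $r_\circ$ basis rows, and then averaging over $c$. The averaging works only because every core site $u\in H^\circ$ lies in $cF$ for all $c\in A$.
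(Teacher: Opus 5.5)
Your proof is correct and follows the paper's argument essentially step for step. The steps coincide: columns of $B^\circ$ are nonzero because core sites lie in $H^\circ\subset cF$; a basis $F'$ of $r_\circ$ rows is chosen; each core state is charged once per $c\in A$ against $F'\times\{x,y\}$ and then averaged over $c$; and non-core states inject into $(H\setminus H^\circ)\times\{+,-\}$. Your injective map $\sigma_c$ is only a repackaging of the paper's partition $\mathscr V^\circ=\bigsqcup_{s}\mathscr V_{s,c}$, and both versions implicitly use that the order function is nonnegative because $\mathcal D$ covers $X$.
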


\begin{proof}
We first prove that
\begin{equation}\label{eq:eq_1}
    \sum_{v\in\mathscr V^\circ}d(v)
 \le 2ar_\circ.
\end{equation}

Fix $v\in\mathscr V^\circ$ and $c\in A$.  Choose a non-loop site
$u\in H^\circ$ such that
$
 v=p_u
$ or $
 v=q_u.
$
Since $u\in H^\circ\subset cF$,
$
 s:=c^{-1}u\in F.
$
As $p_u\ne q_u$, exactly one of $p_u,q_u$ equals $v$.  Hence
$B^\circ_{s,(c,v)}=\pm1$.
Thus every column of $B^\circ$ is nonzero.

If $r_\circ=0$, then $B^\circ$ is the zero matrix. This is possible only when
$\mathscr V^\circ=\emptyset$. Thus, \eqref{eq:eq_1} holds.

Suppose now that $r_\circ>0$.  Choose
$R_0\subset F$, with $|R_0|=r_\circ$, so that the rows indexed by
$R_0$ form a basis of the row space of $B^\circ$.
For every column $(c,v)$, there exists $s\in R_0$ such that
$
 B^\circ_{s,(c,v)}\ne0.
$
Indeed, otherwise that column would vanish on all basis rows, and
hence on every row of $B^\circ$, contradicting the fact that it is
nonzero. 

For each column $(c,v)$, fix
$
 s(c,v)\in R_0
$
such that
$
 B^\circ_{s(c,v),(c,v)}\ne0.
$

For $s\in R_0$ and $c\in A$,    {denote}
\[
 \mathscr V_{s,c}
 :=
 \{v\in\mathscr V^\circ:s(c,v)=s\}.
\]
  {For each fixed $c\in A$, every $v\in\mathscr V^\circ$ is assigned
exactly one index $s(c,v)\in R_0$. Thus these sets are pairwise
disjoint and cover $\mathscr V^\circ$, so}
\begin{equation}\label{eq:disjoin}
    \mathscr V^\circ=\bigsqcup_{s\in R_0}\mathscr V_{s,c}.
\end{equation}
By the definition of $B^\circ$,
$
 B^\circ_{s,(c,v)}
 =
 \1_{\{p_{cs}=v\}}
 -
 \1_{\{q_{cs}=v\}},
$
  {and for $v\in\mathscr V_{s,c}$ this entry is nonzero by the
choice of $s(c,v)$. Therefore}
$
 \mathscr V_{s,c}\subset\{p_{cs},q_{cs}\}.
$
Notice also that if $p_{cs}=q_{cs}$, then
$\mathscr V_{s,c}=\emptyset$.

If $v=p_{cs}$,  {the occurrence $(cs,+)$ is among those used to define
$I(v)$, and hence}
$
 I(v)\subset \rLocD{}(csx),
$
and hence
$
 d(v)\le \rOrdD{}(csx).
$
Similarly, if $v=q_{cs}$, then
 {$I(v)\subset \rLocD{}(csy)$ and hence}
$
 d(v)\le \rOrdD{}(csy).
$
So
\[
 \sum_{v\in\mathscr V_{s,c}}d(v)
 \le
 \rOrdD{}(csx)+\rOrdD{}(csy).
\]

By \eqref{eq:disjoin},
 {summing over $c\in A$ and applying
the assumed order bounds for each $s\in R_0$,}
one has
 {
\begin{align*}
 |A|\sum_{v\in\mathscr V^\circ}d(v)
 &=\sum_{c\in A}\sum_{s\in R_0}
   \sum_{v\in\mathscr V_{s,c}}d(v)\\
 &\le\sum_{s\in R_0}\sum_{c\in A}
   \bigl(\rOrdD{}(csx)+\rOrdD{}(csy)\bigr)\\
 &<\sum_{s\in R_0}2a|A|
 =2a|A|r_\circ.
\end{align*}
For the strict inequality, apply
\eqref{eq:state-order-assumption} to each $s\in R_0$; the final
identity uses $|R_0|=r_\circ$.
}
Dividing by $|A|$ gives
\[
 \sum_{v\in\mathscr V^\circ}d(v)
 \le 2ar_\circ.
\]

It remains to estimate the values occurring only outside the core.
For each
$
 v\in\mathscr V\setminus\mathscr V^\circ,
$
choose a non-loop site
$
 u_v\in H\setminus H^\circ
$
and one of its two endpoints at which $v$ occurs.  This gives an
injective map
\[
 \mathscr V\setminus\mathscr V^\circ
 \longrightarrow
 (H\setminus H^\circ)\times\{+,-\}.
\]
Hence
\[
 |\mathscr V\setminus\mathscr V^\circ|
 \le
 2|H\setminus H^\circ|.
\]
Since $d(v)\le J-1$ for every $v\in\mathscr V$, we obtain
\[
 \sum_{v\in\mathscr V\setminus\mathscr V^\circ}d(v)
 \le
 2(J-1)|H\setminus H^\circ|.\qedhere
\]
\end{proof}

\subsection{Construction of the perturbation and exclusion of collisions}
\label{subsec:recognition-construction}
We return to the hypotheses of
Proposition~\ref{prop:finite-family-localizer}. The proof first chooses $F$
and defines the perturbation family. It then describes failed recognition
by finitely many types of equations and applies
Lemma~\ref{lem:semialgebraic-incidence-avoidance}.

\begin{proof}[Proof of Proposition~\ref{prop:finite-family-localizer}]
Since $a<m/2$, choose $\eta_F>0$ such that
\begin{equation}\label{eq:localizer-eta-choice}
 2(J-1)\eta_F<\frac{m-2a}{4}.
\end{equation}
Amenability of $G$ gives a nonempty finite set
$F$ with $e\in F$ such that
\begin{equation}\label{eq:localizer-F-choice}
 \sum_{c\in A}|cF\triangle F|<\eta_F|F|
 \qquad\text{and}\qquad
 |F|>\frac{4}{m-2a}.
\end{equation}
Set
\[
 H:=AF,
 \qquad
 H^\circ:=\bigcap_{c\in A}cF,
 \qquad
 b_F:=|H\setminus H^\circ|.
\]
Since $e\in A$, one has
\begin{equation}\label{eq:localizer-boundary-inclusion}
 H\setminus H^\circ
 \subset
 \bigcup_{c\in A}(cF\triangle F).
\end{equation}
Indeed, let $g\in H\setminus H^\circ$. If $g\notin F$, then
$g\in cF\setminus F$ for some $c\in A$. If $g\in F$, then
$g\notin H^\circ$ gives $g\in F\setminus cF$ for some $c\in A$.
In either case $g\in cF\triangle F$. Consequently,
\[
 b_F\le\sum_{c\in A}|cF\triangle F|<\eta_F|F|.
\]
It follows from
\eqref{eq:localizer-F-choice} that $b_F<\eta_F|F|$. Together with
\eqref{eq:localizer-eta-choice} and the lower bound on $|F|$, this gives
\begin{equation}\label{eq:localizer-boundary-budget}
 2(J-1)b_F+1
 <\frac{m-2a}{2}|F|.
\end{equation}
 {More explicitly, the lower bound on $|F|$ gives
$1<(m-2a)|F|/4$, and therefore
\[
 2(J-1)b_F+1\le 2(J-1)\eta_F|F|+1<\frac{m-2a}{4}|F|+1
 <\frac{m-2a}{2}|F|.
\]}

For each $\ell\in\Lambda$, choose a continuous function
$\rho_\ell:X\to[0,1]$ such that
\begin{equation}\label{eq:localizer-bump-functions}
 \rho_\ell=1\ \text{on }K_\ell,
 \qquad
 \supp\rho_\ell\subset O_\ell.
\end{equation}
For $t\in F$ and $\ell\in\Lambda$, define
\begin{equation}\label{eq:localizer-anchor-functions}
 b_{t,\ell}(z)
 :=
  \rho_\ell(t^{-1}z),\qquad z\in X,
\end{equation}
This is continuous.  

Choose an integer $d_V\ge 2|H|-1$, and let
$\Phi_{d_V}:\Delta^{J-1}\to\mathbb R^{M_{J,d_V}}$ be the Veronese map
(see Subsection \ref{subsec:ve}).   {Denote}
$P:=|F||\Lambda|+|A|M_{J,d_V}$
and
 {
\[
 \Xi:X\to\mathbb R^P,\qquad
 \Xi(z):=
 \left(
  (b_{t,\ell}(z))_{(t,\ell)\in F\times\Lambda},
  (\Phi_{d_V}(\boldsymbol{\psi}(cz)))_{c\in A}
 \right),\quad z\in X.
\]
}
For
$
 \Theta=(\theta^{(1)},\ldots,\theta^{(m)})
 \in(\mathbb R^P)^m,
$
define
\begin{equation}\label{eq:localizer-parametric-map}
 q_\Theta(z)_j
 :=f_*(z)_j+\langle\Xi(z),\theta^{(j)}\rangle,
 \qquad 1\le j\le m.
\end{equation}
We shall show that the set of $\Theta$ for which
\eqref{eq:localizer-separation} fails has nowhere dense closure.

Fix $\ell\in\Lambda$, $x\in K_\ell$, and
 {$y\in X\setminus O_\ell$. For $s\in F$ and $1\le j\le m$,
\eqref{eq:localizer-parametric-map} gives
\[
 q_\Theta(sx)_j-q_\Theta(sy)_j
 =f_*(sx)_j-f_*(sy)_j
 +\langle\Xi(sx)-\Xi(sy),\theta^{(j)}\rangle.
\]
Thus a collision gives linear equations in the vectors
$\theta^{(j)}$. We now describe their coefficients.
For $s\in F$, define the anchor row}
\[
 A_s(x,y)
 :=
 \bigl(b_{t,k}(sx)-b_{t,k}(sy)\bigr)_{(t,k)\in F\times\Lambda}.
\]
Since $x\in K_\ell$, \eqref{eq:localizer-bump-functions} gives
$b_{s,\ell}(sx)=1$, since
$b_{s,\ell}(sx)=\rho_\ell(s^{-1}sx)=\rho_\ell(x)$. If $b_{t,k}(sx)\ne0$, then
 {$\rho_k(t^{-1}sx)\ne0$ and hence} $sx\in tO_k\subset tU$ and $sx\in sU$. The pairwise disjointness of
$(tU)_{t\in F}$ gives $t=s$, and then
$x\in O_k\cap O_\ell$. Since the closures of the $O_j$'s are
pairwise disjoint, $k=\ell$. Therefore
\begin{equation}\label{eq:localizer-first-anchor}
 (b_{t,k}(sx))_{(t,k)\in F\times\Lambda}=e_{(s,\ell)}.
\end{equation}
Here $e_{(s,\ell)}\in \mathbb R^{F\times\Lambda}$ denotes the
standard coordinate vector indexed by $(s,\ell)$.

Suppose first that
$
 b_{t,k}(sy)=0
 $ $s,t\in F,\ k\in\Lambda.$
Then $A_s(x,y)=e_{(s,\ell)}$ for every $s\in F$, so these rows are
linearly independent. Suppose instead that at least one of the displayed
coefficients is nonzero.  {Choose $s_0,t_0\in F$ and $k\in\Lambda$ such that
$b_{t_0,k}(s_0y)>0$, and    {denote}
\begin{equation}\label{eq:v}
\begin{gathered}
     h:=s_0^{-1}t_0\in F^{-1}F,\qquad  v:=h^{-1}y\in O_k,
 \qquad
 \beta:=\rho_k(v)\in(0,1].
\end{gathered}
\end{equation}
Here $\rho_k(v)=b_{t_0,k}(s_0y)>0$ and $s_0y=t_0v$.} Thus $y=hv$. If $b_{t',k'}(s'y)\ne0$, then
$s'y=t'v'$ for some $v'\in O_{k'}$. By
Lemma~\ref{lem:anchor-synchronization} and
\eqref{eq:localizer-marker-separation},
\begin{equation}\label{eq:827}
    v'=v, \qquad s'^{-1}t'=h, \qquad \text{and} \qquad k'=k.
\end{equation}
Thus the only possible nonzero anchor coefficient at $sy$ is the
coordinate $(sh,k)$; it occurs exactly when $sh\in F$, and its value
is $\beta$.  {Indeed, \eqref{eq:827} applies to every nonzero coefficient,
so its indices must satisfy $t=sh$ and $k'=k$. When $sh\in F$,
\[
 b_{sh,k}(sy)=\rho_k\bigl((sh)^{-1}sy\bigr)
 =\rho_k(h^{-1}y)=\rho_k(v)=\beta.
\]}
Hence
\begin{equation}\label{eq:localizer-anchor-row}
 A_s(x,y)
 =e_{(s,\ell)}-
 \begin{cases}
  \beta e_{(sh,k)},&sh\in F,\\
  0,&sh\notin F.
 \end{cases}
\end{equation}

We now encode, with exact notation, the finitely many discrete records
that can be produced by pairs
$(x,y)\in K_\ell\times(X\setminus O_\ell)$. Let
$\mathscr E_H:=H\times\{+,-\}$.
For such a pair define, for $u\in H$,
\[
 \xi^{x,y}_{u,+}:=\boldsymbol{\psi}(ux),
 \qquad
 \xi^{x,y}_{u,-}:=\boldsymbol{\psi}(uy),
\]
and
\[
 I^{x,y}_{u,+}:=\rLocD{}(ux),
 \qquad
 I^{x,y}_{u,-}:=\rLocD{}(uy).
\]
Define an equivalence relation $\sim_{x,y}$ on $\mathscr E_H$ by
\[
 (u,\sigma)\sim_{x,y}(u',\sigma')
 \quad\Longleftrightarrow\quad
 \xi^{x,y}_{u,\sigma}=\xi^{x,y}_{u',\sigma'}.
\]
The pair also determines a discrete anchor type $\tau_{\ell,x,y}$, which
is the symbol $\emptyset$ if all the coefficients
$b_{t,k}(sy)$, $s,t\in F$, $k\in\Lambda$, vanish; otherwise \eqref{eq:827} gives a unique pair
$(k,h)\in\Lambda\times F^{-1}F$, and we set
$\tau_{\ell,x,y}:=(k,h)$. In the latter case
$(k,h)\ne(\ell,e)$.
 {Indeed, if $k=\ell$ and $h=e$, then $y=v\in O_\ell$,
contrary to the choice of $y$. Thus, in the subcase $k=\ell$, this
condition is simply $h\ne e$.
We call $\tau_{\ell,x,y}=\emptyset$ the \textbf{no-hit case} and
$\tau_{\ell,x,y}=(k,h)$ the \textbf{hit case}.
Geometrically, $b_{t,k}(sy)>0$ exactly when
$sy\in t\{z\in X:\rho_k(z)>0\}$.} 

Define
\[
 \omega(\ell,x,y)
 :=
 \left(
  \ell,
  (I^{x,y}_{u,\sigma})_{(u,\sigma)\in\mathscr E_H},
  \sim_{x,y},
  \tau_{\ell,x,y}
 \right)
\]
and let
\[
 \Omega
 :=
 \left\{
  \omega(\ell,x,y):
  \ell\in\Lambda,\ x\in K_\ell,\ y\in X\setminus O_\ell
 \right\}.
\]
The set $\Omega$ is finite. Indeed, $\ell$ ranges over the finite set
$\Lambda$; each $I^{x,y}_{u,\sigma}$ is one of the finitely many
nonempty subsets of $\{1,\ldots,J\}$; there are only finitely many
equivalence relations on the finite set $\mathscr E_H$; and the anchor
type belongs to the finite set
$
 \{\emptyset\}
 \cup
 \bigl(\Lambda\times F^{-1}F\bigr).
$

Fix $\omega\in\Omega$. Write its components as
\[
 \omega
 =
 \left(
  \ell_\omega,
  (I^\omega_{u,\sigma})_{(u,\sigma)\in\mathscr E_H},
  \sim_\omega,
  \tau_\omega
 \right),
\]
and let
$C_\omega(u,\sigma):=[(u,\sigma)]_{\sim_\omega}$
denote the corresponding equivalence class. Since $\omega$ is produced
by an actual pair, for all $s\in F$
\begin{equation}\label{eq:localizer-admissible-pattern}
\begin{split}
   &  \sum_{c\in A}(|I^\omega_{cs,+}|-1)=\sum_{c\in A}\rOrdD{}(cs x)<a|A|,
\\
 &\sum_{c\in A}(|I^\omega_{cs,-}|-1)=\sum_{c\in A}\rOrdD{}(cs y)<a|A|.
\end{split}
\end{equation}

Define the set of non-loop sites of $\omega$ by
\[
 \mathscr N_\omega
 :=
 \{u\in H:C_\omega(u,+)\ne C_\omega(u,-)\}.
\]
Let
\[
 \mathscr C_\omega
 :=
 \{C_\omega(u,\sigma):u\in\mathscr N_\omega,\ \sigma\in\{+,-\}\}
\]
be the set of equivalence classes that occur at a non-loop site, and    {denote}
\[
 \mathscr C_\omega^\circ
 :=
 \{C_\omega(u,\sigma):u\in\mathscr N_\omega\cap H^\circ,
                          \ \sigma\in\{+,-\}\}.
\]
For $C\in\mathscr C_\omega$, define
\begin{equation}\label{eq:localizer-class-carrier}
 I_\omega(C)
 :=
 \bigcap_{(u,\sigma)\in C}I^\omega_{u,\sigma},
 \qquad
 d_\omega(C):=|I_\omega(C)|-1.
\end{equation}
The set $I_\omega(C)$ is nonempty. Indeed, choose a pair $(x,y)$
producing $\omega$, and let $\xi_C^{x,y}$ be the common value of
$\xi^{x,y}_{u,\sigma}$ for $(u,\sigma)\in C$.  {Since
$\xi_C^{x,y}\in\Delta^{J-1}$, choose $j$ with $(\xi_C^{x,y})_j>0$.
For every $(u,\sigma)\in C$, membership in
$\Delta_{I^\omega_{u,\sigma}}$ implies $j\in I^\omega_{u,\sigma}$.
Hence $j\in I_\omega(C)$, proving nonemptiness.} Then
\[
 \xi_C^{x,y}
 \in
 \bigcap_{(u,\sigma)\in C}\Delta_{I^\omega_{u,\sigma}}
 =\Delta_{I_\omega(C)}.
\]
Consequently,
\begin{equation}\label{eq:localizer-class-dimension}
 d_\omega(C)=\dim\Delta_{I_\omega(C)}\ge0.
\end{equation}

Define the matrix $B_\omega^\circ$, with rows indexed by $s\in F$
and columns indexed by $(c,C)\in A\times\mathscr C_\omega^\circ$, by
\begin{equation}\label{eq:localizer-core-incidence-matrix}
 (B_\omega^\circ)_{s,(c,C)}
 :=
 \1_{\{C_\omega(cs,+)=C\}}
 -
 \1_{\{C_\omega(cs,-)=C\}},
\end{equation}
and set
$r_\omega^\circ:=\rank B_\omega^\circ$.

 {As in the proof of Lemma~\ref{lem:state-rank-dimension}, every
column of $B_\omega^\circ$ is nonzero. Since $A\ne\emptyset$,
\[
 r_\omega^\circ=0
 \quad\Longleftrightarrow\quad
 \mathscr C_\omega^\circ=\emptyset.
\]}
Lemma~\ref{lem:state-rank-dimension}, together with
\eqref{eq:localizer-admissible-pattern}, gives
\begin{equation}\label{eq:localizer-state-dimension}
 \sum_{C\in\mathscr C_\omega}d_\omega(C)
 \le
 2ar_\omega^\circ+2(J-1)b_F.
\end{equation}
All quantities in this inequality depend only on $\omega$, not on the
chosen pair producing it.

 {We now define a semialgebraic parameter space for the state values
associated with the fixed record $\omega$.} Let
\begin{equation}\label{eq:localizer-state-space}
 \mathcal S_\omega
 :=
 \left\{
  (z_C)_{C\in\mathscr C_\omega}
  \in
  \prod_{C\in\mathscr C_\omega}\Delta_{I_\omega(C)}:
  z_C\ne z_{C'}\ \text{whenever }C\ne C'
 \right\}.
\end{equation}
If $\mathscr C_\omega=\emptyset$, the product in
\eqref{eq:localizer-state-space} is understood to be a singleton.

 The
space $\mathcal S_\omega$ is semialgebraic. Indeed, every
$\Delta_{I_\omega(C)}$ is cut out by linear equalities and
inequalities, while $z_C\ne z_{C'}$ is equivalent to
$
 \sum_{i=1}^J(z_{C,i}-z_{C',i})^2>0.
$
Each coordinate $z_C$ belongs to $\Delta_{I_\omega(C)}$, whose
dimension is $d_\omega(C)$. Since
$\mathcal S_\omega\subseteq
\prod_{C\in\mathscr C_\omega}\Delta_{I_\omega(C)}$, it follows that (using \eqref{eq:localizer-state-dimension})
\begin{equation}\label{eq:localizer-state-space-dimension}
 \dim\mathcal S_\omega
 \le
 \sum_{C\in\mathscr C_\omega}d_\omega(C)\le 2ar_\omega^\circ+2(J-1)b_F.
\end{equation}

Define
\[
 J_\omega
 :=
 \begin{cases}
  \{0\},&\tau_\omega=\emptyset,\\
  (0,1],&\tau_\omega=(k,h)\text{ for some }(k,h),
 \end{cases}
\]
and set
\begin{equation}\label{eq:localizer-enlargement}
 Z_\omega:=\mathcal S_\omega\times J_\omega.
\end{equation}
A point of $Z_\omega$ will be denoted by
\[
 \zeta=(\mathbf z,\beta),
 \qquad
 \mathbf z=(z_C)_{C\in\mathscr C_\omega}.
\]
By 
\eqref{eq:localizer-state-space-dimension},
\begin{equation}\label{eq:localizer-pattern-dimension}
 \dim Z_\omega
 \le
 2ar_\omega^\circ+2(J-1)b_F+\dim J_\omega
 \le
 2ar_\omega^\circ+2(J-1)b_F+1.
\end{equation}

Define the affine map
\[
 \widehat f_*:\Delta^{J-1}\longrightarrow\mathbb R^m,
 \qquad
 \widehat f_*(z):=\sum_{i=1}^Jv_i z_i.
\]
Then $f_*=\widehat f_*\circ\boldsymbol{\psi}$.
For $\zeta=(\mathbf z,\beta)\in Z_\omega$, define the anchor matrix
$A_\omega(\beta)$, with rows indexed by $s\in F$ and columns indexed
by $(t,\nu)\in F\times\Lambda$, as follows. If
$\tau_\omega=\emptyset$, set
\begin{equation}\label{eq:localizer-abstract-anchor-no-hit}
 A_\omega(0)_{s,(t,\nu)}
 :=\1_{\{(t,\nu)=(s,\ell_\omega)\}}.
\end{equation}
If $\tau_\omega=(k_\omega,h_\omega)$, set
\begin{equation}\label{eq:localizer-abstract-anchor-hit}
 A_\omega(\beta)_{s,(t,\nu)}
 :=
 \1_{\{(t,\nu)=(s,\ell_\omega)\}}
 -
 \beta\,
 \1_{\{sh_\omega\in F\}}
 \1_{\{(t,\nu)=(sh_\omega,k_\omega)\}}.
\end{equation}
Define the Veronese matrix $V_\omega(\mathbf z)$, with rows indexed by
$s\in F$ and columns indexed by
$(c,\alpha)\in A\times\mathcal A_{J,d_V}$, by
\begin{equation}\label{eq:localizer-abstract-veronese-matrix}
 V_\omega(\mathbf z)_{s,(c,\alpha)}
 :=
 \sum_{C\in\mathscr C_\omega}
 \left(
  \1_{\{C_\omega(cs,+)=C\}}
  -
  \1_{\{C_\omega(cs,-)=C\}}
 \right)z_C^\alpha.
\end{equation}
Finally, define
\begin{equation}\label{eq:localizer-abstract-full-matrix}
 M_\omega(\zeta)
 :=
 \bigl[A_\omega(\beta)\mid V_\omega(\mathbf z)\bigr]
 \in\operatorname{Mat}_{|F|\times P}(\mathbb R),
\end{equation}
and, for $1\le j\le m$, define
$c_{\omega,j}:Z_\omega\to\mathbb R^F$ by
\begin{equation}\label{eq:localizer-abstract-right-hand-side}
 c_{\omega,j}(\zeta)_s
 :=
 \sum_{C\in\mathscr C_\omega}
 \left(
  \1_{\{C_\omega(s,-)=C\}}
  -
  \1_{\{C_\omega(s,+)=C\}}
 \right)
 \widehat f_*(z_C)_j.
\end{equation}
 {Classes occurring only at loop sites are absent from
$\mathscr C_\omega$ by definition. Moreover,} if
$C_\omega(u,+)=C_\omega(u,-)$, the two indicators in
\eqref{eq:localizer-abstract-veronese-matrix} and
\eqref{eq:localizer-abstract-right-hand-side} cancel.

The maps $M_\omega$ and $c_{\omega,j}$ are semialgebraic. Indeed,
\eqref{eq:localizer-abstract-anchor-no-hit} is constant,
\eqref{eq:localizer-abstract-anchor-hit} is affine in $\beta$,
\eqref{eq:localizer-abstract-veronese-matrix} is polynomial in the
coordinates of the $z_C$'s, and
\eqref{eq:localizer-abstract-right-hand-side} is affine in those
coordinates because $\widehat f_*$ is affine.

To relate these formulas to an actual pair, let
$(x,y)\in K_{\ell_\omega}\times(X\setminus O_{\ell_\omega})$ produce
$\omega$. For each $C\in\mathscr C_\omega$, let
$z_C^{x,y}$ be the common value of $\xi^{x,y}_{u,\sigma}$ for
$(u,\sigma)\in C$. These values are pairwise distinct and belong to
$\Delta_{I_\omega(C)}$. In the no-hit case   {denote} $\beta^{x,y}:=0$; in
a hit case let $\beta^{x,y}=\rho_{k_\omega}(v)$ be defined as in \eqref{eq:v}. Denote
\[
 \zeta^{x,y}
 :=
 \bigl((z_C^{x,y})_{C\in\mathscr C_\omega},\beta^{x,y}\bigr)
 \in Z_\omega.
\]
 {For $s\in F$, $c\in A$, and $\alpha\in\mathcal A_{J,d_V}$,
the defining sum for the Veronese block gives
\[
 V_\omega(\mathbf z^{x,y})_{s,(c,\alpha)}
 =\boldsymbol{\psi}(csx)^\alpha-\boldsymbol{\psi}(csy)^\alpha.
\]
Indeed, if the endpoint classes at $cs$ differ, the two indicators
select their respective values; if they agree, both sides vanish.
Likewise, the anchor formulas and the reversed order of the
indicators in the right-hand side give
\[
 A_\omega(\beta^{x,y})_{s,(t,\nu)}
 =b_{t,\nu}(sx)-b_{t,\nu}(sy),\qquad
 c_{\omega,j}(\zeta^{x,y})_s
 =f_*(sy)_j-f_*(sx)_j.
\]
Here $\mathbf z^{x,y}=(z_C^{x,y})_{C\in\mathscr C_\omega}$, and the
coordinate labeled $(c,\alpha)$ in $\Xi(sx)$ is
$\boldsymbol{\psi}(csx)^\alpha$. Hence the two blocks recover the coefficients of
the scalar collision equations above.}

By \eqref{eq:localizer-first-anchor},
\eqref{eq:localizer-anchor-row}, and the definitions of the endpoint
classes, the row of $M_\omega(\zeta^{x,y})$ indexed by $s$ is exactly
$\Xi(sx)-\Xi(sy)$,
while
$c_{\omega,j}(\zeta^{x,y})_s =f_*(sy)_j-f_*(sx)_j$.
Consequently, the collision equations
\begin{equation}\label{eq:localizer-collision}
 q_\Theta(sx)=q_\Theta(sy)
 \qquad(s\in F)
\end{equation}
are equivalent, in the $j$-th output coordinate, to
\begin{equation}\label{eq:localizer-linear-system}
 M_\omega(\zeta^{x,y})\theta^{(j)}
 =c_{\omega,j}(\zeta^{x,y}),
 \qquad 1\le j\le m.
\end{equation}

  {To apply Lemma~\ref{lem:semialgebraic-incidence-avoidance}, we
need the parameter-space dimension to be smaller than $m$ times
the rank of the collision matrix. We establish two lower bounds
for that rank.} From
\eqref{eq:localizer-abstract-anchor-no-hit}--
\eqref{eq:localizer-abstract-anchor-hit}, the anchor rows have exactly the
forms already analyzed in \eqref{eq:localizer-anchor-row}. If
$\tau_\omega=\emptyset$, they are the standard basis rows. If
$\tau_\omega=(k_\omega,h_\omega)$ and
$k_\omega\ne\ell_\omega$,   {retaining only the columns $(t,\ell_\omega)$, $t\in F$, gives
\[
 \bigl(A_\omega(\beta)_{s,(t,\ell_\omega)}\bigr)_{s,t\in F}
 =\bigl(\mathbf1_{\{s=t\}}\bigr)_{s,t\in F}=I_{|F|}.
\]
Indeed, none of the negative terms occurs in these columns. Thus
these rows are linearly independent.} If
$k_\omega=\ell_\omega$, then $h_\omega\ne e$, and
 {Lemma~\ref{lem:anchor-rank}, with $e_t$ identified with
$e_{(t,\ell_\omega)}$, gives rank at least $|F|/2$ for every}
$\beta\in(0,1]$. Hence, for every $\zeta\in Z_\omega$,
\begin{equation}\label{eq:localizer-full-anchor-rank}
 \rank A_\omega(\beta)\ge\frac{|F|}{2},
 \qquad
 \rank M_\omega(\zeta)\ge\frac{|F|}{2}.
\end{equation}

Since the rank of $M_\omega$ may vary, define its \textbf{rank strata}
by
\begin{equation}\label{eq:localizer-rank-stratum}
 Z_{\omega,r}:=\{\zeta\in Z_\omega:\rank M_\omega(\zeta)=r\},
 \qquad 0\le r\le |F|.
\end{equation}
The rank strata are semialgebraic by the minor conditions for rank, and form a
finite partition of $Z_\omega$ after empty pieces are omitted.
Since $\Omega$ is finite, there are at most $|\Omega|(|F|+1)$
nonempty strata in total. On each stratum we must prove
$\dim Z_{\omega,r}<mr$. The anchor estimate gives $r\ge |F|/2$;
we also need $r\ge r_\omega^\circ$ to control the term
$2ar_\omega^\circ$ in \eqref{eq:localizer-pattern-dimension}.
We obtain this second bound from the Veronese block.

Fix $\mathbf z\in\mathcal S_\omega$, and suppose that
$\lambda=(\lambda_s)_{s\in F}\in\mathbb R^F$  satisfies
$\lambda^{\mathsf T}V_\omega(\mathbf z)=0$. For every $c\in A$,
this gives the vector identity
\[
 \sum_{C\in\mathscr C_\omega}
 \left(
  \sum_{s\in F}\lambda_s
  \bigl(
   \1_{\{C_\omega(cs,+)=C\}}
   -
   \1_{\{C_\omega(cs,-)=C\}}
  \bigr)
 \right)
 \Phi_{d_V}(z_C)=0.
\]
There are at most $2|H|$ classes in $\mathscr C_\omega$, the points
$(z_C)_{C\in\mathscr C_\omega}$ are pairwise distinct by
\eqref{eq:localizer-state-space}, and $d_V\ge2|H|-1$. Therefore
Lemma~\ref{lem:finite-veronese} implies that the vectors
$\Phi_{d_V}(z_C)$, $C\in\mathscr C_\omega$, are linearly independent.
It follows  that
\[
 \sum_{s\in F}\lambda_s
 \bigl(
  \1_{\{C_\omega(cs,+)=C\}}
  -
  \1_{\{C_\omega(cs,-)=C\}}
 \bigr)
 =0
\]
for every $c\in A$ and every $C\in\mathscr C_\omega$. In particular,
using \eqref{eq:localizer-core-incidence-matrix},
$\lambda^{\mathsf T}B_\omega^\circ=0$.
Thus
\[
 \ker\bigl(V_\omega(\mathbf z)^{\mathsf T}\bigr)
 \subset
 \ker\bigl((B_\omega^\circ)^{\mathsf T}\bigr).
\]
For a $p\times q$ matrix $C$, the rank--nullity formula gives
$\rank C=p-\dim\ker C^{\mathsf T}$. Both matrices above have
$|F|$ rows, so
\begin{equation}\label{eq:localizer-veronese-rank}
\begin{aligned}
 \rank V_\omega(\mathbf z)
 &{}=|F|-\dim\ker V_\omega(\mathbf z)^{\mathsf T}\\
 &{}\ge |F|-\dim\ker(B_\omega^\circ)^{\mathsf T}\\
 &{}=\rank B_\omega^\circ=r_\omega^\circ.
\end{aligned}
\end{equation}
Since $M_\omega(\zeta)$ contains both the anchor and Veronese column
blocks, if
$
 r:=\rank M_\omega(\zeta),
$
then \eqref{eq:localizer-full-anchor-rank} and
\eqref{eq:localizer-veronese-rank} give
\begin{equation}\label{eq:localizer-two-ranks}
 r\ge\max\left\{r_\omega^\circ,\frac{|F|}{2}\right\}.
\end{equation}

 {Fix a nonempty rank stratum $Z_{\omega,r}$. By
\eqref{eq:localizer-two-ranks}, $r\ge r_\omega^\circ$ and
$r\ge |F|/2$. The first inequality controls the main term in
\eqref{eq:localizer-pattern-dimension}; the second and
\eqref{eq:localizer-boundary-budget} give
\[
 2ar_\omega^\circ\le2ar,
 \qquad
 2(J-1)b_F+1<\frac{m-2a}{2}|F|\le(m-2a)r.
\]
Here $a>0$ follows from \eqref{eq:localizer-A-order}, since the
face dimensions are nonnegative, and $m-2a>0$. Consequently,
\begin{equation}\label{eq:localizer-dimension-rank-gap}
\begin{aligned}
 \dim Z_{\omega,r}
 &{}\le 2ar_\omega^\circ+2(J-1)b_F+1\\
 &{}<2ar+(m-2a)r=mr.
\end{aligned}
\end{equation}}

For $\omega\in\Omega$ and every nonempty rank stratum
$Z_{\omega,r}$, define
\begin{equation}\label{eq:localizer-exceptional-set}
 \begin{aligned}
 \mathcal B_{\omega,r}
 :=\bigl\{\Theta\in(\mathbb R^P)^m: &\ \exists
 \zeta\in Z_{\omega,r}\text{ such that }\\
 &M_\omega(\zeta)\theta^{(j)}=c_{\omega,j}(\zeta)~
 \forall 1\le j\le m\bigr\}.
 \end{aligned}
\end{equation}
By Lemma~\ref{lem:semialgebraic-incidence-avoidance} and
\eqref{eq:localizer-dimension-rank-gap}, each
$\mathcal B_{\omega,r}$ is semialgebraic and has nowhere dense closure
in $(\mathbb R^P)^m$. Since $\Omega$ is finite and each
$M_\omega$ has only finitely many possible ranks, the set
\begin{equation}\label{eq:localizer-total-exceptional-set}
 \mathcal B
 :=
 \bigcup_{\omega\in\Omega}
 \bigcup_{\substack{0\le r\le|F|\\Z_{\omega,r}\ne\emptyset}}
 \mathcal B_{\omega,r}\text{ has nowhere dense closure.}
\end{equation}

Suppose that  there
are $\ell\in\Lambda$, $x\in K_\ell$, and
$y\in X\setminus O_\ell$ such that
$q_\Theta^F(x)=q_\Theta^F(y)$. The pair determines the record
$\omega=\omega(\ell,x,y)\in\Omega$ and the point
$\zeta^{x,y}\in Z_\omega$ constructed above. Let
$r=\rank M_\omega(\zeta^{x,y})$. By
\eqref{eq:localizer-linear-system},
$\Theta\in\mathcal B_{\omega,r}\subset\mathcal B$.
Consequently, if $\Theta\notin\mathcal B$, then no such actual
collision occurs, and hence
\[
 q_\Theta^F(K_\ell)\cap q_\Theta^F(X\setminus O_\ell)=\emptyset
 \qquad(\ell\in\Lambda).
\]
Since $\overline{\mathcal B}$ has empty interior, every neighbourhood of
the origin in $(\mathbb R^P)^m$ contains a parameter outside
$\mathcal B$.

Finally, $f_*(X)$ is a compact subset of $(0,1)^m$, and $\Xi(X)$
is bounded. Hence $q_\Theta\to f_*$ uniformly as $\Theta\to0$. We
may therefore choose $\Theta\notin\mathcal B$ sufficiently close to the
origin that
\begin{equation}\label{eq:localizer-final-choice}
 \|q_\Theta-f_*\|_\infty<\varepsilon
 \qquad\text{and}\qquad
 q_\Theta(X)\subset[0,1]^m.    
\end{equation}
Set $q:=q_\Theta$. Since $e\in A$, the affine term $f_*(z)$
is determined by $\boldsymbol{\psi}^{A}(z)$. The formula
\eqref{eq:localizer-parametric-map} therefore defines a continuous map
\[
 Q:(\Delta^{J-1})^A\times[0,1]^{F^{-1}\times\Lambda}
 \longrightarrow\mathbb R^m
\]
such that
\[
 q(z)=Q\bigl(
   \boldsymbol{\psi}^{A}(z),
   \boldsymbol{\rho}^{F^{-1}}(z)
 \bigr).
\]
Together with \eqref{eq:localizer-bump-functions}, this also proves
the assertions of Remark~\ref{rem:prepared-localizer}.
\end{proof}

\bibliographystyle{alpha}
\bibliography{ref}

\end{document}